\documentclass[a4paper,11pt]{amsart}
\usepackage{mathrsfs}
\usepackage{cases}
\usepackage{multirow}
\usepackage{color}
\usepackage{colordvi}
\usepackage{amsfonts}

\usepackage{graphicx}
\usepackage{amsmath}
\usepackage{amssymb}
\usepackage[all]{xypic}
\usepackage[all]{xy}

\oddsidemargin 0cm \evensidemargin -0cm \textwidth 16cm

\begin{document}
\input xy
\xyoption{all}

\renewcommand{\mod}{\operatorname{mod}\nolimits}
\newcommand{\proj}{\operatorname{proj}\nolimits}
\newcommand{\rad}{\operatorname{rad}\nolimits}
\newcommand{\Gproj}{\operatorname{Gproj}\nolimits}
\newcommand{\Ginj}{\operatorname{Ginj}\nolimits}
\newcommand{\Gd}{\operatorname{Gd}\nolimits}
\newcommand{\soc}{\operatorname{soc}\nolimits}
\newcommand{\ind}{\operatorname{inj.dim}\nolimits}
\newcommand{\Top}{\operatorname{top}\nolimits}
\newcommand{\ann}{\operatorname{Ann}\nolimits}
\newcommand{\id}{\operatorname{id}\nolimits}
\newcommand{\Id}{\operatorname{id}\nolimits}
\newcommand{\irr}{\operatorname{irr}\nolimits}
\newcommand{\Mod}{\operatorname{Mod}\nolimits}
\newcommand{\End}{\operatorname{End}\nolimits}
\newcommand{\Ob}{\operatorname{Ob}\nolimits}
\newcommand{\Aus}{\operatorname{Aus}\nolimits}
\newcommand{\ver}{\operatorname{v}\nolimits}
\newcommand{\arr}{\operatorname{a}\nolimits}
\newcommand{\cone}{\operatorname{cone}\nolimits}
\newcommand{\rep}{\operatorname{rep}\nolimits}
\newcommand{\Ext}{\operatorname{Ext}\nolimits}
\newcommand{\Hom}{\operatorname{Hom}\nolimits}
\newcommand{\RHom}{\operatorname{RHom}\nolimits}
\renewcommand{\Im}{\operatorname{Im}\nolimits}
\newcommand{\Ker}{\operatorname{Ker}\nolimits}
\newcommand{\Coker}{\operatorname{Coker}\nolimits}
\renewcommand{\dim}{\operatorname{dim}\nolimits}
\newcommand{\Ab}{{\operatorname{Ab}\nolimits}}
\newcommand{\Coim}{{\operatorname{Coim}\nolimits}}
\newcommand{\pd}{\operatorname{proj.dim}\nolimits}
\newcommand{\Ind}{\operatorname{Ind}\nolimits}
\newcommand{\add}{\operatorname{add}\nolimits}
\newcommand{\pr}{\operatorname{pr}\nolimits}
\newcommand{\Tr}{\operatorname{Tr}\nolimits}
\newcommand{\Def}{\operatorname{Def}\nolimits}
\newcommand{\Gp}{\operatorname{Gproj}\nolimits}

\newcommand{\ca}{{\mathcal A}}
\newcommand{\cb}{{\mathcal B}}
\newcommand{\cc}{{\mathcal C}}
\newcommand{\cd}{{\mathcal D}}
\newcommand{\cg}{{\mathcal G}}
\newcommand{\cp}{{\mathcal P}}
\newcommand{\ce}{{\mathcal E}}
\newcommand{\cs}{{\mathcal S}}
\newcommand{\cm}{{\mathcal M}}
\newcommand{\cn}{{\mathcal N}}
\newcommand{\cx}{{\mathcal X}}
\newcommand{\ct}{{\mathcal T}}
\newcommand{\cu}{{\mathcal U}}
\newcommand{\co}{{\mathcal O}}
\newcommand{\cv}{{\mathcal V}}
\newcommand{\calr}{{\mathcal R}}
\newcommand{\ol}{\overline}
\newcommand{\ul}{\underline}
\newcommand{\st}{[1]}
\newcommand{\ow}{\widetilde}
\newcommand{\coh}{{\mathrm coh}}
\newcommand{\CM}{{\mathrm CM}}
\newcommand{\vect}{{\mathrm vect}}

\newcommand{\bp}{{\mathbf p}}
\newcommand{\bL}{{\mathbf L}}
\newcommand{\bS}{{\mathbf S}}

\newtheorem{theorem}{Theorem}[section]
\newtheorem{acknowledgement}[theorem]{Acknowledgement}
\newtheorem{algorithm}[theorem]{Algorithm}
\newtheorem{axiom}[theorem]{Axiom}
\newtheorem{case}[theorem]{Case}
\newtheorem{claim}[theorem]{Claim}
\newtheorem{conclusion}[theorem]{Conclusion}
\newtheorem{condition}[theorem]{Condition}
\newtheorem{conjecture}[theorem]{Conjecture}
\newtheorem{construction}[theorem]{Construction}
\newtheorem{corollary}[theorem]{Corollary}
\newtheorem{criterion}[theorem]{Criterion}
\newtheorem{definition}[theorem]{Definition}
\newtheorem{example}[theorem]{Example}
\newtheorem{exercise}[theorem]{Exercise}
\newtheorem{lemma}[theorem]{Lemma}
\newtheorem{notation}[theorem]{Notation}
\newtheorem{problem}[theorem]{Problem}
\newtheorem{proposition}[theorem]{Proposition}
\newtheorem{remark}[theorem]{Remark}
\newtheorem{solution}[theorem]{Solution}
\newtheorem{summary}[theorem]{Summary}
\newtheorem*{thm}{Theorem}
\newtheorem*{thma}{Theorem A}
\newtheorem*{thmb}{Theorem B}
\newtheorem*{thmc}{Theorem C}
\newtheorem*{thm1}{Main Theorem 1}
\newtheorem*{thm2}{Main Theorem 2}

\def \A{{\Bbb A}}
\def \Z{{\Bbb Z}}
\def \X{{\Bbb X}}
\renewcommand{\L}{{\Bbb L}}
\renewcommand{\P}{{\Bbb P}}
\newcommand {\lu}[1]{\textcolor{red}{$\clubsuit$: #1}}
\title[Gorenstein properties of simple gluing algebras]{Gorenstein properties of simple gluing algebras}

\author[Lu]{Ming Lu}
\address{Department of Mathematics, Sichuan University, Chengdu 610064, P.R.China}
\email{luming@scu.edu.cn}

\subjclass[2000]{18E30, 18E35}
\keywords{Gorenstein projective module, Singularity category, Gorenstein defect category, Simple gluing algebra}

\begin{abstract}
Let $A=KQ_A/I_A$ and $B=KQ_B/I_B$ be two finite-dimensional bound quiver algebras, fix two vertices $a\in Q_A$ and $b\in Q_B$.
We define an algebra $\Lambda=KQ_\Lambda/I_\Lambda$, which is called a simple gluing algebra of $A$ and $B$, where $Q_\Lambda$ is from $Q_A$ and $Q_B$ by identifying $a$ and $b$, $I_\Lambda=\langle I_A,I_B\rangle$. We prove that $\Lambda$ is Gorenstein if and only if $A$ and $B$ are Gorenstein, and describe the Gorenstein projective modules, singularity category, Gorenstein defect category and also Cohen-Macaulay Auslander algebra of $\Lambda$ from the corresponding ones of
$A$ and $B$.
\end{abstract}

\maketitle

\section{Introduction}

In the study of B-branes on Landau-Ginzburg models in the framework of Homological Mirror Symmetry Conjecture, D. Orlov rediscovered the notion of singularity categories \cite{Or1,Or2,Or3}. The singularity category of an algebra $A$ is defined to be the Verdier quotient of the bounded derived category with respect to the thick subcategory formed by complexes isomorphic to those consisting of finitely generated projective modules, \cite{Bu}. It measures the homological singularity of an algebra in the sense that an algebra $A$ has finite global dimension if and only if its singularity category vanishes.

The singularity category captures the stable homological features of an algebra \cite{Bu}. A fundamental result of R. Buchweitz \cite{Bu} and D. Happel \cite{Ha1} states that for a Gorenstein algebra $A$, the singularity category is triangle equivalent to the stable category of Gorenstein projective (also called (maximal) Cohen-Macaulay) $A$-modules. Buchweitz's Theorem (\cite[Theorem 4.4.1]{Bu}) says that there is an exact embedding $\Phi:\underline{\Gp}A\rightarrow D_{sg}(A)$ given by $\Phi(M)=M$, where the second $M$ is the corresponding stalk complex at degree $0$, and $\Phi$ is an equivalence if and only if $A$ is Gorenstein. Recently, to provide a categorical characterization of Gorenstein algebras, P. A. Bergh, D. A. J{\o}rgensen and S. Oppermann \cite{BJO} defined the Gorenstein defect category $D_{def}(A):=D_{sg}(A)/\Im \Phi$ and proved that $A$ is Gorenstein if and only if $D_{def}(A)=0$. In general, it is difficult to describe the singularity categories and Gorenstein defect categories. Many people are trying to describe these categories for some special kinds of algebras, see e.g. \cite{Chen1,chen2,chen3,Ka,CGLu,CDZ}. In particular, for a CM-finite algebra $A$, F. Kong and P. Zhang \cite{KZ} proved that its Gorenstein defect category is equivalent to the singularity category of its Cohen-Macaulay Auslander algebra. Recently, X-W. Chen \cite{chen3} described the singularity category and Gorenstein defect category for a quadratic monomial algebra.
For a triangular matrix algebra $\Lambda=\left( \begin{array}{cc} A&M\\0&B \end{array}\right)$, the relations between these categories of $\Lambda$ and the corresponding ones of $A$ and $B$ are decribed clearly in some sense \cite{Chen1,KZ,XZ,Zhang2,Lu}.

Inspired by the above, we define a new algebra (called \emph{simple gluing algebra}) from some algebras. Explicitly, let $A=KQ_A/I_A$, $B=KQ_B/I_B$ be two finite-dimensional algebras. For any two vertices $a\in Q_A$, $b\in Q_B$, we define a new quiver $Q$ from $Q_A$ and $Q_B$ by identifying $a$ and $b$. In this way, we can view $Q_A$ and $Q_B$ as subquivers of $Q$. We call $Q$ the \emph{simple gluing quiver} of $Q_A$ and $Q_B$. Denote by $v\in Q$ the \emph{glued vertex}. Let $I$ be the ideal of $KQ$ generated by $I_A$ and $I_B$. Then $\Lambda=KQ/I$ is called a simple gluing algebra of $A$ and $B$ if $\Lambda$ is finite-dimensional.
Similar to the triangular matrix algebras, first we prove that $\Lambda$ is Gorenstein if and only if $A$ and $B$ are Gorenstein, see Proposition \ref{lemma simple gluing Nakayama algebra Gorenstein}; second we prove that $D_{sg}(\Lambda)\simeq D_{sg}(A) \coprod D_{sg}(B)$ (see Theorem \ref{theorem singularity categories}), $\underline{\Gp}(\Lambda)\simeq \underline{\Gp}(A) \coprod \underline{\Gp}(B)$ (see Theorem \ref{theorem stable category of Cm modules}) and $D_{def}(\Lambda)\simeq D_{def}(A) \coprod D_{def}(B)$ (see Corollary \ref{corollary Gorenstein defect categories}). In particular, if we know the Gorenstein projective modules over $A$ and $B$, then we can get all the Gorenstein projective modules over $\Lambda$. Finally, we
prove that the Cohen-Macaulay Auslander algebra of $\Lambda$ is a simple gluing algebra of the Cohen-Macaulay Auslander algebras of $A$ and $B$, see Theorem \ref{theorem Cohen-Macaulay Auslander algebras}. As applications, we redescribe the singularity categories for cluster-tilted algebras of type $\A$ and endomorphism algebras of maximal rigid objects of cluster tube $\cc_n$.

\vspace{0.2cm} \noindent{\bf Acknowledgments.}
The work was done during the stay of the author at the Department of Mathematics,
University of Bielefeld. He is deeply indebted to Professor Henning Krause for his kind
hospitality, inspiration and continuous encouragement.
The author thanks Professor Liangang Peng very much for his guidance and constant support. The author was supported by the National Natural Science Foundation of China (No. 11401401 and No. 11601441).

\section{preliminary}
In this paper, we always assume that $K$ is an algebraically closed field and all algebras are finite-dimensional algebras over $K$ and modules are finitely generated.

Let $A$ be a $K$-algebra. Let $\mod A$ be the category of finitely generated left $A$-modules. With $D=\Hom_K(-,K)$ we denote the standard duality with respect to the ground field. Then $D(A_A)$ is an injective cogenerator for $\mod A$. For an arbitrary $A$-module $_AX$ we denote by $\pd_AX$ (resp. $\ind_AX$) the projective dimension (resp. the injective dimension) of the module $_AX$.

A complex $$P^\bullet:\cdots\rightarrow P^{-1}\rightarrow P^0\xrightarrow{d^0}P^1\rightarrow \cdots$$ of finitely generated projective $A$-modules is said to be \emph{totally acyclic} provided it is acyclic and the Hom complex $\Hom_A(P^\bullet,A)$ is also acyclic \cite{AM}.
An $A$-module $M$ is said to be (finitely generated) \emph{Gorenstein projective} provided that there is a totally acyclic complex $P^\bullet$ of projective $A$-modules such that $M\cong \Ker d^0$ \cite{EJ}. We denote by $\Gproj A$ the full subcategory of $\mod A$ consisting of Gorenstein projective modules.

An algebra is of \emph{finite Cohen-Macaulay type}, or simply, \emph{CM-finite}, if there are only finitely many isomorphism classes of indecomposable finitely generated Gorenstein projecitve modules. Clearly, $A$ is CM-finite if and only if there is a finitely generated module $E$ such that $\Gproj A=\add E$. In this way, $E$ is called to be a \emph{Gorenstein projective generator}. If $A$ is self-injective, then $\Gproj A=\mod A$, so $A$ is CM-finite if and only if $A$ is representation-finite. An algebra is called \emph{CM-free} if $\Gproj A=\proj A$. If $A$ has finite global dimension, then $\Gproj A=\proj A$, so it is CM-free.

Let $A$ be a CM-finite algebra, $E_1,\dots,E_n$ all the pairwise non-isomorphic indecomposable Gorenstein projective $A$-modules. Put $E=\oplus_{i=1}^n E_i$. Then $E$ is a Gorenstein projective generator. We call $\Aus(\Gproj A):=(\End_A E)^{op}$ the \emph{Cohen-Macaulay Auslander algebra}(also called \emph{relative Auslander algebra}) of $A$.

Let $\cx$ be a subcategory of $\mod A$. Then $^\bot\cx:=\{M|\Ext^i(M,X)=0, \mbox{ for all } X\in\cx, i\geq1\}$. Dually, we can define $\cx^\bot$. In particular, we define $^\bot A:=\,^\bot
(\proj A)$.

The following lemma follows from the definition of Gorenstein projective module easily.
\begin{lemma}
(i) \cite{Be}
\begin{eqnarray*}
\Gp(A)&=&\{M\in \Lambda\mbox{-}\mathrm{mod}\,|\,\exists \mbox{ an exact sequence }
0\rightarrow M\rightarrow T^0\xrightarrow{d^0}T^1\xrightarrow{d^1}\cdots, \\ &&\mbox{ with }T^i\in\proj A,\ker d^i\in\,^\bot A,\forall i\geq0\}.
\end{eqnarray*}

(ii) If $M$ is Gorenstein projective, then $\Ext^i_A(M,L)=0$, $\forall i>0$, for all $L$ of finite projective dimension or of finite injective dimension.

(iii) If $P^\bullet$ is a totally acyclic complex, then all $\Im d^i$ are Gorenstein projective; and any
truncations
$$\cdots\rightarrow P^i\rightarrow\Im d^i\rightarrow0,\quad 0\rightarrow\Im d^i\rightarrow P^{i+1}\rightarrow\cdots$$
and
$$0\rightarrow\Im d^i\rightarrow P^{i+1}\rightarrow\cdots\rightarrow P^j\rightarrow \Im d^j\rightarrow0,i<j$$
are $\Hom_A(-,\proj A)$-exact.
\end{lemma}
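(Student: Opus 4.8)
The plan is to treat (i) as the organizing characterization, obtained directly from the notion of total acyclicity, and then to read (ii) and (iii) off of it.

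For (i) I would establish both inclusions through complete resolutions. Given $M$ Gorenstein projective with a totally acyclic complex $P^\bullet$ and $M\cong\ker d^0$, the right half $0\to M\to P^0\xrightarrow{d^0}P^1\to\cdots$ is the desired coresolution, and acyclicity of $\Hom_A(P^\bullet,A)$ forces $\Ext^j_A(\ker d^i,A)=0$ for all $j\geq1$; since every projective is a direct summand of a free module, this upgrades to $\ker d^i\in {}^\bot A$. Conversely, splicing a projective resolution $\cdots\to P_1\to P_0\to M\to0$ onto the given coresolution produces an acyclic complex of projectives, and the hypotheses $\ker d^i\in {}^\bot A$ for $i\geq0$ ensure that $\Hom_A(-,A)$ keeps the right half exact, while the case $i=0$ (that is, $M\in {}^\bot A$) takes care of the splice point and the left half; this yields total acyclicity. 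As this is precisely the statement recorded in \cite{Be}, one may alternatively just cite it.

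For (ii) I would separate the two finiteness hypotheses. If $\pd_A L=d<\infty$, I induct on $d$: the base case $L\in\proj A$ is immediate from $M\in {}^\bot A$, and a short exact sequence $0\to N\to P\to L\to0$ with $P$ projective and $\pd_A N=d-1$ reduces $\Ext^i_A(M,L)$ to $\Ext^{i+1}_A(M,N)$ in the long exact sequence, which vanishes by induction. If $\ind_A L=d<\infty$, I instead shift to the right along the coresolution of (i): writing $M^i=\ker d^i$ (so $M^0\cong M$), each short exact sequence $0\to M^i\to T^i\to M^{i+1}\to0$ with $T^i$ projective gives $\Ext^k_A(M,L)\cong\Ext^{k+j}_A(M^j,L)$ for $k\geq1$, and choosing $j$ with $k+j>d$ forces the right-hand side to vanish, since $\Ext^m_A(-,L)=0$ for all $m>d$ when $L$ has injective dimension $d$.

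For (iii) the key observation is that total acyclicity is preserved under shifting the complex. Hence $\Im d^i=\ker d^{i+1}$ appears as the degree-zero kernel of a suitably shifted totally acyclic complex and is therefore Gorenstein projective by definition; the $\Hom_A(-,\proj A)$-exactness of the displayed truncations is then a repackaging of the acyclicity of $\Hom_A(P^\bullet,A)$, again extended from $A$ to an arbitrary projective by the summand-of-free argument. I expect the only genuine subtlety to lie in the converse of (i), where one must verify that $\Hom_A(-,A)$ stays exact exactly at and around the splice point: this is where the full force of the hypotheses $\ker d^i\in {}^\bot A$ for all $i\geq0$, rather than merely $M\in {}^\bot A$, is consumed. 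The dimension-shift bookkeeping in the injective-dimension case of (ii) is the other place to keep the indexing straight, but both become routine once that is fixed.
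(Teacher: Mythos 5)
Your proposal is correct. The paper offers no proof of this lemma at all---it simply asserts that it ``follows from the definition of Gorenstein projective module easily'' and cites \cite{Be} for part (i)---and your arguments (splicing a projective resolution onto the coresolution and checking $\Hom_A(-,A)$-exactness at the cocycles, dimension shifting in both directions for (ii), and shift-invariance of total acyclicity for (iii)) are exactly the standard reasoning the paper leaves implicit, with the key subtleties (the splice point, and the passage from $A$ to arbitrary projectives via the summand-of-free argument) correctly identified and handled.
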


\begin{definition}[\cite{Ha1}, see also \cite{AR1,AR2}]
A finite-dimensional algebra $A$ is called a Gorenstein algebra (also called Iwanaga-Gorenstein algebra) if $A$ satisfies $\ind A_A<\infty$ and $\ind_AA<\infty$.
Given an $A$-module $X$. If $\Ext^i_A(X,A)=0$ for all $i>0$, then $X$ is called a Cohen-Macaulay module of $A$.
\end{definition}

Observe that for a Gorenstein algebra $A$, we have $\ind _AA=\ind A_A$, see \cite[Lemma 6.9]{Ha1}; the common value is denoted by $\Gd A$. If $\Gd A\leq d$, we say that $A$ is \emph{$d$-Gorenstein}. Furthermore, since $\pd _A D(A_A)=\ind A_A$, we get that $A$ is Gorenstein if and only if $\pd _A D(A_A)<\infty$ and $\ind_AA<\infty$.

\begin{theorem}[\cite{Bu,EJ}]
Let $A$ be a Gorenstein algebra. Then

(i) If $P^\bullet$ is an exact sequence of projective left $A$-modules, then $\Hom_A(P^\bullet,A)$ is again an exact sequence of projective right $A$-modules.

(ii) A module $G$ is Gorenstein projective if and only if there is an exact sequence $0\rightarrow G\rightarrow P^0\rightarrow P^1\rightarrow \cdots$ with each
$P^i$ projective.

(iii) $\Gproj A=\,^\bot A$.
\end{theorem}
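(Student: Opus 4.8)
The plan is to prove the three statements in the order (i), (ii), (iii), since part (i) carries all of the homological weight and the remaining parts follow from it together with one classical input. Throughout I write $d=\Gd A$, so that $\ind{}_AA=\ind A_A=d<\infty$; the single consequence of Gorensteinness that I will exploit is that $\Ext^i_A(X,A)=0$ for every $A$-module $X$ and every $i>d$ (and symmetrically over $A^{op}$).

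First I would establish (i) by a dimension-shifting argument. Given an acyclic complex $P^\bullet$ of projectives, set $Z^n=\Ker d^n$ and break it into short exact sequences $0\to Z^n\to P^n\to Z^{n+1}\to0$. Applying $\Hom_A(-,A)$ and using $\Ext^{\geq1}_A(P^n,A)=0$ yields the isomorphisms $\Ext^i_A(Z^n,A)\cong\Ext^{i+1}_A(Z^{n+1},A)$ for all $i\geq1$. Iterating $d$ times moves the first Ext group into cohomological degree $>d$, where it vanishes, so $\Ext^1_A(Z^n,A)=0$ for every $n$. Substituting this back into the long exact sequences shows that each dualized sequence $0\to\Hom_A(Z^{n+1},A)\to\Hom_A(P^n,A)\to\Hom_A(Z^n,A)\to0$ is short exact; splicing them computes the cohomology of $\Hom_A(P^\bullet,A)$ to be zero. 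This is the only genuinely computational step and the sole place where the finiteness of $\ind{}_AA$ is used.

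Part (ii) is then essentially formal. The forward implication is immediate from the definition of Gorenstein projective (take the positive half of a totally acyclic complex; compare the preceding Lemma). For the converse, glue the given coresolution $0\to G\to P^0\to P^1\to\cdots$ to a projective resolution $\cdots\to P^{-1}\to G\to0$ to produce an acyclic complex $P^\bullet$ of projectives having $G$ as a cycle; by part (i) the dual complex $\Hom_A(P^\bullet,A)$ is automatically acyclic, so $P^\bullet$ is totally acyclic and $G$ is Gorenstein projective. The content of (ii) is precisely the slogan that over a Gorenstein algebra acyclicity and total acyclicity of projective complexes coincide.

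For (iii), the inclusion $\Gproj A\subseteq{}^\bot A$ is read off from part (ii) of the preceding Lemma: a Gorenstein projective module has vanishing higher self-extensions against any module of finite projective dimension, in particular against $A$ and hence against every projective. The reverse inclusion is where the remaining work lies. Given $M\in{}^\bot A$, so that $\Ext^{\geq1}_A(M,A)=0$, I would dualize a projective resolution $\cdots\to P_1\to P_0\to M\to0$ by $(-)^\ast=\Hom_A(-,A)$; the Ext-vanishing says exactly that $0\to M^\ast\to P_0^\ast\to P_1^\ast\to\cdots$ is an exact projective coresolution of $M^\ast$ over the (again $d$-Gorenstein) algebra $A^{op}$, whence $M^\ast\in\Gproj A^{op}$ by (ii). Since $\Hom_A(-,A)$ sends totally acyclic complexes of projective right modules to totally acyclic complexes of projective left modules, it restricts to a duality $\Gproj A^{op}\to\Gproj A$, giving $M^{\ast\ast}\in\Gproj A$. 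The main obstacle is to identify $M$ with $M^{\ast\ast}$, i.e. to prove $M$ reflexive: this does not follow by diagram chasing but from the Auslander--Bridger exact sequence $0\to\Ext^1_{A^{op}}(\Tr M,A)\to M\to M^{\ast\ast}\to\Ext^2_{A^{op}}(\Tr M,A)\to0$. Its outer terms vanish because $\Tr M$ appears as a further cosyzygy in the exact coresolution of $M^\ast$, hence is itself Gorenstein projective over $A^{op}$ and so has no higher self-extensions against $A$. Granting this, $M\cong M^{\ast\ast}\in\Gproj A$, which finishes (iii).
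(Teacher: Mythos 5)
Your proof is correct, but note that the paper never proves this statement at all: it is quoted with the citation \cite{Bu,EJ}, so there is no internal argument to compare against, and what you have written is essentially the standard proof from that literature. Part (i) is the usual dimension shift $\Ext^i_A(Z^n,A)\cong\Ext^{i+1}_A(Z^{n+1},A)$ pushed past degree $d=\Gd A$, and you correctly isolate this as the only place where $\ind {}_AA<\infty$ enters; part (ii) is the standard splicing argument, i.e.\ the observation that over a Gorenstein algebra acyclicity and total acyclicity of complexes of projectives coincide. For (iii) the only delicate inclusion is ${}^\bot A\subseteq\Gproj A$, and your route is sound: dualizing a projective resolution of $M$ exhibits both $M^*$ and $\Tr M$ (the latter as a further cosyzygy of the same exact coresolution, since $\Tr M=\Coker(P_0^*\to P_1^*)\cong\Im(P_1^*\to P_2^*)$) as objects of $\Gproj A^{op}$ via (ii) applied to $A^{op}$, which is Gorenstein with the same bound $d$; the functor $(-)^*$ carries totally acyclic complexes of projectives to totally acyclic complexes because finitely generated projectives are reflexive, so $M^{**}\in\Gproj A$; and the Auslander--Bridger sequence $0\to\Ext^1_{A^{op}}(\Tr M,A)\to M\to M^{**}\to\Ext^2_{A^{op}}(\Tr M,A)\to0$ then identifies $M$ with $M^{**}$, its outer terms vanishing precisely because $\Tr M\in\Gproj A^{op}$. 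The Auslander--Bridger sequence is the one genuinely external ingredient, and citing it as classical is legitimate --- it is exactly the tool by which this equality ${}^\bot A=\Gproj A$ is established in the sources the paper cites; the alternative within the paper would be to invoke the Avramov--Martsinkovsky theorem quoted immediately afterwards, but that theorem already contains (iii) in its statement, so your self-contained argument is the more honest reconstruction.
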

For a module $M$, take a short exact sequence $$0\rightarrow \Omega M\rightarrow P\rightarrow M\rightarrow0$$
with $P$ projective. The module $\Omega M$ is called a \emph{syzygy module} of $M$. For each $i\geq1$, denote by $\Omega^i$ the $i$-th power of $\Omega$ and then for a module $X$, $\Omega^i X$ is the $i$-th syzygy module of $X$. For details, see \cite{ARS}.
\begin{theorem}[\cite{AM}]
Let $A$ be an finite-dimensional algebra and $d\geq0$. Then the following statements are equivalent:

(i) the algebra $A$ is $d$-Gorenstein;

(ii) $\Gproj A=\Omega^d(\mod A)$.

\noindent In this case, we have $\Gproj A=\, ^\bot A$.
\end{theorem}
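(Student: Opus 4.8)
The plan is to prove the two implications separately, after first noting that one of the two inclusions making up $(ii)$ is automatic. For any finite-dimensional algebra one has $\Gproj A\subseteq\Omega^d(\mod A)$: if $M$ is Gorenstein projective, choose a totally acyclic complex $P^\bullet$ with $M=\Ker d^0=\Im d^{-1}$, and the short exact sequences $0\to\Im d^{i-1}\to P^i\to\Im d^i\to0$ display $M=\Im d^{-1}=\Omega^d(\Im d^{d-1})$, so $M$ is a $d$-th syzygy. Hence the real content of $(ii)$ is the reverse inclusion $\Omega^d(\mod A)\subseteq\Gproj A$, and it is this inclusion that I would play against the homological dimensions of $A$.

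For $(i)\Rightarrow(ii)$, assume $\Gd A\le d$, so $A$ is Gorenstein and $\ind {}_AA\le d$. For an arbitrary $N\in\mod A$, dimension shifting gives $\Ext^i_A(\Omega^dN,A)\cong\Ext^{i+d}_A(N,A)$ for $i>0$, and the right-hand side vanishes because $\ind {}_AA\le d$ forces $\Ext^j_A(-,A)=0$ for all $j>d$. Thus $\Omega^dN\in{}^\bot A$, and since $^\bot A=\Gproj A$ for a Gorenstein algebra by the theorem of Buchweitz and Enochs--Jenda recalled above, we get $\Omega^dN\in\Gproj A$. This proves $(ii)$ and, together with the automatic inclusion, also the closing assertion $\Gproj A={}^\bot A$.

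For $(ii)\Rightarrow(i)$ I would first extract the left-hand bound: for every $N$ the hypothesis makes $\Omega^dN$ Gorenstein projective, so $\Ext^i_A(\Omega^dN,A)=0$ for $i>0$ and hence $\Ext^j_A(N,A)=0$ for all $N$ and all $j>d$, i.e.\ $\ind {}_AA\le d$. The delicate point is the opposite-handed bound $\ind A_A\le d$, equivalently $\pd_A D(A_A)\le d$, since $(ii)$ only constrains left modules and one is not allowed to invoke Gorenstein symmetry. Here I would use the sub-lemma that a Gorenstein projective module of finite injective dimension is projective. Granting it, put $G=\Omega^d D(A_A)$; by $(ii)$ it is Gorenstein projective, while $D(A_A)$ is injective and every projective has injective dimension $\le\ind {}_AA\le d$, so each syzygy of $D(A_A)$, in particular $G$, has finite injective dimension. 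The sub-lemma then makes $G$ projective, so $\pd_A D(A_A)\le d$ and $\ind A_A=\pd_A D(A_A)\le d$. With both one-sided injective dimensions finite and $\ind {}_AA\le d$, the criterion recalled above yields that $A$ is Gorenstein with $\Gd A\le d$.

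The sub-lemma is the step I expect to be the main obstacle, and I would prove it as follows. If $G$ is Gorenstein projective with $\ind_A G=n<\infty$, its defining totally acyclic complex provides an exact sequence $0\to G\xrightarrow{f}P^0\to\cdots\to P^n\to W\to0$ with all $P^i$ projective and $W$ Gorenstein projective, presenting $G$ as an $(n+1)$-st syzygy of $W$. Splicing this into short exact sequences and dimension shifting identify $\Ext^{n+1}_A(W,G)$ with $\Coker\bigl(\Hom_A(P^0,G)\xrightarrow{f^*}\Hom_A(G,G)\bigr)$; but $\Ext^{n+1}_A(W,G)=0$ because $\ind_A G=n$, so $f^*$ is surjective and $\id_G$ lifts along $f$. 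Therefore $f$ is a split monomorphism and $G$ is a direct summand of the projective $P^0$, hence projective. The only care needed is in this dimension-shift computation and in checking, via the earlier lemma, that the cosyzygies of $G$ stay Gorenstein projective so that the presentation above exists.
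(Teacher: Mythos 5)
The paper does not prove this statement at all---it is quoted from Avramov--Martsinkovsky \cite{AM} as a known result---so there is no internal argument to compare against; judged on its own, your proof is correct and self-contained, using only facts the paper recalls in Section 2. Your structure: the inclusion $\Gproj A\subseteq\Omega^d(\mod A)$ holds unconditionally (note this relies on the paper's non-minimal syzygy convention, $0\rightarrow\Omega M\rightarrow P\rightarrow M\rightarrow0$ with $P$ an arbitrary projective, which is exactly what makes this inclusion literally true); (i)$\Rightarrow$(ii) follows by dimension shifting together with $\Gproj A={}^\bot A$ for Gorenstein algebras (Theorem 2.3(iii) in the paper); and in (ii)$\Rightarrow$(i) you first get $\ind_AA\leq d$ by dimension shifting, then handle the genuinely delicate point---the opposite-sided bound $\ind A_A=\pd_AD(A_A)\leq d$, which (ii) does not directly see since it only constrains left modules---via your sub-lemma that a Gorenstein projective module of finite injective dimension is projective. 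Your proof of the sub-lemma (splitting $f:G\rightarrow P^0$ because $\Ext^{n+1}_A(W,G)=0$ when $\ind_AG=n$) is valid; observe that the Gorenstein projectivity of $W$ is never actually used there, only the exactness of the sequence and the projectivity of the $P^i$. One available shortcut: the sub-lemma is immediate from Lemma 2.1(ii) of the paper (Gorenstein projective modules have no higher extensions into any module of finite injective dimension): the cosyzygy $K=\Coker(G\rightarrow P^0)$ is Gorenstein projective by Lemma 2.1(iii), so $\Ext^1_A(K,G)=0$ and the sequence $0\rightarrow G\rightarrow P^0\rightarrow K\rightarrow0$ splits, making $G$ projective at once; your dimension-shift computation essentially reproves that lemma in the special case needed.
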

So for a Gorenstein algebra, the definition of Cohen-Macaulay module coincides with the one of Gorenstein projective.

Recall that for an algebra $A$, the \emph{singularity category} of $A$ is the quotient category $D_{sg}(A):=D^b(A)/K^b(\proj A)$, which is defined by Buchweitz \cite{Bu}, see also \cite{Ha1,Or1}.

\begin{theorem}[Buchweitz's Theorem, see also \cite{KV} for a more general version]\label{theorem stable category of CM modules }
Let $A$ be an Artin algebra. Then $\Gproj (A)$ is a Frobenius category with the projective modules as the projective-injective objects, and there is an exact embedding $\Phi:\underline{\Gp}A\rightarrow D_{sg}(A)$ given by $\Phi(M)=M$, where the second $M$ is the corresponding stalk complex at degree $0$, and $\Phi$ is an equivalence if and only if $A$ is Gorenstein.
\end{theorem}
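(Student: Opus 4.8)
The plan is to establish the statement in four stages: the Frobenius structure on $\Gproj A$, the well-definedness and exactness of $\Phi$, its full faithfulness, and finally the equivalence criterion. The only unconditional analytic content beyond bookkeeping lies in the third stage.

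First I would equip $\Gproj A$ with the exact structure inherited from $\mod A$, which it carries because $\Gproj A$ is closed under extensions. The projective $A$-modules lie in $\Gproj A$, and for each $M\in\Gproj A$ the syzygy sequence $0\to\Omega M\to P\to M\to0$ has $\Omega M\in\Gproj A$, while the defining totally acyclic complex yields a cosyzygy sequence $0\to M\to P^0\to\Omega^{-1}M\to0$ with $\Omega^{-1}M\in\Gproj A$; these provide enough projectives and enough injectives. To identify the two classes I use part (ii) of the Lemma above: since projective modules have projective dimension $0$, we have $\Ext^1_A(N,P)=0$ for all $N\in\Gproj A$ and all projective $P$, so every conflation ending or beginning at a projective splits. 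Hence the projective $A$-modules are precisely the projective objects and precisely the injective objects, $\Gproj A$ is Frobenius, and $\underline{\Gp}A$ is triangulated with shift $\Omega^{-1}$.

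Next, since every object of $K^b(\proj A)$ (in particular every projective module viewed as a stalk) vanishes in $D_{sg}(A)$, any morphism of Gorenstein projectives factoring through a projective is sent to $0$; thus $\Phi$ descends to $\underline{\Gp}A$. For exactness I note that a triangle in $\underline{\Gp}A$ is induced by a conflation $0\to M\to N\to L\to0$, which is a genuine short exact sequence in $\mod A$ and so yields a distinguished triangle in $D^b(A)$, hence in $D_{sg}(A)$; the identification of the triangulated shift with the $D_{sg}$-shift follows from $\Omega^{-1}M\cong M[1]$ in $D_{sg}(A)$, obtained by killing the projective term in $0\to M\to P^0\to\Omega^{-1}M\to0$. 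Therefore $\Phi$ is an exact functor.

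The hard part will be the full faithfulness, i.e. showing $\Phi$ induces an isomorphism $\underline{\Hom}_A(M,N)\xrightarrow{\sim}\Hom_{D_{sg}(A)}(M,N)$ for $M,N\in\Gproj A$. I would compute the right-hand side by the calculus of fractions, representing a morphism as a roof $M\xleftarrow{s}Z\xrightarrow{g}N$ with $\cone(s)\in K^b(\proj A)$. The strategy is to collapse such roofs to honest module maps: using that $M$ sits inside a totally acyclic complex and that, by part (iii) of the Lemma above, its truncations are $\Hom_A(-,\proj A)$-exact, one can invert $s$ modulo projectives and extract a genuine morphism $M\to N$ representing the class (fullness). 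For faithfulness, a module map $f$ vanishing in $D_{sg}(A)$ must factor in $D^b(A)$ through a bounded complex of projectives; pushing this through the coresolution $0\to N\to P^0\to\cdots$ and invoking the $\Ext$-vanishing of part (ii) forces $f$ to factor through the projective $P^0$, so $f=0$ in $\underline{\Hom}_A(M,N)$. These two vanishing statements are exactly what make the fractions degenerate, and carrying out this bookkeeping cleanly is the technical heart of the argument.

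Finally, for the equivalence criterion I argue on density, $\Phi$ being already fully faithful and exact. A standard reduction shows every object of $D_{sg}(A)$ is isomorphic to a shift of a stalk complex $M[0]$. If $A$ is $d$-Gorenstein, then $\Gproj A=\Omega^d(\mod A)$, so $\Omega^d M\in\Gproj A$ and, killing projective terms, $M\cong(\Omega^d M)[d]\cong\Phi(\Omega^d M)[d]$ in $D_{sg}(A)$; since the essential image of the triangle functor $\Phi$ is closed under shifts, $\Phi$ is dense and hence an equivalence. Conversely, if $\Phi$ is an equivalence, then every module $M$, viewed as a stalk, is isomorphic in $D_{sg}(A)$ to some $G\in\Gproj A$, which forces $\Omega^n M$ to be Gorenstein projective for $n\gg0$, i.e. every finitely generated module has finite Gorenstein projective dimension; by the known characterization of Gorenstein Artin algebras as those over which every module has finite Gorenstein projective dimension, $A$ is Gorenstein.
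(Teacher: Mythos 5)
This theorem is nowhere proved in the paper: it is quoted as Buchweitz's Theorem with references to \cite{Bu} and \cite{KV}, so there is no in-paper argument to compare yours against, and I can only assess your proposal on its own terms. Your first, second and fourth stages are sound in outline: the Frobenius structure follows exactly as you say (enough projectives from syzygy sequences, enough injectives from the cosyzygy sequences supplied by total acyclicity, identification of the two classes via $\Ext^1_A(N,P)=0$ and the splitting arguments), the descent and exactness of $\Phi$ are correct, and density under the Gorenstein hypothesis via $\Gproj A=\Omega^d(\mod A)$ is the standard argument. Your faithfulness sketch is also essentially complete: vanishing in the Verdier quotient does mean factoring through a perfect complex $Q^\bullet$ in $D^b(A)$, and the vanishing $\Ext^{>0}_A(M,\proj A)=0$ rigidifies the morphism $M\to Q^\bullet$ into a chain map, so that $f$ factors through the projective module $Q^0$.

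The genuine gap is fullness, which is precisely the unconditional content of Buchweitz's theorem, and your treatment of it --- ``one can invert $s$ modulo projectives and extract a genuine morphism $M\to N$'' --- is a statement of the goal, not an argument. What is actually needed is: (a) a cofinality lemma, namely that the canonical morphisms $\Omega^{-n}M[-n]\to M$ in $D^b(A)$ (whose cones are the perfect complexes $P^0\to\cdots\to P^{n-1}$ cut out of the coresolution of $M$) are cofinal among all morphisms $Z\to M$ with perfect cone, so that $\Hom_{D_{sg}(A)}(M,N)\cong\varinjlim_n\Ext^n_A(\Omega^{-n}M,N)$; this cofinality rests on the vanishing $\Hom_{D^b(A)}(G,Q^\bullet)=0$ for $G$ Gorenstein projective and $Q^\bullet$ perfect with terms in strictly negative degrees, proved by filtering $Q^\bullet$ by brutal truncations and using $\Ext^{>0}_A(G,\proj A)=0$; and (b) the dimension-shifting isomorphism $\Ext^n_A(\Omega^{-n}M,N)\cong\underline{\Hom}_A(M,N)$ for $M,N\in\Gproj A$, which uses that any map $M\to N$ factoring through a projective already factors through $M\hookrightarrow P^0$ (Gorenstein projectivity of the cosyzygies enters again here). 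Without (a) and (b) the central assertion of the theorem is assumed rather than proved. The same calculus is silently invoked in your converse direction: that $M\cong G$ in $D_{sg}(A)$ with $G\in\Gproj A$ forces $\Omega^nM$ to be Gorenstein projective up to projective summands for $n\gg 0$ is not automatic; it needs exactly this description of morphisms in $D_{sg}(A)$ (or an equivalent lemma), and as written it is another unproved step.
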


Let $A$ be an Artin algebra. Inspired by Buchweitz's Theorem, the \emph{Gorenstein defect category} is defined to be Verdier quotient $D_{def}(A):=D_{sg}(A)/\Im(\Phi)$, see \cite{BJO}.
From \cite{KZ}, we know that $D_{def}(A)$ is triangle equivalent to $D^b(A)/\langle \Gp(A)\rangle$, where $\langle \Gp(A)\rangle$ denotes the triangulated subcategory of $D^b(A)$ generated by $\Gp(A)$, i.e., the smallest triangulated subcategory of $D^b(A)$ containing $\Gp(A)$.

\begin{lemma}[\cite{BJO,KZ}]
Let $A$ be an Artin algebra. Then the following are equivalent.

(i) $A$ is Gorenstein;

(ii) $\underline{\Gp}(A)$ is triangle equivalent to $D_{sg}(A)$;

(iii) $D_{def}(A)=0$;

(iv) $D^b(A)=\langle \Gp(A)\rangle$.
\end{lemma}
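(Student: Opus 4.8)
The plan is to deduce all four equivalences from the two results already recalled above, namely Buchweitz's Theorem \cite{Bu} and the Kong--Zhang identification $D_{def}(A)\simeq D^b(A)/\langle\Gp(A)\rangle$ \cite{KZ}. The statement is essentially a repackaging of these, so I would not prove anything from scratch; the work lies in threading the cited facts together and keeping careful track of the functor $\Phi$. Throughout I use that $\Phi:\underline{\Gp}(A)\to D_{sg}(A)$ is, by Buchweitz's Theorem, always a fully faithful exact functor, hence that $\Im\Phi$ is a triangulated subcategory of $D_{sg}(A)$ and $D_{def}(A)=D_{sg}(A)/\Im\Phi$ makes sense \cite{BJO}.

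First I would establish (i)$\Leftrightarrow$(ii). By Buchweitz's Theorem, $\Phi$ is an equivalence precisely when $A$ is Gorenstein. Reading (ii) as the assertion that this specific $\Phi$ is a triangle equivalence therefore gives (i)$\Leftrightarrow$(ii) at once.

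Next I would show (ii)$\Leftrightarrow$(iii). A Verdier quotient $\mathcal{C}/\mathcal{D}$ vanishes exactly when $\mathcal{D}=\mathcal{C}$. Applied to $D_{def}(A)=D_{sg}(A)/\Im\Phi$, this says $D_{def}(A)=0$ if and only if $\Im\Phi=D_{sg}(A)$, i.e. if and only if the fully faithful exact functor $\Phi$ is in addition dense, i.e. if and only if $\Phi$ is an equivalence. This is exactly (ii)$\Leftrightarrow$(iii). Finally I would obtain (iii)$\Leftrightarrow$(iv) from the Kong--Zhang equivalence $D_{def}(A)\simeq D^b(A)/\langle\Gp(A)\rangle$ recalled above: the left-hand side is zero if and only if the right-hand side is, and $D^b(A)/\langle\Gp(A)\rangle=0$ means precisely $D^b(A)=\langle\Gp(A)\rangle$. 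Chaining (i)$\Leftrightarrow$(ii)$\Leftrightarrow$(iii)$\Leftrightarrow$(iv) then connects all four conditions.

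The only point requiring care---and the nearest thing to an obstacle---is the reading of (ii). As literally stated, (ii) asks merely for \emph{some} triangle equivalence $\underline{\Gp}(A)\simeq D_{sg}(A)$, whereas Buchweitz supplies the particular functor $\Phi$. The direction (i)$\Rightarrow$(ii) is unproblematic, since $\Phi$ itself works. For the converse I would route the argument through (iii) rather than attempt to recover $A$ Gorenstein from an abstract, possibly non-canonical, equivalence: the implication (iii)$\Rightarrow$(ii) already produces the strongest form of the equivalence, realized by $\Phi$, so once the cycle is closed no abstract-to-concrete gap survives. With that understood, the assembly of the three pairwise equivalences completes the proof.
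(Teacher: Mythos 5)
The paper itself gives no argument for this lemma --- it is quoted from \cite{BJO,KZ} --- so the benchmark is whether your assembly of the facts already recalled (Buchweitz's theorem, the definition $D_{def}(A)=D_{sg}(A)/\Im\Phi$, and the Kong--Zhang identification $D_{def}(A)\simeq D^b(A)/\langle \Gp(A)\rangle$) is watertight; in outline it is exactly the right assembly. The genuine flaw is your handling of (ii). Having chosen to read (ii) literally, as the existence of \emph{some} triangle equivalence, you must produce an implication \emph{out of} (ii) to make the four statements equivalent, and you never do. Your proposed repair --- ``the implication (iii)$\Rightarrow$(ii) already produces the strongest form of the equivalence, \ldots so once the cycle is closed no abstract-to-concrete gap survives'' --- is backwards: (iii)$\Rightarrow$(ii) points \emph{into} (ii). What your argument actually establishes is (i)$\Leftrightarrow$(iii)$\Leftrightarrow$(iv) together with (i)$\Rightarrow$(ii), which is not a cycle; for all it shows, the weak form of (ii) could be strictly weaker than the other three conditions. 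Whether an abstract, possibly non-canonical, equivalence $\underline{\Gp}(A)\simeq D_{sg}(A)$ forces $A$ to be Gorenstein is precisely the question you declined to answer, and none of the quoted results settle it. The correct repair is simply to read (ii), as \cite{BJO,KZ} and this paper intend, as the assertion that Buchweitz's canonical functor $\Phi$ is a triangle equivalence; with that reading, your first two steps already give a complete proof of (i)$\Leftrightarrow$(ii)$\Leftrightarrow$(iii).

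A secondary point: your statement that ``a Verdier quotient $\mathcal{C}/\mathcal{D}$ vanishes exactly when $\mathcal{D}=\mathcal{C}$'' is true only when $\mathcal{D}$ is thick; in general $\mathcal{C}/\mathcal{D}=0$ if and only if $\mathcal{C}$ equals the thick closure of $\mathcal{D}$. For $D_{def}(A)=D_{sg}(A)/\Im\Phi$ this is harmless, but it deserves a line: $\Im\Phi$ is thick because $\underline{\Gp}(A)$ is Krull--Schmidt (hence idempotent complete) and $\Phi$ is fully faithful, so any direct summand of $\Phi(M)$ is again in the essential image. The same issue recurs in your (iii)$\Leftrightarrow$(iv): from $D^b(A)/\langle \Gp(A)\rangle=0$ you get a priori only that $\langle \Gp(A)\rangle$ is dense in $D^b(A)$, not equal to it. Here the cleanest fix is to bypass the vanishing argument in one direction and prove (i)$\Rightarrow$(iv) directly: if $\Gd A\le d$, then $\Gproj A=\Omega^d(\mod A)$ by the Avramov--Martsinkovsky theorem quoted in the paper, so every module (hence, by truncation, every bounded complex) is an iterated cone of objects of $\Gp(A)$, giving $D^b(A)=\langle \Gp(A)\rangle$; the converse (iv)$\Rightarrow$(iii) is immediate from the Kong--Zhang equivalence.
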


\section{Simple gluing algebras}

Let $Q=(\ver(Q),\arr(Q))$ be a finite quiver, where $\ver(Q)$ is the set of vertices, and $\arr(Q)$ is the set of arrows. For any arrow $\alpha$ in $Q$, we denote by $s(\alpha),t(\alpha)$ the source and the target of $\alpha$ respectively. The path algebra $KQ$ of $Q$ is an associative algebra with an identity. If $I$ is an admissible ideal of $KQ$, the pair $(Q,I)$ is said to be a bound quiver. The quotient algebra $KQ/I$ is said to be the algebra of the bound quiver $(Q,I)$, or simply, a \emph{bound quiver algebra}. Denote by $e_i$ the idempotent corresponding to the vertex $i\in Q$.

In this paper, we shall freely identify the representations of $(Q,I)$ with left modules over $KQ/I$.
Recall that a representation $M$ of $(Q,I)$ is of form $(M_i,M_\alpha)_{i\in \ver(Q),\alpha\in\arr(Q)}$, or $(M_i,M_\alpha)$. So for any representation $M$, we always use $M_i$ to denote the vector space associated to the vertex $i$.

As the beginning, let us recall the definition of \emph{simple gluing algebras}.
Let $A=KQ_A/I_A$, $B=KQ_B/I_B$ be two finite-dimensional algebras. For any two vertices $a\in \ver(Q_A)$, $b\in \ver(Q_B)$, we define a new quiver $Q$ from $Q_A$ and $Q_B$ by identifying $a$ and $b$. In this way, we can view $Q_A$ and $Q_B$ as subquivers of $Q$. We call $Q$ the \emph{simple gluing quiver} of $Q_A$ and $Q_B$. Denote by $v\in \ver(Q)$ the \emph{glued vertex}. Let $I$ be the ideal of $KQ$ generated by $I_A$ and $I_B$.

The following lemma follows directly from the definition of bound quiver algebras.
\begin{lemma}
Keep the notations as above. Then $\Lambda=KQ/I$ is a finite-dimensional algebra if and only if each non-trivial path in $Q_A$ from $a$ to $a$ is in $I_A$ or each non-trivial path in $Q_B$ from $b$ to $b$ is in $I_B$.
\end{lemma}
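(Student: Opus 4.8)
The plan is to characterise finite-dimensionality through nilpotency of the radical and to analyse the paths of $Q$ via their decomposition at the glued vertex. Write $R$ for the arrow ideal of $KQ$ and $R_A,R_B$ for those of $KQ_A,KQ_B$. Since $KQ/R^N$ is finite-dimensional, with the paths of length $<N$ as a basis, and since in a finite-dimensional quotient the image of $R$ is nilpotent, $\Lambda=KQ/I$ is finite-dimensional if and only if $R^N\subseteq I$ for some $N\geq 1$. The basic combinatorial observation is that the arrows of $Q$ are the disjoint union of those of $Q_A$ and those of $Q_B$, and $v$ is the only vertex lying in both subquivers; hence every path $p$ of $Q$ has a unique decomposition $p=s_1s_2\cdots s_k$ into maximal subpaths (\emph{segments}) lying alternately in $Q_A$ and in $Q_B$, where consecutive segments necessarily meet at $v$. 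In particular each interior segment $s_2,\dots,s_{k-1}$ is a non-trivial cycle at $v$, that is, a non-trivial path from $a$ to $a$ in $Q_A$ or from $b$ to $b$ in $Q_B$. I will repeatedly use the trivial implication that if some segment $s_i$ lies in $I_A$ (resp. $I_B$), then $p\in I$, because $I$ is a two-sided ideal containing $I_A$ and $I_B$.

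For the ``if'' direction, assume without loss of generality that every non-trivial path from $a$ to $a$ in $Q_A$ lies in $I_A$. Since $A$ and $B$ are finite-dimensional, $I_A\supseteq R_A^{N_A}$ and $I_B\supseteq R_B^{N_B}$ for suitable $N_A,N_B$. Take any path $p=s_1\cdots s_k$ with $p\notin I$. Then no segment lies in $I_A$ or $I_B$; consequently every $A$-segment has length $<N_A$, every $B$-segment has length $<N_B$, and, by the standing assumption, there is no interior $A$-segment. As the segment types alternate, the absence of interior $A$-segments forces $k\leq 3$, the only length-three pattern being $ABA$. Hence $p$ has length $<2N_A+N_B$, so every path of length at least $2N_A+N_B$ lies in $I$; that is $R^N\subseteq I$ for $N=2N_A+N_B$, and $\Lambda$ is finite-dimensional.

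For the ``only if'' direction I argue by contraposition. If both conditions fail, choose a non-trivial path $\alpha$ from $a$ to $a$ in $Q_A$ with $\alpha\notin I_A$ and a non-trivial path $\beta$ from $b$ to $b$ in $Q_B$ with $\beta\notin I_B$; both are cycles at $v$. The segment decomposition of $(\alpha\beta)^m$ is the alternating word $\alpha,\beta,\dots,\alpha,\beta$, none of whose segments lies in $I_A$ or $I_B$. I claim $(\alpha\beta)^m\notin I$ for every $m$; granting this, $(\alpha\beta)^m$ is a path of unbounded length outside $I$, so $R^N\not\subseteq I$ for all $N$ and $\Lambda$ is infinite-dimensional.

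It remains to prove the claim, and this is the main obstacle, since it requires controlling the non-monomial ideal $I$ rather than merely using the ideal property. The route I would take is to establish a normal form for $\Lambda$. Choosing $K$-bases of $A$ and $B$ consisting of paths (including the trivial ones), one shows that the images in $\Lambda$ of the alternating products of non-trivial basis cycles at $v$, bordered by basis paths, span $\Lambda$: this is routine, because $I_A$ and $I_B$ are homogeneous for source and target, so each segment may be rewritten modulo $I_A$ or $I_B$ without disturbing its endpoints, hence modulo $I$. The genuinely delicate point is linear independence of these reduced paths. I would deduce it from the identification $e_v\Lambda e_v\cong (e_aAe_a)\ast_K(e_bBe_b)$, the free product of the two local algebras amalgamated over their common unit: in such a free product the alternating products of elements of the augmentation ideals are linearly independent, and $\alpha$ and $\beta$ lie in the respective augmentation ideals, whence $(\alpha\beta)^m\neq 0$. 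Equivalently, independence can be obtained by a diamond-lemma argument showing that the relations generating $I$ form a confluent rewriting system whose irreducible words are exactly the reduced paths. Either way, once independence is in hand the claim and the lemma follow.
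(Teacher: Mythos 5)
The paper itself offers no proof of this lemma (it is asserted to ``follow directly from the definition of bound quiver algebras''), so your argument is necessarily its own route, and the framework you chose is the right one. Your ``if'' direction is complete and correct: the unique decomposition of a path of $Q$ into maximal segments lying alternately in $Q_A$ and $Q_B$, the observation that interior segments are nontrivial cycles at $v$, and the alternation argument forcing the pattern to be at worst $ABA$ give $R^{2N_A+N_B}\subseteq I$, whence $\Lambda$ is a quotient of $KQ/R^{2N_A+N_B}$ and is finite-dimensional.

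Two points in the ``only if'' direction need repair. First, your opening criterion is false as justified: in a finite-dimensional quotient $KQ/I$ the image of $R$ need \emph{not} be nilpotent, even when $I\subseteq R^2$. For $Q$ a single loop $x$ and $I=\langle x^2+x^3\rangle$, the quotient $K[x]/\langle x^2+x^3\rangle$ is three-dimensional, yet $\bar{x}^n=\pm\bar{x}^2\neq0$ for all $n$, so $R^N\not\subseteq I$ for every $N$. Hence exhibiting paths of unbounded length outside $I$ does not by itself prove infinite-dimensionality; worse, in the present setting the implication ``finite-dimensional $\Rightarrow R^N\subseteq I$'' is equivalent to the lemma being proved (via your own ``if'' direction), so invoking it is circular. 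The repair is exactly your closing claim: one must show the images of $(\alpha\beta)^m$, $m\geq1$, are linearly \emph{independent} in $\Lambda$, which gives infinite dimension directly and makes the nilpotency criterion irrelevant; your argument should be restructured to conclude from independence, not from non-membership in $I$. Second, that independence rests on the identification $e_v\Lambda e_v\cong(e_aAe_a)\ast_K(e_bBe_b)$, which you assert but do not prove, and it is the crux: this is precisely where the hypothesis $I=\langle I_A,I_B\rangle$ enters (for the larger ideal $\langle x^3,y^3,xy,yx\rangle$ on two loops at one vertex, the glued algebra is finite-dimensional although both cycle conditions fail, so no proof can bypass this step). The identification is true and can be proved along the lines you hint at: (i) $e_vKQe_v\cong(e_aKQ_Ae_a)\ast_K(e_bKQ_Be_b)$, since every cycle at $v$ factors uniquely as an alternating product of nontrivial $A$- and $B$-cycles at $v$; (ii) $e_vIe_v$ equals the ideal of $e_vKQe_v$ generated by $e_aI_Ae_a\cup e_bI_Be_b$ --- take the generators of $I$ to be uniform elements $r$ of $I_A$ or $I_B$ and, in any term $prq$ lying in $e_vKQe_v$, absorb into $r$ the maximal $A$-segments (resp. $B$-segments) of $p$ and $q$ adjacent to it; (iii) quotienting the pushout $C\ast_K D$ by the ideal generated by ideals of the two factors yields the pushout of the quotients. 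Your diamond-lemma alternative is also viable, but note that the given generators of $I$ need not be confluent; one must first choose confluent systems (Gr\"obner bases) for $I_A$ and $I_B$ separately, and then observe there are no cross-overlaps because a common subword would be a nontrivial path lying in both subquivers. With either of these filled in, your proof is correct.
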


In the following, we always assume that the above condition holds, and call $\Lambda$ the \emph{simple gluing algebra} of $A$ and $B$. Inductively, from finitely many bound quiver algebras $A_1,\dots,A_n$, we can define simple gluing algebra (which is finite-dimensional) of $A_1,\dots,A_n$.

Without loss of generality, we always assume that each non-trivial path in $Q_A$ from $a$ to $a$ is in $I_A$. Denote by $e_B=\sum_{i\in Q_B} e_i$ the idempotent of $\Lambda$. Obviously, $A$ and $B$ are subalgebras (and also quotient algebras) of $\Lambda$, and
$B\cong e_B \Lambda e_B$.

Since $B=e_B\Lambda e_B$ is a subalgebra of $\Lambda$, there exists an exact functor $j_\mu:\mod \Lambda\rightarrow \mod B$, which maps $M$ to $e_B M$. Note that $j_\mu=e_B\Lambda\otimes_\Lambda-$. In fact, $j_\mu$ restricts the representation of $(Q,I)$ to the representation of $(Q_B,I_B)$. $j_\mu$ admits a right adjoint functor $j_\rho:=\Hom_B(_B\Lambda,-):\mod B\rightarrow \mod\Lambda$, and admits a left adjoint functor $j_\lambda:=\Lambda\otimes_B-=\Lambda e_B\otimes_B-: \mod B\rightarrow \mod\Lambda$, see e.g. \cite{CPS2}.

Similarly, since $A$ is also a subalgebra of $\Lambda$, there exists a natural exact functor $i_\mu:\mod \Lambda\rightarrow \mod A$.
$i_\mu$ admits a right adjoint functor $i_\rho:=\Hom_A(_A\Lambda,-):\mod A\rightarrow \mod \Lambda$, and admits a left adjoint functor $i_\lambda:= \Lambda\otimes_A-: \mod A\rightarrow \mod \Lambda$.

From the structure of $\Lambda$, obviously, $\Lambda$ is projective both as left (also right) modules over $A$ and $B$. So we have the following lemma immediately.

\begin{lemma}
Keep the notations as above. Then all the functors $j_\lambda,j_\mu,j_\rho$, $i_\lambda,i_\mu,i_\rho$ are exact functors. In particular, $i_\lambda$, $i_\mu$, $j_\mu$ and $j_\lambda$ preserve projectives, $i_\rho$, $i_\mu$, $j_\mu$ and $j_\rho$ preserve injectives.
\end{lemma}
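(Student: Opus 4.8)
The plan is to deduce everything from the single structural fact just recorded---that $\Lambda$ is projective both as a left and as a right module over each of $A$ and $B$---together with the two adjoint triples $(i_\lambda,i_\mu,i_\rho)$ and $(j_\lambda,j_\mu,j_\rho)$, in which $i_\lambda\dashv i_\mu\dashv i_\rho$ and $j_\lambda\dashv j_\mu\dashv j_\rho$.

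First I would settle exactness. The functors $i_\mu$ and $j_\mu$ are restriction of scalars (indeed $j_\mu=e_B\Lambda\otimes_\Lambda-$, where $e_B\Lambda$ is a direct summand of $\Lambda$ as a right $\Lambda$-module), hence exact without any extra hypothesis. For the tensor functors $i_\lambda=\Lambda\otimes_A-$ and $j_\lambda=\Lambda\otimes_B-$, exactness is equivalent to flatness of $\Lambda$ as a right $A$- (respectively right $B$-) module, which holds since $\Lambda$ is even projective on that side. Dually, $i_\rho=\Hom_A({}_A\Lambda,-)$ and $j_\rho=\Hom_B({}_B\Lambda,-)$ are exact because $\Hom_A(P,-)$ and $\Hom_B(P,-)$ are exact for projective $P$, and ${}_A\Lambda$, ${}_B\Lambda$ are projective as left modules. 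This establishes exactness of all six functors.

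Next I would invoke two standard categorical principles: a left adjoint of an exact functor preserves projective objects, and a right adjoint of an exact functor preserves injective objects. Concretely, if $F\dashv G$ with $G$ exact and $P$ projective, then $\Hom(F(P),-)\cong\Hom(P,G(-))=\Hom(P,-)\circ G$ is a composite of exact functors, so $F(P)$ is projective; the injective statement is dual. Applying this to the two adjoint triples and the exactness just proved yields each assertion at once: $i_\lambda,j_\lambda$ preserve projectives as left adjoints of the exact $i_\mu,j_\mu$, while $i_\mu,j_\mu$ preserve projectives as left adjoints of the exact $i_\rho,j_\rho$; dually $i_\mu,j_\mu$ preserve injectives as right adjoints of the exact $i_\lambda,j_\lambda$, and $i_\rho,j_\rho$ preserve injectives as right adjoints of the exact $i_\mu,j_\mu$. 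This reproduces exactly the two lists in the statement.

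There is no serious obstacle here; the entire content is packed into the exactness of the six functors, which in turn rests on the two-sided projectivity of $\Lambda$ over $A$ and $B$. The only point requiring a little care is the bookkeeping---matching each functor to the correct side of the correct adjunction, and in particular noticing that the restriction functors $i_\mu,j_\mu$ occupy the privileged position of being simultaneously a left adjoint (of $i_\rho,j_\rho$) and a right adjoint (of $i_\lambda,j_\lambda$), which is precisely why they preserve both projectives and injectives.
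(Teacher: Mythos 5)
Your proposal is correct and follows essentially the same route as the paper: the paper derives the lemma "immediately" from the fact, recorded just before it, that $\Lambda$ is projective as a left and right module over $A$ and $B$ (giving exactness of the tensor, restriction, and Hom functors), with the preservation statements then following from the standard adjunction argument you spell out. Your write-up merely makes explicit the bookkeeping that the paper leaves implicit.
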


For any vertex $i\in \ver(Q_A)$, we denote by $P(i)$ (resp. $_A S(i)$, $I(i)$) the projective (resp. simple, injective) $A$-module corresponding to $i$.
For any vertex $i\in \ver(Q_B)$, we denote by $Q(i)$ (resp. $_B S(i)$, $J(i)$) the projective (resp. simple, injective) $B$-module corresponding to $i$.
For any vertex $i\in \ver(Q)$, we denote by $U(i)$ (resp. $_\Lambda S(i)$, $V(i)$) the projective (resp. simple, injective) $\Lambda$-module corresponding to $i$.

Note that $i_\lambda(P(i))=U(i)$, $i_\rho(I(i))=V(i)$ for any $i\in Q_A$, and $j_\lambda(Q(i))= U(i)$ and $j_\rho(J(i))=V(i)$ for any $i\in Q_B$.

In the following, we describe the actions of all these functors on representations.
First, from the structure of indecomposable projective modules \cite{ASS}, we can assume that $Q(v)$ is locally as the following left figure shows, where $j_1,\dots j_q\in Q_A$, $k_1,\dots,k_r\in Q_B$ and each of them is the ending point of some arrow starting at $v$. Then $j_\lambda(\,_B S(b))$ and $i_\lambda(\,_AS(a))$ are as the following middle and right figures show respectively.
\begin{center}\setlength{\unitlength}{0.7mm}
 \begin{picture}(70,30)(0,10)
\put(-40,10){\begin{picture}(50,10)
\put(20,20){$v$}

\put(0,0){$j_1$}

\put(7,0){$\cdots$}
\put(15,0){$j_q$}

\put(25,0){$k_1$}

\put(32,0){$\cdots$}
\put(40,0){$k_r$}

\put(18,18){\vector(-1,-1){14}}
\put(20.5,18){\vector(-1,-4){3.3}}

\put(21.5,18){\vector(1,-4){3.3}}

\put(23,18){\vector(1,-1){14}}

\put(1,-8){$\vdots$}

\put(16,-8){$\vdots$}

\put(26,-8){$\vdots$}
\put(41,-8){$\vdots$}

\end{picture}}

 \put(30,10){\begin{picture}(50,10)

\put(20,20){$v$}

\put(0,0){$j_1$}

\put(7,0){$\cdots$}
\put(15,0){$j_q$}

\put(18,18){\vector(-1,-1){14}}
\put(20.5,18){\vector(-1,-4){3.3}}

\put(1,-8){$\vdots$}

\put(16,-8){$\vdots$}

\end{picture}}

\put(60,10){\begin{picture}(50,10)
\put(20,20){$v$}

\put(25,0){$k_1$}

\put(32,0){$\cdots$}
\put(40,0){$k_r$}

\put(21.5,18){\vector(1,-4){3.3}}

\put(23,18){\vector(1,-1){14}}

\put(26,-8){$\vdots$}
\put(41,-8){$\vdots$}

\end{picture}}

\put(-40,-10){Figure 1. The structures of $Q(v)$, $j_\lambda(\,_BS(b))$ and $i_\lambda(\,_AS(a))$.}
\end{picture}

\vspace{1.9cm}
\end{center}

Let $M=(M_i,M_\alpha)$ be a representation of $(Q_A,I_A)$. We assume that $M$ is locally as the following left figure shows, where $i_1,\dots i_p$ are the starting points of arrows ending to $v=a$ in $Q_A$. Then $i_\lambda(M)$ is as the following right figure shows, where the submodule in the dashed box is $i_\lambda(\,_AS(a))^{\oplus\dim M_v }$.

\begin{center}\setlength{\unitlength}{0.7mm}
 \begin{picture}(80,55)(0,10)

 \put(-30,10){\begin{picture}(50,10)

\put(-5,40){$M_{i_1}$}

\put(5,40){$\cdots$}
\put(15,40){$M_{i_p}$}

\put(18,20){$M_v$}

\put(-5,0){$M_{j_1}$}

\put(5,0){$\cdots$}
\put(15,0){$M_{j_q}$}

\put(3,38){\vector(1,-1){14}}
\put(17,38){\vector(1,-4){3.2}}
\put(17,18){\vector(-1,-1){14}}
\put(20.5,18){\vector(-1,-4){3.3}}

\put(-3,46){$\vdots$}

\put(17,46){$\vdots$}

\put(-3,-8){$\vdots$}

\put(17,-8){$\vdots$}

\end{picture}}

\put(50,10){\begin{picture}(50,10)
\put(-5,40){$M_{i_1}$}

\put(5,40){$\cdots$}
\put(15,40){$M_{i_p}$}

\put(18,20){$M_v$}

\put(-5,0){$M_{j_1}$}

\put(5,0){$\cdots$}
\put(15,0){$M_{j_q}$}

\put(3,38){\vector(1,-1){14}}
\put(17,38){\vector(1,-4){3.2}}
\put(17,18){\vector(-1,-1){14}}
\put(20.5,18){\vector(-1,-4){3.3}}

\put(22,18){\vector(1,-2){6.3}}
\put(23,18){\vector(2,-1){24}}

\put(44,0){$M_{k_r}$}
\put(36,0){$\cdots$}
\put(27,0){$M_{k_1}$}

\put(-3,46){$\vdots$}

\put(17,46){$\vdots$}

\put(-3,-8){$\vdots$}

\put(17,-8){$\vdots$}

\put(46,-8){$\vdots$}

\put(29,-8){$\vdots$}

\qbezier[40](17,24)(21,10)(28,-9)
\qbezier[40](28,-9)(40,-12)(60,-9)
\qbezier[40](17,24)(30,26)(50,24)
\qbezier[40](50,24)(55,15)(60,-9)

\put(35,27){$i_\lambda(\,_AS(a))^{\oplus\dim M_v }$}
\end{picture}}

\put(-40,-10){Figure 2. The local structures of $M\in \mod A$ and $i_\lambda(M)$.}
\end{picture}

\vspace{1.9cm}
\end{center}

The action of $j_\lambda$ is similar, and $i_\mu$, $j_\mu$ are the restriction functors. For $i_\rho$ and $j_\rho$, their actions are dual to these of $i_\lambda$ and $j_\lambda$.

For any $M\in\mod A$ and $N\in\mod \Lambda$, we denote by
$$\alpha_{M,N}: \Hom_\Lambda(i_\lambda(M),N)\xrightarrow{\sim} \Hom_A(M,i_\mu(N))$$
the adjoint isomorphism.
For any $M\in \mod A$, we denote by $\mu_M= \alpha_{M, i_\lambda (M)} (1_{i_\lambda (M)})$ the adjunction morphism, and for any $N\in \mod \Lambda$, we denote by
$\epsilon_N= \alpha_{i_\mu(N),N}^{-1}(1_{i_\mu (N)})$.

Similarly, for any $L\in\mod B$ and $N\in\mod \Lambda$ we denote by
$$\beta_{L,N}: \Hom_\Lambda(j_\lambda(L),N)\xrightarrow{\sim} \Hom_B(L,i_\mu(N))$$
the adjoint isomorphism.
For any $L\in \mod B$, we denote by $\nu_L= \beta_{L, j_\lambda (L)} (1_{j_\lambda (L)})$ the adjunction morphism, and for any $N\in \mod \Lambda$, we denote by
$\zeta_N= \beta_{j_\mu(N),N}^{-1}(1_{j_\mu (N)})$.

It is easy to see that for any $M\in \mod A$, $i_\mu i_\lambda M= M\oplus P(a)^{\oplus m}$ for some $m$, and $\mu_M$ is the section map.
Furthermore, for any morphism
$$f=(f_x)_{x\in \ver(Q_A)} : M=(M_x,M_\alpha)_{x\in \ver( Q_A),\alpha\in \arr(Q_A)}\rightarrow  N=(N_x,N_\alpha)_{x\in \ver( Q_A),\alpha\in \arr(Q_A)}$$ in $\mod A$, we get that
$i_\mu i_\lambda (f)$ is of form
\begin{equation}\label{equation form of morphism 1}
 \left( \begin{array}{cc} f& 0\\
0& h \end{array} \right):  M\oplus P(a)^{\oplus m} \rightarrow  N\oplus P(a)^{\oplus n},
\end{equation}
where $h:P(a)^{\oplus m} \rightarrow P(a)^{\oplus n}$ can be represented by a $n\times m$ matrix with its entries $h_{ij}:P(a)\rightarrow P(a)$ in $K$. In particular, $h$ is determined by the map $f_v$.

For any morphism
$$g=(g_y)_{y\in \ver(Q_B)} : M=(M_y,M_\beta)_{y\in \ver( Q_B),\beta\in \arr(Q_B)}\rightarrow  N=(N_y,N_\beta)_{y\in \ver( Q_B),\beta\in \arr(Q_B)}$$ in $\mod B$, it is similar to get the form of $j_\mu j_\lambda (g)$.
Besides, we also get that $i_\mu j_\lambda (M)= P(a)^{\oplus m}$ for some $m$, and
$i_\mu j_\lambda (N)= P(a)^{\oplus n}$ for some $n$. Then $i_\mu j_\lambda(g)$ is of form
\begin{equation}\label{equation form of morphism 2}
h:  P(a)^{\oplus m} \rightarrow P(a)^{\oplus n},
\end{equation}
where $h:P(a)^{\oplus m} \rightarrow P(a)^{\oplus n}$ can be represented by a $n\times m$ matrix with its entries $h_{ij}:P(a)\rightarrow P(a)$ in $K$. In particular, $h$ is determined by the map $g_v$.

Similarly, we can describe $j_\mu i_\lambda (f)$ for any $f:M\rightarrow N$ in $\mod A$.

For any $N\in\mod \Lambda$, we denote by $l(N)$ its \emph{length}.

\begin{lemma}\label{lemma existence of short exact sequene 1}
Keep the notations as above. Let $M=(M_i,M_\gamma)$ be a $\Lambda$-module. Then we have a short exact sequence
\begin{equation}\label{equation short exact sequence 1}
0\rightarrow U(v)^{\oplus\dim M_v} \rightarrow i_\lambda i_\mu M\oplus j_\lambda j_\mu M\xrightarrow{(\mu_M,\nu_M)} M\rightarrow0.
\end{equation}
\end{lemma}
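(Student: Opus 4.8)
The two components of the right-hand map are the adjunction counits: writing $\mu_M=\epsilon_M\colon i_\lambda i_\mu M\to M$ and $\nu_M=\zeta_M\colon j_\lambda j_\mu M\to M$, the map is $(\xi,\eta)\mapsto\mu_M(\xi)+\nu_M(\eta)$. To build the left-hand map I would fix a basis $m_1,\dots,m_d$ of $M_v$ at the glued vertex ($d=\dim M_v$). Since $U(v)=\Lambda e_v$ is projective, a homomorphism out of the $\ell$-th copy of $U(v)$ is determined by the image of its top, an element of $(i_\lambda i_\mu M\oplus j_\lambda j_\mu M)_v=M_v\oplus M_v$; so I define $g\colon U(v)^{\oplus d}\to i_\lambda i_\mu M\oplus j_\lambda j_\mu M$ by sending the $\ell$-th top to $(e_v\otimes m_\ell,\,-e_v\otimes m_\ell)$, using the identifications $U(v)\cong i_\lambda P(v)\cong j_\lambda Q(v)$ coming from $i_\lambda(P(i))=U(i)$ and $j_\lambda(Q(i))=U(i)$ at $i=v$.

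Two parts are immediate. First, the sequence is a complex: on the $\ell$-th top the counits satisfy $\mu_M(e_v\otimes m_\ell)=m_\ell=\nu_M(e_v\otimes m_\ell)$, so the two contributions cancel, and since $U(v)^{\oplus d}$ is generated in degree $v$ this forces $(\mu_M,\nu_M)\circ g=0$. Second, $(\mu_M,\nu_M)$ is surjective: $\Im\mu_M$ is the $\Lambda$-submodule of $M$ generated by $i_\mu M=M|_{Q_A}$ and $\Im\nu_M$ the one generated by $M|_{Q_B}$, and since $\ver(Q)=\ver(Q_A)\cup\ver(Q_B)$ these already contain every $M_x$, hence generate $M$.

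For exactness the plan is to exploit that $i_\mu$ and $j_\mu$ are exact and that, because every vertex lies in $Q_A$ or in $Q_B$, a complex of $\Lambda$-modules is exact if and only if both its $i_\mu$- and $j_\mu$-images are exact. Applying $j_\mu$ and using $j_\mu j_\lambda j_\mu M\cong j_\mu M$, $j_\mu i_\lambda i_\mu M\cong Q(v)^{\oplus d}$ and $j_\mu U(v)\cong Q(v)$ turns the sequence into the manifestly split exact sequence $0\to Q(v)^{\oplus d}\xrightarrow{\binom{1}{-\rho}}Q(v)^{\oplus d}\oplus j_\mu M\xrightarrow{(\rho,1)}j_\mu M\to 0$, where $\rho=j_\mu\mu_M$. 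Applying $i_\mu$ instead, the composition formulas read $i_\mu i_\lambda i_\mu M\cong i_\mu M\oplus P(v)^{\oplus m}$, $i_\mu j_\lambda j_\mu M\cong P(v)^{\oplus d}$ and $i_\mu U(v)\cong P(v)^{\oplus(1+c)}$, where the extra copies of $P(v)=P(a)$ are produced by the nonzero paths from $v$ to $v$ lying in $Q_B$; here $c$ is their number and $m=cd$.

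The heart of the matter is therefore the $A$-side. Granting the numerical identity $m=cd$, a vertexwise count yields $l(i_\lambda i_\mu M)+l(j_\lambda j_\mu M)=l(M)+d\cdot l(U(v))$, so once $g$ is injective its image, which lies in $\ker(\mu_M,\nu_M)$, has exactly the length needed to fill that kernel and the sequence is exact. Thus the one genuinely delicate point is the injectivity of $g$ at the vertices of $Q_A\setminus\{v\}$ — equivalently, the exactness of the $i_\mu$-image, in which the summands $P(v)^{\oplus m}$ must be matched against $i_\mu U(v)^{\oplus d}$. The asymmetry forced by the standing hypothesis (no nonzero path $a\to a$ in $Q_A$, so that $j_\lambda j_\mu M$ acquires no spurious projective summands while $i_\lambda i_\mu M$ does) is precisely what makes this step the main obstacle. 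I would resolve it from the explicit action of $i_\lambda$ and $j_\lambda$ on representations in Figures 1--2, expressing $g$ at these vertices through the matrix forms \eqref{equation form of morphism 1}--\eqref{equation form of morphism 2} and tracking how the copies of $P(v)$ created by the $Q_B$-loops at $v$ are identified between the two summands; the right map being a split epimorphism (split by the unit of $i_\lambda\dashv i_\mu$) then reduces injectivity to a comparison of two projective modules of equal length.
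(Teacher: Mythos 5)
Your strategy is genuinely different from the paper's: you construct the kernel map $g$ explicitly, check the complex property and surjectivity, and then try to get exactness by applying the exact functors $i_\mu$, $j_\mu$ (exactness of a complex of representations can indeed be tested on the two subquivers) together with a length count. The paper instead argues by induction on $l(M)$: the base case of simple modules is read off from Figure 1 (for $S(v)$ one sees directly that $0\rightarrow U(v)\rightarrow i_\lambda(\,_AS(a))\oplus j_\lambda(\,_BS(b))\rightarrow S(v)\rightarrow 0$ is exact), and the inductive step uses a short exact sequence $0\rightarrow M_1\rightarrow M\rightarrow M_2\rightarrow 0$, exactness of the four functors, naturality of the adjunction morphisms, and a nine-lemma diagram, with projectivity of $U(v)$ splitting the resulting extension of kernels; none of the multiplicity bookkeeping ($m=cd$, $i_\mu U(v)\cong P(a)^{\oplus(1+c)}$, etc.) that your argument depends on is ever needed there.

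The problem is that your proposal has a genuine gap exactly at the step you yourself call ``the heart of the matter''. Everything reduces to injectivity of $i_\mu(g)$, and your plan for it is: the $i_\mu$-image of the right-hand map is a split epimorphism (true, by the triangle identity $i_\mu(\epsilon_M)\circ\eta_{i_\mu M}=\mathrm{id}$), so by Krull--Schmidt its kernel is a projective module of the same length as $i_\mu(U(v)^{\oplus d})$ --- indeed both are $\cong P(a)^{\oplus d(1+c)}$ --- and injectivity should follow from ``a comparison of two projective modules of equal length''. This reduction is invalid: the source and target of $i_\mu(g)$ being abstractly isomorphic projectives says nothing about the particular map $i_\mu(g)$ being injective, since an endomorphism of $P(a)^{\oplus n}$ can perfectly well have a kernel (the first component of $g$, the counit composed with the $A$-action, genuinely does kill elements in general). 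To close the gap you must actually compute $i_\mu(g)$ in block form, using the path decompositions $e_A\Lambda e_v\cong P(a)\oplus\bigl(P(a)\otimes_K V\bigr)$ with $V=e_b(\rad B)e_b$, $e_A\Lambda e_A\cong A\oplus\bigl(P(a)\otimes_K V\otimes_K e_aA\bigr)$ and $e_A\Lambda e_B\cong P(a)\otimes_K e_bB$, and observe that $g_2$ restricted to the ``pure $Q_A$-path'' summand and $g_1$ restricted to the ``paths through $Q_B$'' summand are identity maps onto the free summands $P(a)\otimes_K M_v$ and $P(a)\otimes_K V\otimes_K M_v$ of the target, so that after permuting summands $i_\mu(g)$ is triangular with isomorphisms on the diagonal, hence injective. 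That computation is precisely the ``tracking'' you postpone, and it is where the standing asymmetry (no nontrivial path $a\rightarrow a$ survives in $A$) enters; without it your proof is incomplete, whereas the paper's induction bypasses the issue entirely.
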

\begin{proof}
First, suppose that $M=\,_\Lambda S(i)$ is a simple $\Lambda$-module. Without loss of generality, we assume that $i\in \ver(Q_A)$.
If $i\neq v$, then $i_\lambda i_\mu (_\Lambda S(i))=\,_\Lambda S(i)$ and $j_\lambda j_\mu (_\Lambda S(i))=0$, which satisfies (\ref{equation short exact sequence 1}). If $i=v$, then $i_\mu(_\Lambda S(v))=\,_A S(a)$,
and $j_\mu(_\Lambda S(v))=\,_B S(a)$.
From the structures of $Q(v)$, $j_\lambda(\,_BS(b))$ and $i_\lambda(\,_AS(a))$ in Figure 1, it is easy to see that there exists a
short exact sequence
$$0\rightarrow U(v)\rightarrow i_\lambda i_\mu S(v)\oplus j_\lambda j_\mu S(v)\rightarrow S(v)\rightarrow0,$$
which satisfies the requirement.

For general $M$, we prove it by induction on its length. If $l(M)=1$, then $M$ is a simple module, and the result follows from the above.
If $l(M)>1$, then there exists $M_1,M_2$ such that $0<l(M_i)<l(M)$ for $i=1,2$, and there is a short exact sequence
$$0\rightarrow M_1\xrightarrow{f} M\xrightarrow{g} M_2\rightarrow0.$$
Obviously, $\dim M_v=\dim (M_1)_v+\dim (M_2)_v$ for the glued vertex $v\in \ver(Q)$.

Since $i_\lambda, i_\mu,j_\lambda,j_\mu$ are exact functors, there are two short exact sequences
$$0\rightarrow i_\lambda i_\mu M_1 \xrightarrow{i_\lambda i_\mu(f) } i_\lambda i_\mu M\xrightarrow{i_\lambda i_\mu(g) } i_\lambda i_\mu M_2\rightarrow0,$$
and
$$0\rightarrow j_\lambda j_\mu M_1 \xrightarrow{j_\lambda j_\mu(f) } j_\lambda j_\mu M\xrightarrow{j_\lambda j_\mu(g) } j_\lambda j_\mu M_2\rightarrow0.$$
The inductive assumption yields the following short exact sequences:
$$0\rightarrow U(v)^{\oplus\dim (M_1)_v} \rightarrow i_\lambda i_\mu M_1\oplus j_\lambda j_\mu M_1\xrightarrow{(\mu_{M_1},\nu_{M_1})} M_1\rightarrow0,$$
and
$$0\rightarrow U(v)^{\oplus\dim (M_2)_v} \rightarrow i_\lambda i_\mu M_2\oplus j_\lambda j_\mu M_2\xrightarrow{(\mu_{M_2},\nu_{M_2})} M_2\rightarrow0.$$

From the naturality of the adjoint pairs $(i_\lambda,i_\mu)$ and $(j_\lambda,j_\mu)$, we get the following commutative diagram
\[\xymatrix{ U(v)^{\oplus\dim (M_1)_v} \ar[r]  \ar@{.>}[dd] & i_\lambda i_\mu M_1\oplus j_\lambda j_\mu M_1 \ar[rr]^{\quad\quad(\mu_{M_1},\nu_{M_1})} \ar[dd]^{\tiny \left( \begin{array}{cc} i_\lambda i_\mu(f) &\\ &j_\lambda j_\mu(f) \end{array} \right)}&& M_1\ar[dd]^{f}\\
\\
N\ar[r] \ar@{.>}[dd] & i_\lambda i_\mu M\oplus j_\lambda j_\mu M \ar[rr]^{\quad\quad(\mu_{M},\nu_{M})} \ar[dd]^{ \tiny\left( \begin{array}{cc} i_\lambda i_\mu(g) &\\ &j_\lambda j_\mu(g) \end{array} \right)} &&M \ar[dd]^{g} \\
\\
 U(v)^{\oplus\dim (M_2)_v} \ar[r]  & i_\lambda i_\mu M_2\oplus j_\lambda j_\mu M_2 \ar[rr]^{\quad\quad(\mu_{M_2},\nu_{M_2})} &&M_2, }\]
where $N$ is the kernel of $(\mu_{M},\nu_{M})$. It is obvious that all the sequences appearing in the rows and columns of the above commutative diagram are short exact sequences. From the short exact sequence in the first column, we get that
$$N\cong U(v)^{\oplus \dim (M_1)_v+\dim (M_2)_v}= U(v)^{\oplus \dim M_v} $$
since $U(v)$ is projective, and then there is a short exact sequence
$$0\rightarrow U(v)^{\oplus\dim M_v} \rightarrow i_\lambda i_\mu M\oplus j_\lambda j_\mu M\xrightarrow{(\mu_M,\nu_M)} M\rightarrow0.$$
\end{proof}

Similar to Gorenstein property of the upper triangular matrix algebras obtained in \cite{Chen1}, we get the following result.
\begin{proposition}\label{lemma simple gluing Nakayama algebra Gorenstein}
Let $\Lambda$ be a simple gluing algebra of the two finite-dimensional bound quiver algebras $A$ and $B$. Then $\Lambda$ is Gorenstein if and only if $A$ and $B$ are Gorenstein. In particular, $\max\{\Gd A,\Gd B\}\leq\Gd (\Lambda)\leq \max\{\Gd A,\Gd B,1\}$. 
\end{proposition}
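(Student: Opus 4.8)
The plan is to transport projective and injective dimensions between $\mod\Lambda$ and $\mod A$, $\mod B$ through the six exact functors, using the short exact sequence of Lemma \ref{lemma existence of short exact sequene 1} as the only genuinely ``gluing'' ingredient. The first step I would record is a dimension lemma: for any $M\in\mod\Lambda$ one has $\pd_\Lambda M<\infty$ if and only if $\pd_A i_\mu M<\infty$ and $\pd_B j_\mu M<\infty$, together with the estimate $\pd_A i_\mu M,\ \pd_B j_\mu M\le\pd_\Lambda M\le\max\{\pd_A i_\mu M,\pd_B j_\mu M,1\}$. The left-hand inequalities are immediate since $i_\mu,j_\mu$ are exact and preserve projectives, so they carry a projective resolution of $M$ to projective resolutions of $i_\mu M$ and $j_\mu M$. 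The right-hand inequality comes from Lemma \ref{lemma existence of short exact sequene 1}: because $i_\lambda,j_\lambda$ are exact and preserve projectives, $\pd_\Lambda(i_\lambda i_\mu M\oplus j_\lambda j_\mu M)\le\max\{\pd_A i_\mu M,\pd_B j_\mu M\}$, and the kernel $U(v)^{\oplus\dim M_v}$ is projective, so a dimension shift along the sequence yields the bound, the extra $1$ being exactly the contribution of the projective term $U(v)^{\oplus\dim M_v}$.

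For the direction ``$A,B$ Gorenstein $\Rightarrow\Lambda$ Gorenstein'', set $d=\max\{\Gd A,\Gd B\}$. Applying the dimension lemma to each indecomposable injective $V(i)$ and using that $i_\mu,j_\mu$ preserve injectives, the restrictions $i_\mu V(i)$ and $j_\mu V(i)$ are injective over $A$ and $B$, hence of projective dimension at most $\Gd A$ and $\Gd B$ respectively; therefore $\pd_\Lambda V(i)\le\max\{d,1\}$, whence $\pd_\Lambda D(\Lambda_\Lambda)=\ind\Lambda_\Lambda\le\max\{d,1\}$. To bound the injective dimension of $\Lambda$ as a left module I would not reprove a dual statement but observe that $\Lambda^{op}$ is the simple gluing algebra of $A^{op}$ and $B^{op}$ (the defining path condition is self-dual), that $\Gd A^{op}=\Gd A$ and $\Gd B^{op}=\Gd B$, and that right $\Lambda^{op}$-modules are exactly left $\Lambda$-modules; applying the bound just obtained to $\Lambda^{op}$ gives $\ind_\Lambda\Lambda\le\max\{d,1\}$. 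Thus $\Lambda$ is Gorenstein with $\Gd\Lambda\le\max\{\Gd A,\Gd B,1\}$.

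For the converse, assume $\Lambda$ is Gorenstein with $\Gd\Lambda=d'$. Dually to the identity $i_\mu i_\lambda M\cong M\oplus P(a)^{\oplus m}$, each injective $A$-module $I(i)$ is a direct summand of $i_\mu V(i)=i_\mu i_\rho I(i)$. Since $V(i)$ is injective and $\Lambda$ is Gorenstein, $\pd_\Lambda V(i)\le d'$, and as $i_\mu$ is exact and preserves projectives we get $\pd_A i_\mu V(i)\le d'$; hence $\pd_A I(i)\le d'$ and $\ind\Lambda_A\le\pd_A D(A_A)\le d'$, that is $\ind A_A\le d'$. Running the same argument for $\Lambda^{op}$ and $A^{op}$ gives $\ind_A A\le d'$, so $A$ is Gorenstein with $\Gd A\le d'$; the argument for $B$ is symmetric. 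This gives $\max\{\Gd A,\Gd B\}\le\Gd\Lambda$ and, combined with the previous paragraph, both the equivalence and the two-sided estimate.

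The genuine content, and the step I expect to be most delicate, is the bookkeeping of one-sidedness: the Gorenstein condition asks for finite injective dimension of $\Lambda$ as a left \emph{and} as a right module, whereas Lemma \ref{lemma existence of short exact sequene 1} and the adjunctions are available on one side only. I would resolve this uniformly through the opposite-algebra device above, noting that $\Lambda^{op}$ is again a simple gluing algebra and that $D$ exchanges projective with injective dimensions, rather than establishing a separate dual of Lemma \ref{lemma existence of short exact sequene 1}. The only other point requiring care is that the summand relations $i_\mu i_\lambda M\cong M\oplus P(a)^{\oplus m}$ and its injective dual $i_\mu i_\rho I(i)\cong I(i)\oplus I(a)^{\oplus m}$ hold exactly as stated, so that the restrictions of the $\Lambda$-projectives and $\Lambda$-injectives really contain the $A$- and $B$-(co)generators as direct summands; everything else is a routine dimension shift along the single short exact sequence.
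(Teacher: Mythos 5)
Your proof is correct, and its core coincides with the paper's: both directions bound $\pd_\Lambda V(i)$ by restricting along $i_\mu,j_\mu$ (which preserve injectives), inducing back up along $i_\lambda,j_\lambda$ (which are exact and preserve projectives), and performing a dimension shift along the short exact sequence of Lemma \ref{lemma existence of short exact sequene 1}, whose projective kernel $U(v)^{\oplus\dim M_v}$ accounts for the extra $1$; and both obtain the converse from the summand relation $i_\mu V(i)= i_\mu i_\rho I(i)\cong I(i)\oplus I(a)^{\oplus s}$. The one genuine difference is your treatment of two-sidedness. The paper settles $\ind_\Lambda\Lambda$ and $\ind_A A$ with the word ``similarly'', which tacitly requires dual arguments (the right adjoints $i_\rho,j_\rho$ and a dual version of Lemma \ref{lemma existence of short exact sequene 1}); you instead reduce everything to the one-sided bound already proved by observing that $\Lambda^{op}$ is again a simple gluing algebra, namely of $A^{op}$ and $B^{op}$, and that passing to the opposite algebra preserves Gorensteinness and Gorenstein dimension. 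This buys a uniform argument that never needs the dual short exact sequence and makes precise exactly the step the paper leaves implicit; the small price, which you pay explicitly, is the verification that the finiteness condition on the gluing (all nontrivial paths from $a$ to $a$ lying in $I_A$) is self-dual under passage to opposite quivers, so that the construction and Lemma \ref{lemma existence of short exact sequene 1} really are available for $\Lambda^{op}$.
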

\begin{proof}
Keep the notations as above. If $\Lambda$ is Gorenstein, then for any indecomposable injective $\Lambda$-module $V(i)$, it has finite projective dimension. Let
$$0\rightarrow U_m \rightarrow \cdots U_1\rightarrow U_0\rightarrow V(i)\rightarrow0$$
be a projective resolution of $V(i)$. Apply $i_\mu$ to the above projective resolution. Since $i_\mu$ is exact and preserves projectives, we get that $\pd_A (i_\mu(V(i)))\leq m$.
It is easy to see that $i_\mu(V(i))= I(i)\oplus I(a)^{\oplus s}$ for some integer $s$. Then $\pd_A (I(i))\leq m$, and $\pd_A D(A)<\infty$. Similarly, we can get that $\ind_A A<\infty$ and so $A$ is Gorenstein. Furthermore, $\Gd A \leq \Gd \Lambda$.

Similarly, $B$ is Gorenstein with $\Gd B\leq \Gd\Lambda$. Then $\max\{\Gd A,\Gd B\}\leq \Gd\Lambda$.

Conversely, for any indecomposable injective $\Lambda$-module $V(i)$, $i_\mu (V(i))$ and $j_\mu (V(i))$ are injective as $A$-module and $B$-module respectively.
Since $A$ and $B$ are Gorenstein, $i_\mu (V(i))$ and $j_\mu(V(i))$ have finite projection dimensions. From $i_\lambda$ and $j_\lambda$ are exact and preserve projectives, we get that
$i_\lambda i_\mu(V(i))$ and $j_\lambda j_\mu(V(i))$ have finite projection dimensions. In particular,
\begin{eqnarray*}
&&\pd_\Lambda i_\lambda i_\mu (V(i)) \leq \pd_A i_\mu (V(i))\leq \Gd A \mbox{ and}\\
 &&\pd_\Lambda j_\lambda j_\mu(V(i))\leq \pd_B j_\mu(V(i))\leq \Gd B.
\end{eqnarray*}
So $\pd _\Lambda (i_\lambda i_\mu (V(i)) \oplus j_\lambda j_\mu(V(i)))\leq \max\{\Gd A,\Gd B\}$.

From the short exact sequence (\ref{equation short exact sequence 1}) in Lemma \ref{lemma existence of short exact sequene 1}, it is easy to see that
$$\pd_\Lambda V(i)\leq \max\{\Gd A,\Gd B,1\}.$$
So $\pd_\Lambda D\Lambda\leq \max\{\Gd A,\Gd B,1\}$.

Similarly,
we can get that $$\ind_\Lambda \Lambda \leq \max\{\Gd A,\Gd B,1\}.$$
So $\Lambda$ is Gorenstein, and $\max\{\Gd A,\Gd B\}\leq\Gd (\Lambda)\leq \max\{\Gd A,\Gd B,1\}$.
\end{proof}

From the above Proposition, if $A$ and $B$ are Gorenstein, then we have the following:
if $\Gd A=0=\Gd B$, then $\Gd(\Lambda)\leq 1$; otherwise, $\Gd (\Lambda)= \max\{\Gd A,\Gd B\}$.

\begin{example}
Let $A$ and $B$ be self-injective ($0$-Gorenstein) algebras, and $\Lambda$ be the simple gluing algebra of $A$ and $B$. In general, $\Lambda$ is $1$-Gorenstein, and is not self-injective.
For an example, please see the algebra $\Lambda$ in Example \ref{example Auslander algebra}.
\end{example}

\begin{example}\label{example 1}
For simple gluing algebra $\Lambda= KQ/I$ of $A$ and $B$, the condition that $I$ is generated by that of $I_A$ and $I_B$ is necessary to ensure the above proposition.

Let $Q_\Lambda$ be the quiver $\xymatrix{1 \ar@/^/[r]^{\alpha} & 2 \ar@/^/[l]^\beta & 3 \ar[l]_\gamma   }$, $Q_A$ be the full subquiver of $Q_\Lambda$ containing vertices $1,2$, and $Q_B$ be the full subquiver of $Q_\Lambda$ containing the vertices $2,3$. Then $Q_\Lambda$ is a simple gluing quiver of $Q_A$ and $Q_B$. Let $I_A=\langle \alpha\beta,\beta\alpha \rangle$, $I_B=0$. Then both $A$ and $B$ are Gorenstein. However, if we take $\Lambda=KQ_\Lambda/I_\Lambda$, where $I_\Lambda$ is generated by $\alpha\beta,\beta\alpha$ and $\beta\gamma$, then $\Lambda$ is not Gorenstein any more.
\end{example}

We recall two basic facts concerning functors between triangulated categories, which are useful to prove our main results.

\begin{lemma}[see. e.g. \cite{chen2}]\label{lemma functor to derived functor}
Let $F_1:\ca\rightarrow\cb$ be an exact functor between abelian categories which has an exact right adjoint $F_2$. Then the pair $(D^b(F_1),D^b(F_2))$ is adjoint, where $D^b(F_1)$ is the induced functor from $D^b(\ca)$ to $D^b(\cb)$ ($D^b(F_2)$ is defined similarly).
\end{lemma}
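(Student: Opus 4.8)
The plan is to build the adjunction on the derived level directly out of the abelian adjunction $(F_1,F_2)$ by applying everything termwise; the key point is that exactness makes the induced derived functors the naive termwise extensions, so no resolutions intervene. First I would record that, since $F_1$ and $F_2$ are exact, they preserve cohomology and hence carry quasi-isomorphisms to quasi-isomorphisms; applying them termwise to bounded complexes therefore yields triangle functors $D^b(F_1):D^b(\ca)\to D^b(\cb)$ and $D^b(F_2):D^b(\cb)\to D^b(\ca)$, and one has $D^b(F_2)\circ D^b(F_1)=D^b(F_2F_1)$ and $D^b(F_1)\circ D^b(F_2)=D^b(F_1F_2)$ as termwise functors.

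Next I would transport the unit $\eta:\id_\ca\to F_2F_1$ and the counit $\varepsilon:F_1F_2\to\id_\cb$ of the abelian adjunction. For a bounded complex $X^\bullet$ the maps $\eta_{X^n}$ assemble, by naturality of $\eta$ with respect to the differentials, into a chain map $X^\bullet\to F_2F_1X^\bullet$ that is natural in $X^\bullet$ at the level of complexes; the same construction gives $\varepsilon$ on complexes. Because $F_1,F_2$ send quasi-isomorphisms to quasi-isomorphisms, these termwise transformations are compatible with the localization and descend to natural transformations $\bar\eta:\id_{D^b(\ca)}\to D^b(F_2)D^b(F_1)$ and $\bar\varepsilon:D^b(F_1)D^b(F_2)\to\id_{D^b(\cb)}$. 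Finally, the two triangle identities for $(\bar\eta,\bar\varepsilon)$ are equalities of natural transformations between induced functors; each is obtained termwise from the corresponding triangle identity for $(\eta,\varepsilon)$, which holds in $\ca$ and $\cb$ by hypothesis, so both hold in $D^b$ as well, and $(D^b(F_1),D^b(F_2))$ is an adjoint pair.

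The step requiring genuine care, and the one I expect to be the main obstacle, is the descent of $\bar\eta$ and $\bar\varepsilon$: checking that they are well-defined natural transformations with respect to morphisms of $D^b$, which are fractions (roofs) $X^\bullet\xleftarrow{s}Z^\bullet\xrightarrow{f}Y^\bullet$ with $s$ a quasi-isomorphism, rather than honest chain maps. One must verify that the naturality square for $\eta$ (resp. $\varepsilon$) stays commutative after inverting $s$, and this is exactly where exactness of $F_1,F_2$ is used, since $D^b(F_i)$ sends the quasi-isomorphism $s$ to a quasi-isomorphism and hence carries the chain-level naturality square for $s$ to a commutative square in $D^b$. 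An equivalent and perhaps cleaner route would be to first show that $(K^b(F_1),K^b(F_2))$ is an adjoint pair on the bounded homotopy categories, using that the termwise adjunction isomorphism $\Hom(F_1X^\bullet,Y^\bullet)\cong\Hom(X^\bullet,F_2Y^\bullet)$ respects the differential and sends null-homotopic maps to null-homotopic maps, and then to invoke the standard fact that an adjunction between triangulated categories descends to the Verdier quotients $D^b(\ca)=K^b(\ca)/K^b_{\mathrm{ac}}(\ca)$ and $D^b(\cb)=K^b(\cb)/K^b_{\mathrm{ac}}(\cb)$ as soon as both functors preserve the acyclic subcategories, which again follows from exactness.
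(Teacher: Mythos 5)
Your argument is correct, but note that the paper does not actually prove Lemma \ref{lemma functor to derived functor}: it is quoted as a known fact from \cite{chen2}, so there is no in-paper proof to compare against, and your proposal should be judged on its own. Both of your routes are sound. The first (termwise unit and counit, then descent along the localization) correctly isolates the only delicate point, namely naturality with respect to roofs, and your justification is right: exactness of $F_1$ and $F_2$ guarantees that quasi-isomorphisms are sent to quasi-isomorphisms, so in the chain-level naturality square for a quasi-isomorphism $s$ both images of $s$ become invertible in $D^b$, and naturality for the fraction $f\circ s^{-1}$ follows formally; the triangle identities then descend because the components of $\bar\eta$ and $\bar\varepsilon$ are the images under the localization functor of the chain-level components, where the identities already hold. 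Your second route is in fact the one that best matches the paper's own toolkit: the termwise adjunction isomorphism respects differentials and homotopies by naturality, giving an adjoint pair $(K^b(F_1),K^b(F_2))$ on homotopy categories, and the descent to the Verdier quotients is precisely the paper's Lemma \ref{lemma adjoint} applied with $\cc=K^b(\ca)$, $\cd=K^b(\cb)$, $\cm=K^b_{\mathrm{ac}}(\ca)$, $\cn=K^b_{\mathrm{ac}}(\cb)$; the hypotheses $F_1(\cm)\subseteq\cn$ and $F_2(\cn)\subseteq\cm$ are exactly the statement that exact functors preserve acyclic complexes. Either version is a complete proof of the lemma that the paper leaves to the literature.
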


Since all the six functors defined above are exact, so they induces six triangulated functors on derived categories, and $(D^b(i_\lambda), D^b(i_\mu),D^b(i_\rho))$ and
 $(D^b(j_\lambda), D^b(j_\mu),D^b(j_\rho))$ are adjoint triples. Since $i_\lambda$, $i_\mu$, $j_\lambda$ and $j_\mu$ preserve projectives, they induce triangulated functors on singularity categories, which are denoted by $\tilde{i}_\lambda$, $\tilde{i}_\mu$, $\tilde{j}_\lambda$ and $\tilde{j}_\mu$ respectively.

The second one on adjoint functors is well-known.
\begin{lemma}[\cite{Or1, Chen1}]\label{lemma adjoint}
Let $\cm$ and $\cn$ be full triangulated subcategories in triangulated categories $\cc$ and $\cd$ respectively. Let $F: \cc\to\cd$ and $G: \cd\to\cc$ be an adjoint pair of exact functors such that $F(\cm)\subset \cn$ and $G(\cn)\subset\cm$. Then they induce functors
\[\ow{F}: \cc/\cm\to \cd/\cn, \indent \ow{G}: \cd/\cn\to\cc/\cm
\]which are adjoint.
\end{lemma}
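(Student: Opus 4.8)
The plan is to first produce the two induced functors from the universal property of Verdier localization, and then to descend the unit and counit of the given adjunction $F\dashv G$ along the quotient functors. Write $Q_\cc\colon\cc\to\cc/\cm$ and $Q_\cd\colon\cd\to\cd/\cn$ for the canonical quotient functors. First I would check that $\ow F$ exists: the composite $Q_\cd\circ F\colon\cc\to\cd/\cn$ is a triangulated functor which annihilates $\cm$, since $F(\cm)\subseteq\cn$ and $Q_\cd$ sends every object of $\cn$ to a zero object. By the universal property of the Verdier quotient $\cc/\cm$, there is then a unique triangulated functor $\ow F\colon\cc/\cm\to\cd/\cn$ with $\ow F\circ Q_\cc=Q_\cd\circ F$. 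The functor $\ow G$ is obtained in the same way from $G(\cn)\subseteq\cm$. Because $Q_\cc$ and $Q_\cd$ are the identity on objects, these identities give $\ow F(X)=F(X)$ and $\ow G(Y)=G(Y)$ on objects, and hence $\ow G\,\ow F\,Q_\cc=Q_\cc\,GF$ and $\ow F\,\ow G\,Q_\cd=Q_\cd\,FG$ as functors.

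Next I would build the unit and counit of the induced adjunction. Let $\eta\colon\id_\cc\to GF$ and $\varepsilon\colon FG\to\id_\cd$ be the unit and counit of $F\dashv G$. I would set $\ow\eta_X:=Q_\cc(\eta_X)$ and $\ow\varepsilon_Y:=Q_\cd(\varepsilon_Y)$, viewed as morphisms in $\cc/\cm$ and $\cd/\cn$ respectively; by the object identities above these already have the right source and target, $\ow\eta_X\colon X\to\ow G\,\ow F(X)$ and $\ow\varepsilon_Y\colon\ow F\,\ow G(Y)\to Y$. The key point is to verify that $\ow\eta$ and $\ow\varepsilon$ are natural with respect to all morphisms of the quotient, not only those coming from $\cc$ and $\cd$. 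Every morphism $\phi\colon X\to Y$ in $\cc/\cm$ is a fraction $\phi=Q_\cc(f)\circ Q_\cc(s)^{-1}$ represented by a roof $X\xleftarrow{s}X'\xrightarrow{f}Y$ with $\cone(s)\in\cm$, so that $Q_\cc(s)$ is invertible. Using the naturality of $\eta$ in $\cc$ for the two honest morphisms $s$ and $f$ together with functoriality of $Q_\cc$, one rewrites $\ow G\,\ow F(\phi)\circ\ow\eta_X=Q_\cc(GF(f))\,Q_\cc(GF(s))^{-1}\,Q_\cc(\eta_X)$ first as $Q_\cc(GF(f))\,Q_\cc(\eta_{X'})\,Q_\cc(s)^{-1}$ and then as $Q_\cc(\eta_Y)\circ\phi=\ow\eta_Y\circ\phi$, which is exactly the required commutativity. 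The argument for $\ow\varepsilon$ is dual.

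Finally, the two triangle identities for $(\ow\eta,\ow\varepsilon)$ follow immediately by applying the triangulated functors $Q_\cc$ and $Q_\cd$ to the corresponding triangle identities for $(\eta,\varepsilon)$: for instance $Q_\cd(\varepsilon_{FX})\circ Q_\cd(F(\eta_X))=Q_\cd(\varepsilon_{FX}\circ F(\eta_X))=Q_\cd(\id_{FX})=\id$, and symmetrically on the other side, so the zig-zag equations hold in the quotients. Hence $\ow F$ is left adjoint to $\ow G$, which is the assertion.

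I expect the only genuine obstacle to be the naturality check in the middle step: descending a natural transformation along a Verdier localization forces one to test naturality against roof morphisms $Q_\cc(f)\,Q_\cc(s)^{-1}$, and it is there that the calculus of fractions (and the invertibility of $Q_\cc(s)$ for $\cone(s)\in\cm$) is actually used. Everything else — the construction of $\ow F,\ow G$ and the triangle identities — is a formal consequence of the universal property and the functoriality of $Q_\cc$ and $Q_\cd$.
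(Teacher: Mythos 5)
Your proof is correct. Note that the paper gives no proof of this lemma at all --- it is stated as a known result quoted from \cite{Or1, Chen1} --- so there is no internal argument to compare against; your construction (inducing $\widetilde{F}$ and $\widetilde{G}$ via the universal property of the Verdier quotient, descending the unit and counit along the quotient functors $Q_{\mathcal{C}}$, $Q_{\mathcal{D}}$, checking naturality against roof morphisms $Q_{\mathcal{C}}(f)\,Q_{\mathcal{C}}(s)^{-1}$, and then deducing the triangle identities by applying the quotient functors to those of $(F,G)$) is precisely the standard proof found in the cited references, and you have correctly isolated the only delicate point, namely the naturality check on fractions.
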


\begin{lemma}\label{lemma adjoint functor singularity category}
Keep the notations as above. Then the following hold:

(i) $(\tilde{i}_\lambda,\tilde{i}_\mu)$, $(\tilde{j}_\lambda,\tilde{j}_\mu)$ are adjoint pairs;

(ii) $\tilde{i}_\lambda$ and $\tilde{j}_\lambda$ are fully faithful;

(iii) $\ker(\tilde{j}_\mu)\simeq \Im (\tilde{i}_\lambda)$ and $\ker (\tilde{i}_\mu)\simeq \Im(\tilde{j}_\lambda)$.
\end{lemma}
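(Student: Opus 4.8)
\emph{Overview and part (i).} The plan is to handle the three items in turn, in each case reducing a statement about the induced triangulated functors on singularity categories to an elementary statement about the underlying module functors, where the explicit descriptions recorded above (in particular $i_\mu i_\lambda M\cong M\oplus P(a)^{\oplus m}$, $i_\mu j_\lambda N\cong P(a)^{\oplus n}$ and their $j$-analogues, together with Lemma~\ref{lemma existence of short exact sequene 1}) do the work. For (i) I would simply feed the module-level adjunctions into the two general lemmas on triangulated functors. Since $i_\lambda$ is exact with exact right adjoint $i_\mu$, Lemma~\ref{lemma functor to derived functor} shows that $(D^b(i_\lambda),D^b(i_\mu))$ is an adjoint pair of triangulated functors between $D^b(A)$ and $D^b(\Lambda)$. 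As $i_\lambda$ and $i_\mu$ preserve projectives, $D^b(i_\lambda)$ carries $K^b(\proj A)$ into $K^b(\proj\Lambda)$ and $D^b(i_\mu)$ carries $K^b(\proj\Lambda)$ into $K^b(\proj A)$; Lemma~\ref{lemma adjoint} then yields the induced adjoint pair $(\tilde i_\lambda,\tilde i_\mu)$ on the Verdier quotients, which are exactly the singularity categories. The pair $(\tilde j_\lambda,\tilde j_\mu)$ is treated identically.

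\emph{Part (ii).} A left adjoint is fully faithful precisely when the unit of the adjunction is invertible, so it suffices to show that the unit $\eta\colon\mathrm{id}_{D_{sg}(A)}\to\tilde i_\mu\tilde i_\lambda$ is a natural isomorphism. On a module $M$ the unit is induced by the section $\mu_M\colon M\to i_\mu i_\lambda M\cong M\oplus P(a)^{\oplus m}$; since $P(a)$ is projective it becomes zero in $D_{sg}(A)$, so $\eta_M$ is invertible there. The class of objects on which $\eta$ is invertible is a thick subcategory of $D_{sg}(A)$, and $D_{sg}(A)$ is generated as a triangulated category by the stalk complexes of modules, so $\eta$ is invertible everywhere and $\tilde i_\lambda$ is fully faithful. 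The same argument, using that $j_\mu j_\lambda L$ differs from $L$ only by a projective summand, gives full faithfulness of $\tilde j_\lambda$.

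\emph{Part (iii).} I would first record the two one-sided vanishings. From $i_\mu j_\lambda N\cong P(a)^{\oplus n}$ and its analogue $j_\mu i_\lambda M\cong Q(b)^{\oplus m}$, the composites $i_\mu j_\lambda$ and $j_\mu i_\lambda$ take values in projective modules, hence $\tilde i_\mu\tilde j_\lambda=0$ and $\tilde j_\mu\tilde i_\lambda=0$; this gives the inclusions $\Im(\tilde i_\lambda)\subseteq\ker(\tilde j_\mu)$ and $\Im(\tilde j_\lambda)\subseteq\ker(\tilde i_\mu)$. For the reverse inclusions I would push the short exact sequence (\ref{equation short exact sequence 1}) of Lemma~\ref{lemma existence of short exact sequene 1} into $D_{sg}(\Lambda)$: it becomes a triangle whose first term $U(v)^{\oplus\dim M_v}$ is projective and therefore vanishes, so the map $(\mu_M,\nu_M)$ yields a natural isomorphism
\[\tilde i_\lambda\tilde i_\mu X\oplus\tilde j_\lambda\tilde j_\mu X\xrightarrow{\ \sim\ }X\]
for every $\Lambda$-module $X$, and then, by the same thickness and generation argument as in (ii), for every object $X$ of $D_{sg}(\Lambda)$. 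If $X\in\ker(\tilde j_\mu)$, then $\tilde j_\lambda\tilde j_\mu X=0$ and the isomorphism reads $X\cong\tilde i_\lambda(\tilde i_\mu X)\in\Im(\tilde i_\lambda)$; symmetrically $\ker(\tilde i_\mu)\subseteq\Im(\tilde j_\lambda)$. Combining the two inclusions in each case gives the asserted identifications of subcategories.

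\emph{Main obstacle.} Everything above is formal triangulated-category manipulation once two inputs are secured, and these are where the real content sits. The first is the pair of composite computations $j_\mu i_\lambda M\cong Q(b)^{\oplus m}$ and $i_\mu j_\lambda N\cong P(a)^{\oplus n}$ landing in $\proj B$ and $\proj A$: this is precisely where the local structure of $i_\lambda,j_\lambda$ at the glued vertex and the standing hypothesis that every non-trivial $a$-to-$a$ path of $Q_A$ lies in $I_A$ must be used. The second is checking that the splitting produced by (\ref{equation short exact sequence 1}) is natural enough in $M$ to extend from modules to all of $D_{sg}(\Lambda)$ via a thick-subcategory argument. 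I expect the first point to be the more delicate, since it is the only step that is not purely categorical and genuinely depends on the gluing data.
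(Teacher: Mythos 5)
Your proposal is correct and takes essentially the same route as the paper: (i) by feeding the module-level adjunctions through the two cited lemmas, (ii) by observing that the unit is invertible in $D_{sg}(A)$ because $i_\mu i_\lambda M\cong M\oplus P(a)^{\oplus m}$ with $P(a)$ projective and then extending over all of $D_{sg}(A)$ by generation, and (iii) by the vanishing of the cross-compositions together with the decomposition coming from Lemma \ref{lemma existence of short exact sequene 1}. The only deviation is in the reverse inclusions of (iii), where you apply that decomposition directly to an object of $\ker(\tilde{j}_\mu)$, whereas the paper first forms the triangle on the counit $\tilde{i}_\lambda\tilde{i}_\mu(M)\to M$ and applies the decomposition to its cone; your version is a mild streamlining (and coincides with how the paper itself argues in the proof of Theorem \ref{theorem singularity categories}).
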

\begin{proof}
(i) follows from Lemma \ref{lemma adjoint} directly.

(ii) we only need to prove that $\tilde{i}_\lambda$ is fully faithful.

For any $A$-module $M$, we have that $i_\mu i_\lambda(M)=M \oplus P(a)^{\oplus s}$ for some integer $s$. So $\tilde{i}_\mu \tilde{i}_\lambda(M)\cong M$, in particular, this isomorphism is natural which implies that $\tilde{i}_\mu \tilde{i}_\lambda \simeq \Id_{D_{sg}(A)}$ since $\mod A$ is a generator of $D_{sg}(A)$. Therefore, $\tilde{i}_\lambda$ is fully faithful.

(iii) for any $A$-module $M$, we have that $j_\mu i_\lambda (M)= Q(b)^{\oplus t}$ for some integer $t$. So $\tilde{j}_\mu \tilde{i}_\lambda(M)\cong 0$, and then $\tilde{j}_\mu \tilde{i}_\lambda=0$.
It follows that $\Im (\tilde{i}_\lambda)\subseteq\ker(\tilde{j}_\mu)$. Similarly, we can get that $\Im (\tilde{j}_\lambda)\subseteq\ker(\tilde{i}_\mu)$.

For any $M\in \ker(\tilde{j}_\mu)$, the adjunction $\tilde{i}_\lambda \tilde{i}_\mu (M)\xrightarrow{\epsilon_M} M$ is extended to a triangle in $D_{sg}(\Lambda)$:
$$X\rightarrow\tilde{i}_\lambda \tilde{i}_\mu (M)\xrightarrow{\epsilon_M} M\rightarrow X[1]$$
where $[1]$ is the suspension functor. Applying $\tilde{i}_\mu$ to the above triangle, we get that $X\in \ker (\tilde{i}_\mu)$ since $\tilde{i}_\mu(\epsilon_M)$ is an isomorphism.
Also applying $\tilde{j}_\mu$ to the above triangle, we get that $X\in \ker (\tilde{j}_\mu)$ since $M\in \ker (\tilde{j}_\mu)$ and $\tilde{j}_\mu \tilde{i}_\lambda=0$.

From Lemma \ref{lemma existence of short exact sequene 1}, it is easy to see that $X\cong \tilde{i}_\lambda \tilde{i}_\mu(X) \oplus \tilde{j}_\lambda \tilde{j}_\mu (X)$ in $D_{sg}(\Lambda)$. Together with $X \in \ker (\tilde{i}_\mu) \bigcap\ker (\tilde{j}_\mu)$, we get that $X\cong 0$ in $D_{sg}(\Lambda)$ and then $M\cong \tilde{i}_\lambda \tilde{i}_\mu (M)$, so $M\in \Im(\tilde{i}_\lambda)$. Therefore, $\ker(\tilde{j}_\mu)\simeq \Im (\tilde{i}_\lambda)$ since $\tilde{i}_\lambda$ is fully faithful. Similarly, we can get that $\ker (\tilde{i}_\mu)\simeq \Im(\tilde{j}_\lambda)$.
\end{proof}
It is worth noting that $i_\lambda$ and $j_\lambda$ are faithful functors (in general, not full), which can be obtained from the proof of Lemma \ref{lemma adjoint functor singularity category}.

\begin{theorem}\label{theorem singularity categories}
Let $\Lambda$ be a simple gluing algebra of the two finite-dimensional bound quiver algebras $A$ and $B$. Then $D_{sg}(\Lambda)\simeq D_{sg}(A) \coprod D_{sg}(B)$.
\end{theorem}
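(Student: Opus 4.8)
The plan is to exhibit the equivalence by writing down an explicit pair of mutually quasi-inverse triangulated functors between $D_{sg}(\Lambda)$ and $D_{sg}(A)\coprod D_{sg}(B)$, where the latter is given the componentwise triangulated structure (a triangle being a pair of triangles, one in each factor). On one side I take
\[
F:=(\tilde{i}_\mu,\tilde{j}_\mu):D_{sg}(\Lambda)\rightarrow D_{sg}(A)\coprod D_{sg}(B),\qquad M\mapsto (\tilde{i}_\mu M,\tilde{j}_\mu M),
\]
and on the other
\[
G:D_{sg}(A)\coprod D_{sg}(B)\rightarrow D_{sg}(\Lambda),\qquad (X,Y)\mapsto \tilde{i}_\lambda X\oplus \tilde{j}_\lambda Y.
\]
Since $\tilde{i}_\lambda,\tilde{i}_\mu,\tilde{j}_\lambda,\tilde{j}_\mu$ are all triangulated and a direct sum of triangulated functors is again triangulated, both $F$ and $G$ are triangulated. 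It then suffices to produce natural isomorphisms $F\circ G\simeq \Id$ and $G\circ F\simeq \Id$.

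For $F\circ G$ I would compute in each factor, using the facts recorded in the proof of Lemma \ref{lemma adjoint functor singularity category}: namely $\tilde{i}_\mu\tilde{i}_\lambda\simeq \Id_{D_{sg}(A)}$ and $\tilde{j}_\mu\tilde{j}_\lambda\simeq \Id_{D_{sg}(B)}$, together with $\tilde{j}_\mu\tilde{i}_\lambda=0$ and, by the symmetric argument, $\tilde{i}_\mu\tilde{j}_\lambda=0$ (these are the inclusions $\Im(\tilde{i}_\lambda)\subseteq\ker(\tilde{j}_\mu)$ and $\Im(\tilde{j}_\lambda)\subseteq\ker(\tilde{i}_\mu)$ of part (iii)). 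Hence
\[
FG(X,Y)=\bigl(\tilde{i}_\mu(\tilde{i}_\lambda X\oplus\tilde{j}_\lambda Y),\ \tilde{j}_\mu(\tilde{i}_\lambda X\oplus\tilde{j}_\lambda Y)\bigr)\simeq (X\oplus 0,\ 0\oplus Y)=(X,Y),
\]
and these isomorphisms assemble into a natural isomorphism $F\circ G\simeq \Id$.

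For $G\circ F$ the essential input is Lemma \ref{lemma existence of short exact sequene 1}: for every $\Lambda$-module $M$ the short exact sequence (\ref{equation short exact sequence 1}) has middle term $i_\lambda i_\mu M\oplus j_\lambda j_\mu M$, surjection induced by the counits $\epsilon_M$ and $\zeta_M$ of the adjunctions $(i_\lambda,i_\mu)$ and $(j_\lambda,j_\mu)$, and kernel $U(v)^{\oplus\dim M_v}$. As $U(v)$ is projective it is zero in $D_{sg}(\Lambda)$, so in the singularity category this sequence becomes a triangle exhibiting an isomorphism $\tilde{i}_\lambda\tilde{i}_\mu M\oplus\tilde{j}_\lambda\tilde{j}_\mu M\simeq M$, i.e. $GF(M)\simeq M$ (this is exactly the objectwise statement already used in the proof of Lemma \ref{lemma adjoint functor singularity category}(iii)). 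Combining the two steps, $F$ and $G$ are mutually quasi-inverse triangle equivalences, which proves the theorem.

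I expect the main obstacle to be the \emph{naturality} in this last step: upgrading the objectwise splitting $\tilde{i}_\lambda\tilde{i}_\mu M\oplus\tilde{j}_\lambda\tilde{j}_\mu M\simeq M$ to a natural isomorphism of functors $G\circ F\simeq \Id_{D_{sg}(\Lambda)}$. This amounts to checking that the short exact sequences of Lemma \ref{lemma existence of short exact sequene 1} are natural in $M$, which is where one uses that the defining map is the pair of counit morphisms $(\epsilon_M,\zeta_M)$ (natural by construction) and that their common kernel $U(v)^{\oplus\dim M_v}$ is annihilated in $D_{sg}(\Lambda)$. A minor secondary point is the bookkeeping ensuring that $F$ and $G$ are genuinely exact for the componentwise triangulated structure on $D_{sg}(A)\coprod D_{sg}(B)$ and that the two natural isomorphisms are compatible with the suspension; this is routine once one knows each of $\tilde{i}_\lambda,\tilde{i}_\mu,\tilde{j}_\lambda,\tilde{j}_\mu$ is triangulated.
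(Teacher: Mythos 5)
Your proposal is correct, and it is assembled from exactly the same ingredients as the paper's proof --- the objectwise splitting $M\cong \tilde{i}_\lambda\tilde{i}_\mu M\oplus \tilde{j}_\lambda\tilde{j}_\mu M$ coming from Lemma \ref{lemma existence of short exact sequene 1}, and the relations $\tilde{i}_\mu\tilde{i}_\lambda\simeq\Id$, $\tilde{j}_\mu\tilde{j}_\lambda\simeq\Id$, $\tilde{i}_\mu\tilde{j}_\lambda=0=\tilde{j}_\mu\tilde{i}_\lambda$ established in (the proof of) Lemma \ref{lemma adjoint functor singularity category} --- but it certifies the equivalence by a different device, and the difference matters for what has to be checked. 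You exhibit an explicit quasi-inverse pair $(F,G)$, which obliges you to produce \emph{natural} isomorphisms $FG\simeq\Id$ and $GF\simeq\Id$; as you correctly identify, the delicate point is upgrading the objectwise splitting to a natural isomorphism $GF\simeq\Id$, which requires recognizing the surjection in Lemma \ref{lemma existence of short exact sequene 1} as the pair of counits (your reading is right; the paper's notation $(\mu_M,\nu_M)$ there clashes with its earlier use of $\mu,\nu$ for the units) and checking that these counits are compatible with passage to the Verdier quotient. The paper's proof sidesteps this obstacle entirely: it first proves Hom-orthogonality, $\Hom_{D_{sg}(\Lambda)}(\tilde{i}_\lambda L,\tilde{j}_\lambda N)=0=\Hom_{D_{sg}(\Lambda)}(\tilde{j}_\lambda N,\tilde{i}_\lambda L)$, via adjunction and $\tilde{i}_\mu\tilde{j}_\lambda=0$, and then combines this with full faithfulness of $\tilde{i}_\lambda,\tilde{j}_\lambda$ (Lemma \ref{lemma adjoint functor singularity category}(ii)). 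With those two facts, your functor $G:(X,Y)\mapsto \tilde{i}_\lambda X\oplus\tilde{j}_\lambda Y$ is fully faithful (orthogonality kills the cross terms in $\Hom(\tilde{i}_\lambda X\oplus\tilde{j}_\lambda Y,\ \tilde{i}_\lambda X'\oplus\tilde{j}_\lambda Y')$) and essentially surjective (here the \emph{objectwise} splitting suffices), hence an equivalence --- no naturality of the splitting is ever invoked. So your route buys an explicit inverse $F=(\tilde{i}_\mu,\tilde{j}_\mu)$ at the price of the naturality verification, while the paper trades the explicit inverse for the fully-faithful-plus-essentially-surjective criterion, which consumes only objectwise data. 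Both arguments, yours and the paper's, rest on the same glossed background fact: every object of a singularity category is isomorphic to a shifted stalk complex of a module, which is what allows statements proved for modules (Lemma \ref{lemma existence of short exact sequene 1}, the vanishing $\tilde{j}_\mu\tilde{i}_\lambda=0$, etc.) to be promoted to all objects of $D_{sg}(\Lambda)$.
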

\begin{proof}
For any $L\in D_{sg}(A),N\in D_{sg}(B)$, $\Hom_{D_{sg}(\Lambda)}( \tilde{i}_\lambda (L), \tilde{j}_\lambda(N))\cong \Hom_{D_{sg}(A)}(L, \tilde{i}_\mu \tilde{j}_\lambda(N))=0$ since $\tilde{i}_\mu \tilde{j}_\lambda=0$.
Similarly, $\Hom_{D_{sg}(\Lambda)}( \tilde{j}_\lambda(N),\tilde{i}_\lambda (L))=0$.

For any $M\in D_{sg}(\Lambda)$, from Lemma \ref{lemma existence of short exact sequene 1}, we get that $M\cong \tilde{i}_\lambda \tilde{i}_\mu(M) \oplus \tilde{j}_\lambda \tilde{j}_\mu (M)$ in $D_{sg}(\Lambda)$.
Together with the fully faithfulness of $\tilde{i}_\lambda$ and $\tilde{j}_\lambda$, we get that $D_{sg}(\Lambda)\simeq  \Im(\tilde{i}_\lambda) \coprod \Im(\tilde{j}_\lambda)\simeq D_{sg}(A) \coprod D_{sg}(B)$.
\end{proof}

\begin{example}\label{example 2}
For simple gluing algebra $\Lambda= KQ_\Lambda/I_\Lambda$ of $A$ and $B$, the condition that $I_\Lambda$ is generated by $I_A$ and $I_B$ is necessary to ensure the above theorem.

Let $Q_A,Q_B$ and $Q_\Lambda$ be the quivers as the following figure shows.
Then $Q_\Lambda$ is a simple gluing quiver of $Q_A$ and $Q_B$.
Let $A=KQ_A/ \langle  \varepsilon_1^2\rangle$, $B=KQ_B/\langle \varepsilon_2^2 \rangle$.
If $\Lambda=KQ_\Lambda/I_\Lambda$, where $I_\Lambda$ is generated by
$\varepsilon_1^2,\varepsilon_2^2, \varepsilon_1\alpha-\alpha \varepsilon_2$, then all these algebras are Gorenstein algebras.
It is easy to see that $D_{sg}(A)\simeq D_{sg}(B)$, and $D_{sg}(\Lambda)\simeq D^b(KQ)/ [1]$, where $Q$ is a quiver of type $\A_2$, and $[1]$ is the suspension functor, see \cite{RZ}. However, $D_{sg}(\Lambda)$ is not equivalent to $D_{sg}(A)\coprod D_{sg}(B)$.
\begin{center}\setlength{\unitlength}{0.7mm}
 \begin{picture}(50,10)(0,10)
 \put(5,0){\begin{picture}(50,10)
\put(0,-2){$b$}

\qbezier(-1,1)(-3,3)(-2,5.5)
\qbezier(-2,5.5)(1,9)(4,5.5)
\qbezier(4,5.5)(5,3)(3,1)
\put(3.1,1.4){\vector(-1,-1){0.3}}

\put(1,10){$\varepsilon_2$}

\end{picture}}

 \put(40,0){\begin{picture}(50,10)
\put(0,-2){$1$}
\put(18,0){\vector(-1,0){14}}
\put(10,-4){$\alpha$}
\put(20,-2){$2$}

\qbezier(-1,1)(-3,3)(-2,5.5)
\qbezier(-2,5.5)(1,9)(4,5.5)
\qbezier(4,5.5)(5,3)(3,1)
\put(3.1,1.4){\vector(-1,-1){0.3}}

\qbezier(19,1)(17,3)(18,5.5)
\qbezier(18,5.5)(21,9)(24,5.5)
\qbezier(24,5.5)(25,3)(23,1)
\put(23.1,1.4){\vector(-1,-1){0.3}}

\put(1,10){$\varepsilon_1$}
\put(21,10){$\varepsilon_2$}

\end{picture}}

 \put(-40,0){\begin{picture}(50,10)
\put(0,-2){$1$}
\put(18,0){\vector(-1,0){14}}
\put(10,-4){$\alpha$}
\put(20,-2){$a$}

\qbezier(-1,1)(-3,3)(-2,5.5)
\qbezier(-2,5.5)(1,9)(4,5.5)
\qbezier(4,5.5)(5,3)(3,1)
\put(3.1,1.4){\vector(-1,-1){0.3}}

\put(1,10){$\varepsilon_1$}

\end{picture}}

\put(-50,-13){Figure 3. The quivers $Q_A$, $Q_B$ and $Q_\Lambda$ in Example \ref{example 2}. }
\end{picture}

\vspace{1.9cm}
\end{center}
\end{example}

\begin{lemma}[\cite{Lu}]\label{lemma adjoint preserves Gorenstein projectives}
Let $A_1$ and $A_2$ be Artin algebras. If $F_1:\mod A_1\rightarrow \mod A_2$ has the property that it is exact, preserves projective objects, and admits a right adjoint functor $F_2$, then

(i) for any $X\in\mod A_1$ and $Y\in \mod A_2$, we have that $\Ext_{A_2}^k(F_1(X),Y) \cong \Ext_{A_1}^k (X,F_2(Y))$ for any $k\geq1$.

(ii) if $\pd F_2(Q)<\infty$ or $\ind F_2(Q)<\infty$ for any indecomposable projective $A_2$-module $Q$, then $F_1(\Gp(A_1)) \subseteq \Gp(A_2)$.

\end{lemma}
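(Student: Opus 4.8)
The plan is to establish (i) by a direct comparison of $\Ext$ via resolutions and the adjunction, and then to bootstrap (ii) from (i) together with the Ext-vanishing for Gorenstein projective modules recorded earlier.

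For (i), I would start with a projective resolution $P^\bullet\to X$ in $\mod A_1$. Because $F_1$ is exact and preserves projectives, $F_1(P^\bullet)\to F_1(X)$ is a projective resolution of $F_1(X)$ in $\mod A_2$, so $\Ext^k_{A_2}(F_1(X),Y)$ is computed as the cohomology of $\Hom_{A_2}(F_1(P^\bullet),Y)$. The adjunction isomorphism $\Hom_{A_2}(F_1(-),Y)\cong\Hom_{A_1}(-,F_2(Y))$ is natural, hence yields an isomorphism of complexes $\Hom_{A_2}(F_1(P^\bullet),Y)\cong\Hom_{A_1}(P^\bullet,F_2(Y))$. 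Taking cohomology and using that $P^\bullet$ is a projective resolution of $X$ gives $\Ext^k_{A_2}(F_1(X),Y)\cong\Ext^k_{A_1}(X,F_2(Y))$ for all $k\geq1$. I note that this step needs only exactness and preservation of projectives for $F_1$, and no exactness of $F_2$.

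For (ii), I would take $G\in\Gp(A_1)$ and a totally acyclic complex $P^\bullet$ of projective $A_1$-modules with $G\cong\Ker d^0$. Applying $F_1$ produces a complex $F_1(P^\bullet)$ that is acyclic (by exactness) and consists of projective $A_2$-modules (by preservation of projectives), with $F_1(G)\cong\Ker F_1(d^0)$. What remains is to show $F_1(P^\bullet)$ is totally acyclic, i.e. that $\Hom_{A_2}(F_1(P^\bullet),A_2)$ is acyclic; by the same naturality of the adjunction as in (i), this complex is isomorphic to $\Hom_{A_1}(P^\bullet,F_2(A_2))$, so it suffices to prove the latter is acyclic.

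To do this I would write $A_2=\bigoplus_j Q_j$ with the $Q_j$ indecomposable projective, so $F_2(A_2)=\bigoplus_j F_2(Q_j)$, and invoke the hypothesis that each $F_2(Q_j)$ has finite projective or finite injective dimension. Since $P^\bullet$ is totally acyclic, each image $M_i:=\Im d^i$ is Gorenstein projective, and a Gorenstein projective module has vanishing higher $\Ext$ against any module of finite projective or finite injective dimension; hence $\Ext^{\geq1}_{A_1}(M_i,F_2(A_2))=0$. Applying $\Hom_{A_1}(-,F_2(A_2))$ to the short exact sequences $0\to M_{i-1}\to P^i\to M_i\to0$ and using this vanishing produces short exact sequences of the corresponding $\Hom$-groups; splicing them (a dimension-shifting argument) shows $\Hom_{A_1}(P^\bullet,F_2(A_2))$ is acyclic. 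Therefore $F_1(P^\bullet)$ is totally acyclic and $F_1(G)\in\Gp(A_2)$. The routine parts are (i) and the reduction in (ii) to this acyclicity; the crux is the acyclicity itself, which is exactly where the finiteness of $\pd F_2(Q)$ or $\ind F_2(Q)$ is indispensable---without it the relevant $\Ext$-groups need not vanish and $F_1(G)$ may fail to be Gorenstein projective.
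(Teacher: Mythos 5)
Your proof is correct. Note that the paper itself gives no argument for this lemma --- it is quoted from \cite{Lu} (a separate paper, listed as ``preparing'') --- so there is nothing in-text to compare against; your argument is the standard one that such a citation would contain: part (i) is the chain-level adjunction isomorphism $\Hom_{A_2}(F_1(P^\bullet),Y)\cong\Hom_{A_1}(P^\bullet,F_2(Y))$ applied to a projective resolution, and part (ii) correctly reduces total acyclicity of $F_1(P^\bullet)$ to acyclicity of $\Hom_{A_1}(P^\bullet,F_2(A_2))$, which follows from the Ext-vanishing of the Gorenstein projective images $\Im d^i$ against the modules $F_2(Q_j)$ of finite projective or injective dimension (Lemma 2.1(ii)--(iii) of the paper), spliced together by the dimension-shifting argument you describe.
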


\begin{lemma}\label{lemma the upper four functors presereve Gorenstein projectives}
Keep the notations as above. Then $i_\lambda$, $i_\mu$, $j_\lambda$ and $j_\mu$ preserve Gorenstein projective modules.
\end{lemma}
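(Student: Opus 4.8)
The plan is to deduce all four assertions from Lemma \ref{lemma adjoint preserves Gorenstein projectives}, applied to the four adjoint pairs $(i_\lambda,i_\mu)$, $(j_\lambda,j_\mu)$, $(i_\mu,i_\rho)$ and $(j_\mu,j_\rho)$. In each case the functor $F_1$ whose effect on $\Gp$ we want (namely $i_\lambda$, $j_\lambda$, $i_\mu$, $j_\mu$ respectively) is exact and preserves projectives, and its partner $F_2$ (namely $i_\mu$, $j_\mu$, $i_\rho$, $j_\rho$) is the associated right adjoint. So the only thing to check is the homological hypothesis in part (ii) of that lemma: that $F_2$ sends every indecomposable projective of the target algebra to a module of finite projective or finite injective dimension.

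For $i_\lambda$ and $j_\lambda$ this is immediate. Here $F_2$ is $i_\mu$ (resp.\ $j_\mu$), which preserves projectives, so it carries each indecomposable projective $\Lambda$-module $U(i)$ to a projective $A$-module (resp.\ $B$-module), of projective dimension $0$. Thus Lemma \ref{lemma adjoint preserves Gorenstein projectives}(ii) applies and gives $i_\lambda(\Gp A)\subseteq\Gp\Lambda$ and $j_\lambda(\Gp B)\subseteq\Gp\Lambda$.

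The cases $i_\mu$ and $j_\mu$ are the substantial ones. Here $F_2=i_\rho$ (resp.\ $j_\rho$), and the hypothesis of Lemma \ref{lemma adjoint preserves Gorenstein projectives}(ii) becomes the claim that for every indecomposable projective $A$-module $P(i)$ the coinduced module $i_\rho(P(i))=\Hom_A({}_A\Lambda,P(i))$ has finite projective or finite injective dimension over $\Lambda$ (and symmetrically for $j_\rho(Q(i))$). Since $i_\rho$ is additive and ${}_AA=\bigoplus_{i\in Q_A}P(i)$, it suffices to bound one module, $i_\rho({}_AA)=\Hom_A({}_A\Lambda,{}_AA)$: granting this, the adjunction isomorphism $\Hom_A(i_\mu U^\bullet,A)\cong\Hom_\Lambda(U^\bullet,i_\rho{}_AA)$, together with the facts that the images of the differentials of a totally acyclic complex $U^\bullet$ are Gorenstein projective and that $\Ext^{k}_\Lambda(G,L)=0$ for $k>0$ whenever $G$ is Gorenstein projective and $L$ has finite projective or finite injective dimension, shows that the restriction $i_\mu U^\bullet$ of a totally acyclic complex is again totally acyclic; as $i_\mu$ sends projectives to projectives, this is exactly the statement that $i_\mu$ preserves $\Gp$. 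I expect this finiteness to be the main obstacle.

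To settle it I would compute $i_\rho(P(i))$ explicitly as a representation of $(Q,I)$ from the coinduction formula and the local pictures in Figures 1 and 2, and then exhibit a projective presentation. In the worked examples the first syzygy is already projective, so I anticipate the sharp bound $\pd_\Lambda i_\rho(P(i))\le 1$, which matches the $+1$ appearing in $\Gd\Lambda\le\max\{\Gd A,\Gd B,1\}$ of Proposition \ref{lemma simple gluing Nakayama algebra Gorenstein}; the finite-dimensionality hypothesis (no nontrivial path $a\to a$ survives in $A$, i.e.\ $e_v A e_v=K$) should be precisely what forces the presentation to terminate. A clean shortcut is available whenever $A$ is Gorenstein: applying the exact, injective-preserving functor $i_\rho$ to a finite injective resolution of ${}_AA$ gives at once $\ind_\Lambda i_\rho({}_AA)\le\ind_A A<\infty$, so the real work is to remove the Gorenstein hypothesis via the explicit resolution above. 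The treatment of $j_\mu$ through $j_\rho$ is identical after interchanging the roles of $A$ and $B$.
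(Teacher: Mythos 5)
Your treatment of $i_\lambda$ and $j_\lambda$ is correct and coincides exactly with the paper's own argument: Lemma \ref{lemma adjoint preserves Gorenstein projectives}(ii) applies to the pairs $(i_\lambda,i_\mu)$ and $(j_\lambda,j_\mu)$ because $i_\mu$, $j_\mu$ carry projectives to projectives. The gap is in your treatment of $i_\mu$ and $j_\mu$: you make everything rest on the hypothesis that each coinduced projective $i_\rho(P(i))$, resp.\ $j_\rho(Q(i))$, has finite projective or finite injective dimension over $\Lambda$, and this is false in general; in particular your anticipated bound $\pd_\Lambda i_\rho(P(i))\leq 1$, and the heuristic that finite-dimensionality of $\Lambda$ forces the resolution to terminate, do not hold. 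Concretely, let $Q_A$ have vertices $1,a$ and arrows $x,y\colon 1\to 1$, $\beta\colon 1\to a$ with $I_A=\langle x^2,xy,yx,y^2\rangle$, and let $B=K[x',y']/(x',y')^2$ be local with two loops at $b$; there are no non-trivial paths from $a$ to $a$ in $Q_A$, so $\Lambda$ is a simple gluing algebra. Then: (1) $\Omega_B\,{}_BS(b)\cong {}_BS(b)^{\oplus 2}$ (as $\rad B=\soc B$ is $2$-dimensional and $\rad^2 B=0$) and $\Ext^1_B({}_BS(b),B)\neq 0$ (otherwise $B$ would be self-injective, impossible since $\soc\,{}_BB$ is not simple while ${}_BB$ is indecomposable), so by dimension shifting $\Ext^k_B({}_BS(b),B)\neq 0$ for all $k\geq 1$, i.e.\ $\ind_BB=\infty$; (2) for $I(a)=D(e_aA)$ one computes $\Omega_A I(a)\cong N^{\oplus 3}\oplus P(a)^{\oplus 2}$ and $\Omega_A N\cong N^{\oplus 2}$, where $N$ is the non-projective module with $N_1=N_a=K$, $\beta$ acting as the identity and $x,y$ as zero, so $\pd_A I(a)=\infty$. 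Now take the indecomposable projective $B$-module $Q(b)=B$. Since $j_\mu({}_\Lambda S(v))={}_BS(b)$ and, by Lemma \ref{lemma functor to derived functor}, the exact adjoint pair $(j_\mu,j_\rho)$ induces an adjoint pair on $D^b$, we get $\Ext^k_\Lambda({}_\Lambda S(v),j_\rho(Q(b)))\cong \Ext^k_B({}_BS(b),B)\neq 0$ for all $k\geq 1$, whence $\ind_\Lambda j_\rho(Q(b))=\infty$. On the other hand, $i_\mu j_\rho(Q(b))$ is a non-zero direct sum of copies of $I(a)$ (apply the duality $D$ to the paper's formula $i_\mu j_\lambda(-)\cong P(a)^{\oplus\bullet}$ for the simple gluing algebra $\Lambda^{\mathrm{op}}$ of $A^{\mathrm{op}}$ and $B^{\mathrm{op}}$; it is non-zero because $(j_\rho Q(b))_v=\Hom_B(e_B\Lambda e_v,B)\neq0$), and since $i_\mu$ is exact and preserves projectives, a finite projective $\Lambda$-resolution of $j_\rho(Q(b))$ would restrict to a finite projective $A$-resolution of $I(a)^{\oplus t}$; hence $\pd_\Lambda j_\rho(Q(b))=\infty$ by (2). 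So $j_\rho(Q(b))$ has infinite projective \emph{and} infinite injective dimension: Lemma \ref{lemma adjoint preserves Gorenstein projectives}(ii) is simply not applicable to $(j_\mu,j_\rho)$, and your fallback argument (acyclicity of $\Hom_\Lambda(U^\bullet,j_\rho({}_BB))$ via vanishing of $\Ext$ against modules of finite projective or injective dimension) collapses for the same reason. The finiteness you need holds essentially only in Gorenstein-type situations, whereas the lemma is needed, and is used (e.g.\ in Theorem \ref{theorem stable category of Cm modules} and Example \ref{example not gentle algebra}), precisely for non-Gorenstein $A$, $B$.

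The paper's proof never mentions $i_\rho$ or $j_\rho$; the idea you are missing is that one must exploit the Gorenstein projectivity of the module being restricted, rather than any homological finiteness property of the coinduction functors. For an indecomposable $M\in\Gp(\Lambda)$, the short exact sequence of Lemma \ref{lemma existence of short exact sequene 1} splits, because its left-hand term $U(v)^{\oplus\dim M_v}$ is projective and $\Ext^1_\Lambda(M,\Lambda)=0$; thus $i_\lambda i_\mu M\oplus j_\lambda j_\mu M\cong M\oplus U(v)^{\oplus \dim M_v}$, so that $i_\lambda i_\mu M$ and $j_\lambda j_\mu M$ are Gorenstein projective as direct summands of a Gorenstein projective module. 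A Krull--Schmidt case analysis then shows that either $i_\mu M$ is projective (namely when $M\in\add (j_\lambda j_\mu M)$, using $i_\mu i_\lambda i_\mu M\cong i_\mu M\oplus P(a)^{\oplus s}$), or $M\in \add (i_\lambda i_\mu M)$, in which case Lemma \ref{lemma adjoint preserves Gorenstein projectives}(i) exhibits $\Ext^k_A(i_\mu M,P(i))$ as a direct summand of $\Ext^k_A(i_\mu M, i_\mu U(i))\cong\Ext^k_\Lambda(i_\lambda i_\mu M,U(i))=0$; either way $i_\mu M\in{}^\bot A$. Finally, for an arbitrary $Z\in\Gp(\Lambda)$ one restricts along $i_\mu$ an exact sequence $0\to Z\to U_0\to U_1\to\cdots$ with $U_j$ projective and all kernels Gorenstein projective, and concludes $i_\mu Z\in\Gp(A)$ from the characterization of Gorenstein projective modules as kernels in exact sequences of projectives all of whose kernels lie in ${}^\bot A$ (Lemma 2.1(i)). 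This argument makes no Gorenstein assumption on $A$ or $B$, which, as the example above shows, is exactly what your route cannot achieve.
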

\begin{proof}
We only prove that for $i_\lambda$ and $i_\mu$.

Since $(i_\lambda,i_\mu)$ is an adjoint pair, and both of them are exact and preserve projectives, Lemma \ref{lemma adjoint preserves Gorenstein projectives} implies that $i_\lambda$ preserves Gorenstein projective modules.

For $i_\mu$, first, we claim that  $i_\mu M\in \,^\bot A$ for any Gorenstein projective $\Lambda$-module $M$. In fact, for any indecomposable Gorenstein projective $\Lambda$-module $M$, Lemma \ref{lemma existence of short exact sequene 1} shows that there is a short exact sequence
$$0\rightarrow U(v)^{\oplus\dim M_v} \rightarrow i_\lambda i_\mu M\oplus j_\lambda j_\mu M\rightarrow M\rightarrow0,$$
which is split since $M\in\,^\bot \Lambda$, and then $i_\lambda i_\mu M\oplus j_\lambda j_\mu M\cong M\oplus U(v)^{\oplus\dim M_v}$. Note that
both $i_\lambda i_\mu M$ and $j_\lambda j_\mu M$ are Gorenstein projective.
Since $M$ is indecomposable, we get that $M\in \add (i_\lambda i_\mu M)$ or $M\in \add(j_\lambda j_\mu M)$. If $M\in \add(j_\lambda j_\mu M)$, then
$i_\lambda i_\mu M$ is projective, which implies that $i_\mu i_\lambda i_\mu M\in \proj A$. On the other hand, $i_\mu i_\lambda i_\mu M =i_\mu  M \oplus P(a)^{\oplus s}$ for some integer $s$, so $i_\mu M$ is also projective.

If $M\in\add (i_\lambda i_\mu M)$, then for any indecomposable projective $\Lambda$-module $U(i)$ with $i\in\ver(Q_A)$, Lemma \ref{lemma adjoint preserves Gorenstein projectives} shows that
$$\Ext_A^k(i_\mu M, i_\mu U(i))\cong \Ext_\Lambda^k(i_\lambda i_\mu M,U(i))=0$$ for any $k>0$,
since $i_\lambda i_\mu M$ is Gorenstein projective. Furthermore, $i_\mu U(i)= P(i)\oplus P(a)^{\oplus t}$ for some integer $t$, which implies that
$\Ext_\Lambda^k(i_\mu M, P(i))=0$ for any $k>0$ and any indecomposable projective $A$-module $P(i)$, and then
$i_\mu M\in \,^\bot A$.

Second, for any Gorenstein projective $\Lambda$-module $Z$, there is an exact sequence
$$0\rightarrow Z \rightarrow U_0 \xrightarrow{d_0} U_1  \xrightarrow{d_1}\cdots$$
with $U_j$ projective and $\ker(d_j) \in \Gp(\Lambda)$ for any $j\geq0$. By applying $i_\mu$ to it, we get that there is an exact sequence
$$0\rightarrow i_\mu Z\rightarrow i_\mu U_0 \xrightarrow{i_\mu d_0} i_\mu U_1\xrightarrow{i_\mu d_1}\cdots $$
with $i_\mu U_j$ projective for any $j\geq0$ since $i_\mu$ is exact and preserves projectives. Additionally, since $\ker(d_i) \in \Gp(\Lambda)$, from the above, we know that $i_\mu \ker(d_j) \in \,^\bot A$ for any $j\geq0$, and then $i_\mu Z$ is Gorenstein projective.
\end{proof}

For any additive category $\ca$, we denote by $\Ind \ca$ the set formed by all the indecomposable objects (up to isomorphisms) of $\ca$.
\begin{theorem}\label{theorem stable category of Cm modules}
Let $\Lambda$ be a simple gluing algebra of the two finite-dimensional bound quiver algebras $A$ and $B$.
Then $\underline{\Gp}(\Lambda)\simeq \underline{\Gp}(A) \coprod \underline{\Gp}(B)$.
Furthermore, for any indecomposable $\Lambda$-module $Z$, $Z$ is Gorenstein projective if and only if there exists an indecomposable Gorenstein projective $A$-module $X$ or $B$-module $Y$ such that $Z\cong i_\lambda(X)$ or $Z\cong j_\lambda(Y)$. \end{theorem}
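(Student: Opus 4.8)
The plan is to mirror the proof of Theorem \ref{theorem singularity categories}, replacing the singularity categories by the stable categories $\underline{\Gp}$ and the functors $\tilde{i}_\lambda,\dots$ by their analogues on Gorenstein projectives. By Lemma \ref{lemma the upper four functors presereve Gorenstein projectives} the four functors $i_\lambda,i_\mu,j_\lambda,j_\mu$ preserve Gorenstein projective modules, and they also preserve projectives; since the projective modules are exactly the projective-injective objects of the Frobenius category $\Gp(\Lambda)$ (and of $\Gp(A),\Gp(B)$), each functor induces a triangle functor between the corresponding stable categories, which I denote $\underline{i}_\lambda,\underline{i}_\mu,\underline{j}_\lambda,\underline{j}_\mu$. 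I would then verify the same four properties that drove the proof of Theorem \ref{theorem singularity categories}.

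First I would show that the adjunctions descend to $\underline{\Gp}$: the module-level isomorphism $\alpha_{M,N}\colon\Hom_\Lambda(i_\lambda M,N)\xrightarrow{\sim}\Hom_A(M,i_\mu N)$ carries maps factoring through a projective to maps factoring through a projective, in both directions. Indeed, writing $\alpha_{M,N}(f)=i_\mu(f)\circ\mu_M$ and $\alpha_{M,N}^{-1}(g)=\epsilon_N\circ i_\lambda(g)$ and using that $i_\lambda,i_\mu$ preserve projectives, a factorization on either side is transported to one on the other. Hence $\alpha$ induces $\underline{\Hom}_\Lambda(i_\lambda M,N)\cong\underline{\Hom}_A(M,i_\mu N)$, so $(\underline{i}_\lambda,\underline{i}_\mu)$ is an adjoint pair, and likewise $(\underline{j}_\lambda,\underline{j}_\mu)$. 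Next, $i_\mu i_\lambda M=M\oplus P(a)^{\oplus s}$ gives a natural isomorphism $\underline{i}_\mu\underline{i}_\lambda\cong\Id$, whence $\underline{i}_\lambda$ (and similarly $\underline{j}_\lambda$) is fully faithful; and $i_\mu j_\lambda L=P(a)^{\oplus t}$, $j_\mu i_\lambda M=Q(b)^{\oplus t'}$ give $\underline{i}_\mu\underline{j}_\lambda=0=\underline{j}_\mu\underline{i}_\lambda$, so by adjunction both $\underline{\Hom}_\Lambda(i_\lambda M,j_\lambda L)$ and $\underline{\Hom}_\Lambda(j_\lambda L,i_\lambda M)$ vanish, i.e. the two images are orthogonal. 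Finally, for Gorenstein projective $M$ the sequence (\ref{equation short exact sequence 1}) of Lemma \ref{lemma existence of short exact sequene 1} splits (as $M\in{}^\bot\Lambda$ and $U(v)$ is projective), so in $\underline{\Gp}(\Lambda)$ one has $M\cong\underline{i}_\lambda\underline{i}_\mu M\oplus\underline{j}_\lambda\underline{j}_\mu M$. These four facts give $\underline{\Gp}(\Lambda)\simeq\Im\underline{i}_\lambda\coprod\Im\underline{j}_\lambda\simeq\underline{\Gp}(A)\coprod\underline{\Gp}(B)$, exactly as in Theorem \ref{theorem singularity categories}.

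For the module-level characterization, the direction ``$\Leftarrow$'' is immediate from Lemma \ref{lemma the upper four functors presereve Gorenstein projectives}. For ``$\Rightarrow$'', let $Z$ be an indecomposable Gorenstein projective $\Lambda$-module. If $Z$ is projective it is some $U(i)=i_\lambda P(i)$ or $U(i)=j_\lambda Q(i)$, as required. If $Z$ is nonprojective, the split form of (\ref{equation short exact sequence 1}) forces $Z\in\add(i_\lambda i_\mu Z)$ or $Z\in\add(j_\lambda j_\mu Z)$; say the former. Decomposing $i_\mu Z$ into indecomposable Gorenstein projective $A$-modules and using that $Z$ is indecomposable, I get $Z\mid i_\lambda X$ for a single indecomposable summand $X$ of $i_\mu Z$, say $i_\lambda X=Z\oplus W$. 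Applying $i_\mu$ and comparing $i_\mu i_\lambda X=X\oplus P(a)^{\oplus m}$ with $X\mid i_\mu Z$ then pins down $i_\mu Z=X\oplus P(a)^{\oplus m'}$ with $X$ indecomposable and, since otherwise $Z$ would be projective, nonprojective.

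It remains to prove $W=0$, i.e. that $i_\lambda X$ has no projective direct summand, and this is the step I expect to be the crux. I would compute $\Top(i_\lambda X)=(\Lambda/\rad\Lambda)\otimes_A X\cong\Top_A X$, so that the top of $i_\lambda X$ is supported on the vertices of $Q_A$ and equals $\Top_A X$. If some indecomposable projective $U(i)$ were a summand of $i_\lambda X$, there would be a (split) surjection $i_\lambda X\twoheadrightarrow U(i)$; reading it on tops and transporting through $\Hom_\Lambda(i_\lambda X,U(i))\cong\Hom_A(X,i_\mu U(i))$, this would yield a surjection from $X$ onto an indecomposable projective $A$-module ($P(i)$ or $P(a)$). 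Since a surjection onto a projective splits, that projective would be a direct summand of the indecomposable nonprojective module $X$, a contradiction. Hence $i_\lambda X\cong Z$, which completes the characterization; the statement for $j_\lambda$ is symmetric.
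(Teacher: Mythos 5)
Your first half (descending the adjunctions to the stable categories, full faithfulness of $\underline{i}_\lambda,\underline{j}_\lambda$, orthogonality of the two images via $\underline{i}_\mu\underline{j}_\lambda=0=\underline{j}_\mu\underline{i}_\lambda$, and the splitting of the sequence (\ref{equation short exact sequence 1})) is correct and is essentially the paper's own proof, only with the adjunction-descent verified rather than quoted. For the module-level characterization you genuinely diverge: the paper stays inside $\underline{\Gp}(\Lambda)$, asserts as ``obvious'' that $i_\lambda,j_\lambda$ preserve indecomposables, and then lifts a stable isomorphism between indecomposable non-projectives to a module isomorphism; you instead work at the module level and try to prove the needed indecomposability statement via the computation $\Top(i_\lambda X)\cong\Top_A X$. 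That substitute is worthwhile --- it supplies exactly the point the paper waves away --- but as written it has a gap.

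The gap is the sentence ``It remains to prove $W=0$, i.e.\ that $i_\lambda X$ has no projective direct summand.'' This is not an equivalence unless you already know that $W$ is projective: a priori $W$ could be a nonzero Gorenstein projective module without projective direct summands, and then ruling out projective summands of $i_\lambda X$ says nothing about $W$. Fortunately the missing step follows from facts you are already using. From $i_\mu i_\lambda X=X\oplus P(a)^{\oplus m}=i_\mu Z\oplus i_\mu W$ and $i_\mu Z=X\oplus P(a)^{\oplus m'}$, Krull--Schmidt cancellation of $X$ gives $i_\mu W\cong P(a)^{\oplus (m-m')}$; moreover $j_\mu W$ is a direct summand of $j_\mu i_\lambda X\cong Q(b)^{\oplus t}$, hence projective; now apply the split sequence (\ref{equation short exact sequence 1}) to $W$ itself to get $W\oplus U(v)^{\oplus\dim W_v}\cong i_\lambda i_\mu W\oplus j_\lambda j_\mu W$, a projective module, so $W$ is projective and the reduction is legitimate. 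A second point needing care is the crux step itself: the transport must be done by writing the split epimorphism as $\pi=\epsilon_{U(i)}\circ i_\lambda(g)$ with $g=\alpha_{X,U(i)}(\pi)\colon X\rightarrow i_\mu U(i)$ and using the natural isomorphism $\Top\circ\, i_\lambda\cong\Top_A$; then surjectivity of $\pi$ forces $\Top_A(g)\neq 0$, so some component $X\rightarrow P(i)$ or $X\rightarrow P(a)$ is surjective by Nakayama and splits, the desired contradiction. The more naive transport --- apply $i_\mu$ to $\pi$ and restrict to the summand $X$ of $i_\mu i_\lambda X$ --- fails precisely when $U(i)=U(v)$: there $i_\mu U(v)=P(a)^{\oplus(t+1)}$, and the complement $P(a)^{\oplus m}$ of $X$ has top $S(a)^{\oplus m}$, which can carry the entire surjection on tops, yielding no surjection out of $X$. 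With these two repairs your module-level route is complete, and unlike the paper's proof it actually justifies that $i_\lambda$ carries indecomposable non-projective Gorenstein projectives to indecomposable modules.
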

\begin{proof}
Lemma \ref{lemma the upper four functors presereve Gorenstein projectives} shows that $i_\lambda,i_\mu,j_\lambda,j_\mu$ induce four exact functors on the stable categories of Gorenstein projective modules, which are also denoted by $\tilde{i}_\lambda,\tilde{i}_\mu, \tilde{j}_\lambda,\tilde{j}_\mu$ respectively. Note that $\tilde{i}_\lambda$ and $\tilde{j}_\lambda$ are fully faithful.

For any Gorenstein projective $\Lambda$-modules $Z$, Lemma \ref{lemma existence of short exact sequene 1} yields that
$Z\cong \tilde{i}_\lambda \tilde{i}_\mu(Z) \oplus \tilde{j}_\lambda \tilde{j}_\mu (Z)$.
Similar to the proof of Theorem \ref{theorem singularity categories}, we get that
for any $L\in \Gp(A),N\in \Gp(B)$, $\Hom_{\underline{\Gp}(\Lambda)}( \tilde{i}_\lambda (L), \tilde{j}_\lambda(N))=0=\Hom_{\underline{\Gp}(\Lambda)}( \tilde{j}_\lambda(N),\tilde{i}_\lambda (L))$. Therefore,
$$\underline{\Gp}(\Lambda)\simeq \underline{\Gp}(A) \coprod \underline{\Gp}(B).$$

Obviously, $i_\lambda,j_\lambda$ preserve indecomposable modules. So for any indecomposable Gorenstein projective $A$-module $X$ and $B$-module $Y$, we get that
$i_\lambda(X),j_\lambda(Y)$ are indecomposable Gorenstein projective $\Lambda$-modules. On the other hand, for any indecomposable Gorenstein projective $\Lambda$-module $Z$, if $Z$ is projective, then $Z=U(i)$ for some vertex $i$. If $i\in \ver(Q_A)$, then $Z=i_\lambda(P(i))$; if $i\in \ver(Q_B)$, then $Z=j_\lambda(Q(i))$.

If $Z$ is not projective, then the proof of Lemma \ref{lemma the upper four functors presereve Gorenstein projectives} implies that one and only one of $i_\mu Z$, $j_\mu Z$ is not projective. If $i_\mu Z$ is not projective, then $Z\cong i_\lambda i_\mu Z$ in $\underline{\Gp}(\Lambda)$. Set $i_\mu Z= \bigoplus_{i=1}^nZ_i$, where $Z_i$ is indecomposable for each $1\leq i\leq n$. Then $i_\lambda i_\mu Z=\bigoplus_{i=1}^n i_\lambda(Z_i)$ which is isomorphic to $Z$ in $\underline{\Gp}(\Lambda)$. So there is one and only one non-projective $Z_{i_0}$ such that $i_\lambda(Z_{i_0})\cong Z$. Since $i_\mu(Z)\in \Gp(A)$, we get that $Z_{i_0}$ is an indecomposable Gorenstein projective $A$-module such that $Z\cong i_\lambda(Z_{i_0})$.

If $j_\mu Z$ is not projective, we can prove it similarly.
\end{proof}

\begin{corollary}
Let $\Lambda$ be a simple gluing algebra of several finite-dimensional bound quiver algebras $A_1,\dots,A_n$. If $A_1,\dots A_n$ are self-injective algebras, then
$$D_{sg}(\Lambda)\simeq \underline{\Gproj} \Lambda\simeq \coprod_{i=1}^n\underline{\mod} A_i.$$
\end{corollary}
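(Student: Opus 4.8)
The plan is to obtain the corollary by combining the three structural results already proved with the special features of self-injective algebras, the $n$-fold case being handled by induction on $n$. First I would record the two elementary observations about the pieces: each self-injective $A_i$ is $0$-Gorenstein, hence Gorenstein, and satisfies $\Gproj A_i=\mod A_i$, so that $\underline{\Gp}(A_i)=\underline{\mod}\,A_i$.

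Next I would establish the first equivalence $D_{sg}(\Lambda)\simeq\underline{\Gproj}\,\Lambda$. Viewing the $n$-fold gluing $\Lambda$ as the (two-fold) simple gluing of $\Lambda'$, the gluing of $A_1,\dots,A_{n-1}$, with $A_n$, and inducting with Proposition \ref{lemma simple gluing Nakayama algebra Gorenstein}, one sees that every intermediate gluing algebra --- and in particular $\Lambda$ itself --- is Gorenstein, being a simple gluing of two Gorenstein algebras. Since $\Lambda$ is Gorenstein, Buchweitz's Theorem yields an exact equivalence $\Phi\colon\underline{\Gp}\,\Lambda\xrightarrow{\sim}D_{sg}(\Lambda)$, which gives the first $\simeq$.

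For the second equivalence I would apply Theorem \ref{theorem stable category of Cm modules} inductively to the same decomposition of $\Lambda$ as the gluing of $\Lambda'$ and $A_n$:
$$\underline{\Gp}(\Lambda)\simeq\underline{\Gp}(\Lambda')\coprod\underline{\Gp}(A_n)\simeq\Big(\coprod_{i=1}^{n-1}\underline{\Gp}(A_i)\Big)\coprod\underline{\Gp}(A_n)=\coprod_{i=1}^{n}\underline{\Gp}(A_i),$$
and then substitute $\underline{\Gp}(A_i)=\underline{\mod}\,A_i$ from the first step. Chaining the two equivalences gives $D_{sg}(\Lambda)\simeq\underline{\Gproj}\,\Lambda\simeq\coprod_{i=1}^n\underline{\mod}\,A_i$.

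The only real point to check is that the induction is legitimate: at each stage the partially glued algebra $\Lambda'$ must again be a finite-dimensional bound quiver algebra, so that the binary Theorems \ref{theorem singularity categories} and \ref{theorem stable category of Cm modules} and Proposition \ref{lemma simple gluing Nakayama algebra Gorenstein} may be reapplied with $\Lambda'$ in the role of ``$A$'' and $A_n$ in the role of ``$B$''. This is exactly the inductive definition of the $n$-fold simple gluing algebra, so the verification is bookkeeping rather than a genuine obstacle; the essential content is entirely carried by Buchweitz's Theorem together with the two decomposition theorems.
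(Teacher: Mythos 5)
Your proposal is correct and follows essentially the same route as the paper, whose proof is precisely the one-line appeal to Proposition \ref{lemma simple gluing Nakayama algebra Gorenstein} and Theorem \ref{theorem stable category of Cm modules}; you have merely made explicit the induction inherent in the definition of an $n$-fold simple gluing and the use of Buchweitz's Theorem to pass from Gorensteinness to $D_{sg}(\Lambda)\simeq\underline{\Gproj}\Lambda$.
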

\begin{proof}
It follows from Proposition \ref{lemma simple gluing Nakayama algebra Gorenstein} and Theorem \ref{theorem stable category of Cm modules} immediately.
\end{proof}

\begin{corollary}\label{corollary Gorenstein defect categories}
Let $\Lambda$ be a simple gluing algebra of the two finite-dimensional bound quiver algebras $A$ and $B$.
Then $D_{def}(\Lambda)\simeq D_{def}(A) \coprod D_{def}(B)$.
\end{corollary}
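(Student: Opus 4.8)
The plan is to derive this as a formal consequence of the two decomposition theorems already established, Theorem \ref{theorem singularity categories} and Theorem \ref{theorem stable category of Cm modules}, by checking that the Buchweitz embedding is compatible with the gluing functors. Recall that $D_{def}(\Lambda)$ is by definition the Verdier quotient $D_{sg}(\Lambda)/\Im(\Phi_\Lambda)$, where $\Phi_\Lambda\colon\underline{\Gp}(\Lambda)\to D_{sg}(\Lambda)$ sends a Gorenstein projective module to the corresponding stalk complex in degree $0$. So the goal is to identify the essential image of $\Phi_\Lambda$ under the identification $D_{sg}(\Lambda)\simeq\Im(\tilde{i}_\lambda)\coprod\Im(\tilde{j}_\lambda)$ furnished by Theorem \ref{theorem singularity categories}.

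First I would record the compatibility of the stable-category functors with the singularity-category functors through $\Phi$. Since $i_\lambda$ is exact, preserves projectives, and preserves Gorenstein projectives (Lemma \ref{lemma the upper four functors presereve Gorenstein projectives}), it commutes with the passage from a module to its stalk complex; hence on objects $\Phi_\Lambda\tilde{i}_\lambda(X)$ and $\tilde{i}_\lambda\Phi_A(X)$ agree for every Gorenstein projective $A$-module $X$, naturally in $X$. This yields a natural isomorphism $\Phi_\Lambda\circ\tilde{i}_\lambda\simeq\tilde{i}_\lambda\circ\Phi_A$ of functors $\underline{\Gp}(A)\to D_{sg}(\Lambda)$, and likewise $\Phi_\Lambda\circ\tilde{j}_\lambda\simeq\tilde{j}_\lambda\circ\Phi_B$, where on the left $\tilde{i}_\lambda,\tilde{j}_\lambda$ denote the functors on stable categories of Theorem \ref{theorem stable category of Cm modules} and on the right the functors on singularity categories.

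Second I would identify $\Im(\Phi_\Lambda)$. By Theorem \ref{theorem stable category of Cm modules} every indecomposable object of $\underline{\Gp}(\Lambda)$ lies in $\Im(\tilde{i}_\lambda)$ or in $\Im(\tilde{j}_\lambda)$, so $\underline{\Gp}(\Lambda)=\tilde{i}_\lambda(\underline{\Gp}(A))\coprod\tilde{j}_\lambda(\underline{\Gp}(B))$. Applying $\Phi_\Lambda$ and invoking the compatibility above, the essential image of $\Phi_\Lambda$ equals $\tilde{i}_\lambda(\Im\Phi_A)\coprod\tilde{j}_\lambda(\Im\Phi_B)$, which under the equivalence $D_{sg}(\Lambda)\simeq D_{sg}(A)\coprod D_{sg}(B)$ corresponds exactly to the orthogonal sum of the thick subcategories $\Im\Phi_A\subseteq D_{sg}(A)$ and $\Im\Phi_B\subseteq D_{sg}(B)$.

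Finally I would invoke the elementary fact that a Verdier quotient of an orthogonal decomposition $\cc\simeq\cc_1\coprod\cc_2$ by a thick subcategory of the form $\cd_1\coprod\cd_2$ with $\cd_i\subseteq\cc_i$ splits as $(\cc_1/\cd_1)\coprod(\cc_2/\cd_2)$, since the vanishing of $\Hom$ between the two summands passes to the quotient and the calculus of fractions is carried out componentwise. Applying this gives
\[
D_{def}(\Lambda)=D_{sg}(\Lambda)/\Im(\Phi_\Lambda)\simeq\bigl(D_{sg}(A)/\Im\Phi_A\bigr)\coprod\bigl(D_{sg}(B)/\Im\Phi_B\bigr)=D_{def}(A)\coprod D_{def}(B),
\]
as desired. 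I expect the only genuine work to be verifying the compatibility $\Phi_\Lambda\tilde{i}_\lambda\simeq\tilde{i}_\lambda\Phi_A$ at the level of the singularity categories (it is immediate on objects, and one must check naturality persists after passing to fractions) together with the bookkeeping placing the two thick subcategories inside the two orthogonal summands; the quotient step itself is purely formal.
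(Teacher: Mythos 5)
Your proposal is correct and follows essentially the same route as the paper: the paper's own proof also combines Theorem \ref{theorem singularity categories} and Theorem \ref{theorem stable category of Cm modules}, observing that both equivalences are induced by the same functors $i_\lambda, j_\lambda$ and hence are compatible with the Buchweitz embedding $\Phi$, and then concludes from the definition of $D_{def}$. Your write-up merely makes explicit the three steps the paper compresses into one sentence (the commutation $\Phi_\Lambda\tilde{i}_\lambda\simeq\tilde{i}_\lambda\Phi_A$, the identification of $\Im\Phi_\Lambda$ with the orthogonal sum $\Im\Phi_A\coprod\Im\Phi_B$, and the componentwise splitting of the Verdier quotient), all of which are accurate.
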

\begin{proof}
From Theorem \ref{theorem singularity categories} and Theorem \ref{theorem stable category of Cm modules}, we get that
$$D_{sg}(\Lambda)\simeq D_{sg}(A)\coprod D_{sg}(B),\mbox{ and } \underline{\Gp}(\Lambda)\simeq \underline{\Gp}(A) \coprod \underline{\Gp}(B).$$
In particular, since these two equivalences are induced by the functors $i_\lambda,j_\lambda$, they are compatible, which implies the result by the definition of Gorenstein defect categories immediately.
\end{proof}

\begin{corollary}\label{corollary factors through the gluing vertex}
Keep the notations as above. Then for any $X\in\Gp(A)$, $Y\in\Gp (B)$, and any morphism $f\in\Hom_\Lambda(i_\lambda (X),j_\lambda (Y))$, we have that $f$ factors through $U(v)^{\oplus n}$ for some $n$.
\end{corollary}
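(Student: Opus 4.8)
The plan is to observe that the conclusion is really a statement about the adjunction $(i_\lambda,i_\mu)$ and does not require passing to the stable category at all. The starting ingredients are already recorded above: the computation $i_\mu j_\lambda(Y)=P(a)^{\oplus m}$ for some integer $m$, together with the identity $i_\lambda(P(a))=U(v)$. Combining these gives
$$
i_\lambda i_\mu j_\lambda(Y)=i_\lambda\big(P(a)^{\oplus m}\big)=U(v)^{\oplus m},
$$
so a copy of $U(v)^{\oplus m}$ is built into the picture from the outset.

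Given $f\in\Hom_\Lambda(i_\lambda(X),j_\lambda(Y))$, I would transport it across the adjoint isomorphism $\alpha_{X,j_\lambda Y}\colon\Hom_\Lambda(i_\lambda(X),j_\lambda(Y))\xrightarrow{\sim}\Hom_A(X,i_\mu j_\lambda(Y))$ to obtain $\hat f:=\alpha_{X,j_\lambda Y}(f)\colon X\to i_\mu j_\lambda(Y)=P(a)^{\oplus m}$. The key algebraic fact is the standard description of the inverse adjunction through the counit: for any $A$-module $X$ and any $\Lambda$-module $N$ one has $\alpha^{-1}_{X,N}(\hat g)=\epsilon_N\circ i_\lambda(\hat g)$, where $\epsilon_N=\alpha_{i_\mu(N),N}^{-1}(1_{i_\mu(N)})$ is the counit defined above. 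Applying this with $N=j_\lambda(Y)$ and $\hat g=\hat f$ yields
$$
f=\epsilon_{j_\lambda Y}\circ i_\lambda(\hat f).
$$
Here $i_\lambda(\hat f)\colon i_\lambda(X)\to i_\lambda i_\mu j_\lambda(Y)=U(v)^{\oplus m}$ and $\epsilon_{j_\lambda Y}\colon U(v)^{\oplus m}\to j_\lambda(Y)$, so $f$ factors through $U(v)^{\oplus m}$, which is exactly the assertion with $n=m$.

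The only point that needs care is the counit formula $\alpha^{-1}_{X,N}(\hat g)=\epsilon_N\circ i_\lambda(\hat g)$: one checks it against the definitions $\mu_M=\alpha_{M,i_\lambda M}(1_{i_\lambda M})$ and $\epsilon_N=\alpha_{i_\mu N,N}^{-1}(1_{i_\mu N})$ by the usual triangle identities $\epsilon_{i_\lambda X}\circ i_\lambda(\mu_X)=1_{i_\lambda X}$ and naturality of $\epsilon$. This is the one genuine (though entirely routine) verification, and I expect no real obstacle. In particular the Gorenstein projectivity of $X$ and $Y$ plays no role, so the statement in fact holds for arbitrary $X\in\mod A$ and $Y\in\mod B$. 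One could instead start from the vanishing $\Hom_{\underline{\Gp}\Lambda}(\tilde i_\lambda X,\tilde j_\lambda Y)=0$ established in the proof of Theorem \ref{theorem stable category of Cm modules} to factor $f$ through \emph{some} projective, but one would then still have to cut that projective down to copies of $U(v)$; the adjunction argument above achieves precisely this in one stroke.
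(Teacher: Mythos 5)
Your proof is correct, and it takes a genuinely different and more economical route than the paper's. The paper proves the corollary as an application of Theorem \ref{theorem stable category of Cm modules}: since $\Hom_{\underline{\Gp}(\Lambda)}(\tilde{i}_\lambda(X),\tilde{j}_\lambda(Y))=0$, the map $f$ factors through \emph{some} projective $\Lambda$-module; it then pre-composes with $i_\lambda$ of a Gorenstein-projective embedding $X\hookrightarrow P_X$ and post-composes with $j_\lambda$ of the projective cover $Q_Y\twoheadrightarrow Y$ to reduce to a morphism $i_\lambda(P_X)\rightarrow j_\lambda(Q_Y)$ between projectives lying over the two sides of the gluing, and finally invokes the combinatorial fact that any morphism $U(i)\rightarrow U(j)$ with $i\in\ver(Q_A)$, $j\in\ver(Q_B)$ factors through $\add U(v)$. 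Your argument bypasses all of this: the counit factorization $f=\epsilon_{j_\lambda Y}\circ i_\lambda(\hat{f})$ is a purely formal property of the adjunction $(i_\lambda,i_\mu)$ (indeed your identity $\alpha^{-1}_{X,N}(\hat{g})=\epsilon_N\circ i_\lambda(\hat{g})$ follows from naturality of $\alpha^{-1}$ in the first variable alone, applied to $\hat{g}:X\rightarrow i_\mu(N)$, so even the triangle identities are not needed), and the middle term is $i_\lambda i_\mu j_\lambda(Y)=i_\lambda(P(a)^{\oplus m})=U(v)^{\oplus m}$ by the computation recorded before (\ref{equation form of morphism 2}) together with $i_\lambda(P(a))=U(v)$. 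This buys two things the paper's proof does not: the Gorenstein-projectivity of $X$ and $Y$ is seen to be superfluous (every morphism $i_\lambda(X)\rightarrow N$ factors through $i_\lambda i_\mu(N)$, so the statement holds for all $X\in\mod A$, $Y\in\mod B$), and the exponent is explicit, namely the integer $m$ with $i_\mu j_\lambda(Y)\cong P(a)^{\oplus m}$ (which equals $\dim_K Y_b$). What the paper's route shows, by contrast, is how the statement sits inside the stable equivalence $\underline{\Gp}(\Lambda)\simeq\underline{\Gp}(A)\coprod\underline{\Gp}(B)$ --- which is why it is stated as a corollary of that theorem --- but as a proof it is longer and needs hypotheses that your argument reveals to be unnecessary.
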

\begin{proof}
From Theorem \ref{theorem stable category of Cm modules}, we get that $f$ factors through some projective $\Lambda$-module $U$ as $f=f_2f_1$, where $f_1:i_\lambda(X)\rightarrow U$, $f_2:U\rightarrow j_\lambda(Y)$.
Let $Q_Y\xrightarrow{\alpha_1} Y$ be the projective cover of $Y$. Then $f_2$ factors through $j_\lambda(Q_Y)$ as $f_2=j_\lambda(\alpha_1) f_3$ for some $f_3:U\rightarrow j_\lambda(Q_Y)$.
Since $X$ is Gorenstein projective, there is a short exact sequence
$$0\rightarrow X\xrightarrow{\beta_1} P_X\xrightarrow{\beta_2} X_1\rightarrow0$$
such that $P_X$ projective, and $X_1\in\Gp(A)$. Then
$f_1$ factors through $i_\lambda(P_X)$ as $f_1=f_4 i_\lambda(\beta_1)$ for some $f_4: i_\lambda(P_X)\rightarrow U$
since $i_\lambda$ is exact and preserves Gorenstein projective modules.
So $f=j_\lambda(\alpha_1)f_3f_4 i_\lambda(\beta_1)$.
For any indecomposable $\Lambda$-modules $U(i),U(j)$ with $i\in \ver(Q_A)$, $j\in \ver(Q_B)$, and any morphism $\gamma:U(i)\rightarrow U(j)$, it is easy to see that $\gamma$ factors through some object in $\add U(v)$.
So $f_3f_4: i_\lambda(P_X)\rightarrow j_\lambda(Q_Y)$ factors through $U(v)^{\oplus n}$ for some $n$, and then the desired result follows.
\end{proof}

Similarly, for any $X\in\Gp(A)$, $Y\in\Gp (B)$, and any morphism $g\in\Hom_\Lambda(j_\lambda (Y),i_\lambda (X))$, then $g$ factors through $U(v)^{\oplus n}$ for some $n$.

\begin{example}
For simple gluing algebra $\Lambda= KQ/I$ of $A$ and $B$, the condition that $I$ is generated by $I_A$ and $I_B$ is necessary to ensure that the above corollary holds.

Keep the notations as in Example \ref{example 1}. Then $D_{def}(A)=0=D_{def}(B)$ since $A$ and $B$ are Gorenstein. However, $D_{def}(\Lambda)\neq 0$ since $\Lambda$ is not Gorenstein.
\end{example}

The following example is taken from \cite{chen3}.
\begin{example}\label{example not gentle algebra}
Let $\Lambda=KQ/I$ be the algebra where $Q$ is the quiver $\xymatrix{1 \ar@/^/[r]^{\alpha_1} & 2 \ar@/^/[l]^{\beta_1} \ar@/^/[r]^{\alpha_2} & 3\ar@/^/[l]^{\beta_2}  &4,\ar[l]_\gamma   }$
and $I=\langle \beta_i\alpha_i,\alpha_i\beta_i, \beta_2\gamma |i=1,2 \rangle$. Let $A=(e_1+e_2)\Lambda (e_1+e_2)$, and $B=(1-e_1)\Lambda(1-e_1)$. Then $\Lambda$ is a simple gluing algebra of $A$ and $B$. Obviously, $A$ is self-injective with the indecomposable non-projective Gorenstein projective modules $_AS(1),\,_AS(2)$; $B$ is CM-free, and then $D_{def}(B)=D_{sg}(B)$. Theorem \ref{theorem singularity categories} shows that
$D_{sg}(\Lambda)\simeq D_{sg}(A) \coprod D_{sg}(B)$.
From Theorem \ref{theorem stable category of Cm modules}, we get that $\underline{\Gp}(\Lambda)\simeq \underline{\Gp}(A)=\underline{\mod} A$. In particular, the indecomposable non-projective Gorenstein projective $\Lambda$-modules are $i_\lambda(_AS_1)=\,_\Lambda S_1$, $i_\lambda(_A S_2)=\rad U(1)$ (which is the string module with its string $2\xrightarrow{\alpha_2}3$).
From Corollary \ref{corollary Gorenstein defect categories}, we get that $D_{def}(\Lambda)\simeq D_{sg}(B)$.

\end{example}

The following corollary follows from Theorem \ref{theorem stable category of Cm modules} directly.

\begin{corollary}\label{corollary CM-finite CM-free}
Let $\Lambda$ be a simple gluing algebra of the two finite-dimensional bound quiver algebras $A=KQ_A/I_A$ and $B=KQ_B/I_B$. Then

(i) $\Lambda$ is CM-free if and only if $A$ and $B$ are CM-free;

(ii) $\Lambda$ is CM-finite if and only if $A$ and $B$ are CM-finite.
\end{corollary}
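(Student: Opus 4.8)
The plan is to deduce both equivalences directly from the explicit classification of indecomposable Gorenstein projective $\Lambda$-modules in Theorem \ref{theorem stable category of Cm modules}. That theorem tells me that every indecomposable $Z\in\Gp(\Lambda)$ is isomorphic to $i_\lambda(X)$ or $j_\lambda(Y)$ for some indecomposable $X\in\Gp(A)$ or $Y\in\Gp(B)$, and conversely that $i_\lambda$ and $j_\lambda$ send indecomposable Gorenstein projectives to indecomposable Gorenstein projectives. The only auxiliary facts I will need are that $i_\lambda$, $j_\lambda$ preserve projectives, the identities $i_\mu i_\lambda(X)\cong X\oplus P(a)^{\oplus s}$ and $j_\mu j_\lambda(Y)\cong Q(b)^{\oplus t}\oplus Y$, and the Krull--Schmidt theorem.

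For (i), I would first assume $A$ and $B$ are CM-free. Given an indecomposable $Z\in\Gp(\Lambda)$, I write $Z\cong i_\lambda(X)$ or $Z\cong j_\lambda(Y)$ via the theorem; CM-freeness makes $X$ (resp. $Y$) projective, and since $i_\lambda$, $j_\lambda$ preserve projectives, $Z$ is projective, giving $\Gp(\Lambda)=\proj\Lambda$. For the converse, assuming $\Lambda$ is CM-free, I take an indecomposable $X\in\Gp(A)$; then $i_\lambda(X)$ is indecomposable Gorenstein projective, hence projective, and applying the projective-preserving functor $i_\mu$ shows that $X\oplus P(a)^{\oplus s}$ is projective, so $X$ is projective by Krull--Schmidt. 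The same argument with $j_\mu$, $j_\lambda$ handles $B$.

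For (ii), the classification identifies $\Ind\Gp(\Lambda)$ with the union of the images of $\Ind\Gp(A)$ and $\Ind\Gp(B)$ under $i_\lambda$ and $j_\lambda$. If $A$ and $B$ are CM-finite, both index sets are finite, so $\Ind\Gp(\Lambda)$ is finite and $\Lambda$ is CM-finite. For the converse I will use that $i_\lambda$ (and likewise $j_\lambda$) is injective on isomorphism classes of indecomposables: from $i_\lambda(X)\cong i_\lambda(X')$ I apply $i_\mu$ to obtain $X\oplus P(a)^{\oplus s}\cong X'\oplus P(a)^{\oplus s'}$ and conclude $X\cong X'$ by Krull--Schmidt (comparing non-projective summands, the projective case being immediate). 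Hence CM-finiteness of $\Lambda$ forces $\Ind\Gp(A)$ and $\Ind\Gp(B)$ to be finite, so $A$ and $B$ are CM-finite.

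Once Theorem \ref{theorem stable category of Cm modules} is available, the whole argument is essentially bookkeeping; the single point that needs genuine care, and which I expect to be the main (if minor) obstacle, is the injectivity of $i_\lambda$ and $j_\lambda$ on isomorphism classes of indecomposables used in both converse directions, which is dispatched uniformly by applying the relevant restriction functor and invoking Krull--Schmidt.
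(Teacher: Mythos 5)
Your proof is correct and takes essentially the same route as the paper: the paper's entire proof is the single sentence that the corollary ``follows from Theorem \ref{theorem stable category of Cm modules} directly,'' and your argument is exactly a careful expansion of that, using the classification of indecomposable Gorenstein projectives together with the already-established facts that $i_\lambda$, $j_\lambda$, $i_\mu$, $j_\mu$ preserve projectives and that $i_\mu i_\lambda(X)\cong X\oplus P(a)^{\oplus s}$. The injectivity of $i_\lambda$, $j_\lambda$ on isomorphism classes of indecomposables, which you rightly isolate as the one point needing care, is correctly dispatched by your Krull--Schmidt argument.
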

At the end of this section, we describe the singularity categories of cluster-tilted algebras of type $\A_n$ and endomorphism algebras of maximal rigid objects of cluster tube $\cc_n$. Note that both of them are $1$-Gorenstein algebras.

Let $H_1=K(\circ\rightarrow\circ)$ be the hereditary algebra of type $\A_2$, $H_2=KQ/I$ the self-injective algebra, where $Q$ is the quiver
\[\xymatrix{&\circ\ar[dr]^{\alpha_2} &\\
\circ\ar[ur]^{\alpha_1} &&\circ\ar[ll]^{\alpha_3}&\mbox{ and }I=\langle \alpha_2\alpha_1,\alpha_3\alpha_2,\alpha_1\alpha_3 \rangle.
}\]

Let $A_1,\dots,A_n$ be algebras where each of them is either $H_1$ or $H_2$. The cluster-tilted algebra of type $\A$ is an algebra $\Lambda$ which is a simple gluing algebra of $A_1,\dots,A_n$ such that there are only two of $A_1,\dots,A_n$ glued at each glued vertex, see \cite{BV}. From Theorem \ref{theorem stable category of Cm modules}, we get the following result immediately.

\begin{corollary}[\cite{CGLu,Ka}]
Let $KQ/I$ be a cluster-tilted algebra of type $\A$. Then
$$\underline{\Gproj}(KQ/I)\simeq \coprod_{t(Q)} \underline{\mod} H_2,$$
where $t(Q)$ is the number of the oriented cycles of length $3$ in $Q$.
\end{corollary}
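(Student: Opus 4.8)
The plan is to apply Theorem \ref{theorem stable category of Cm modules} inductively and then to compute the stable Gorenstein projective category of each of the two possible building blocks separately.

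First, since $KQ/I$ is by construction a simple gluing algebra of $A_1,\dots,A_n$, I realise it as the result of gluing on one new block to the algebra assembled so far, one block at a time. Each such step is a simple gluing of two algebras, so Theorem \ref{theorem stable category of Cm modules} applies, and induction on $n$ yields
$$\underline{\Gproj}(KQ/I)\simeq \coprod_{i=1}^n \underline{\Gproj}(A_i).$$

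Second, I compute the two possible factors. The algebra $H_1=K(\circ\rightarrow\circ)$ is hereditary, hence of finite global dimension, so it is CM-free and $\underline{\Gproj}(H_1)=0$; such factors contribute nothing to the coproduct. The algebra $H_2$ is self-injective, so $\Gproj(H_2)=\mod H_2$ and therefore $\underline{\Gproj}(H_2)=\underline{\mod}H_2$. Hence the coproduct collapses to one copy of $\underline{\mod}H_2$ for each index $i$ with $A_i=H_2$, giving
$$\underline{\Gproj}(KQ/I)\simeq \coprod_{\{i\,:\,A_i=H_2\}} \underline{\mod}H_2.$$

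Third, it remains to match the number of blocks equal to $H_2$ with $t(Q)$, and this combinatorial bookkeeping is the main point to check. Using the description of the quivers of cluster-tilted algebras of type $\A$ (cf.\ \cite{BV}), the quiver of each $H_2$-block is an oriented $3$-cycle, while each $H_1$-block is a single arrow; since exactly two blocks are glued at every glued vertex, the gluing is tree-like and introduces no new oriented cycle. Thus the oriented $3$-cycles of $Q$ are precisely the quivers of the $H_2$-blocks, so the number of indices $i$ with $A_i=H_2$ equals $t(Q)$. Combining this with the previous step gives $\underline{\Gproj}(KQ/I)\simeq \coprod_{t(Q)}\underline{\mod}H_2$, as required. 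The only delicate point, and the place where I would spend care, is the verification that no oriented cycle of length $3$ arises from the gluing itself rather than from an $H_2$-block; this is guaranteed by the two-blocks-per-glued-vertex hypothesis defining cluster-tilted algebras of type $\A$.
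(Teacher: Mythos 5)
Your proposal is correct and follows essentially the same route as the paper, which simply invokes Theorem \ref{theorem stable category of Cm modules} and leaves the inductive decomposition into blocks, the computation $\underline{\Gproj}(H_1)=0$, $\underline{\Gproj}(H_2)=\underline{\mod}H_2$, and the identification of $H_2$-blocks with oriented $3$-cycles implicit. Your explicit treatment of these steps, including the observation that the tree-like gluing creates no new oriented cycles, is exactly the argument the paper's ``immediately'' compresses.
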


Let $KQ_1/I_1$ be a cluster-tilted algebra of type $\A_{n-1}$ $(n\geq2)$, and $KQ_2/\langle \varphi^2\rangle$, where $Q_2$ is a loop $\varphi$. Note that $KQ_1/I_1$ is a simple gluing algebra of several algebras which are isomorphic to $H_1$ or $H_2$.
Choose a vertex $a\in Q_1$ which is not a glued vertex for $Q_1$. Then define $KQ/I$ to be the simple gluing algebra of $KQ_1/I_1$ and $KQ_2/I_2$ by identifying $a$ and the unique vertex of $Q_2$. From \cite{Va,Yang}, we get that $KQ/I$ is the endomorphism algebra of a maximal rigid object of cluster tube $\cc_n$, and all endomorphism algebras of maximal rigid objects of cluster tube $\cc_n$ are arisen in this way. Theorem \ref{theorem stable category of Cm modules} yields the following result immediately.

\begin{corollary}[\cite{Ka}]
Let $KQ/I$ be an endomorphism algebra of a maximal rigid objects of cluster tube $\cc_n$. Then
$$\underline{\Gproj}(KQ/I)\simeq \coprod_{t(Q)} \underline{\mod} H_2 \coprod \underline{\mod} K[X]/\langle X^2\rangle.$$
where $t(Q)$ is the number of the oriented cycles of length $3$ in $Q$.
\end{corollary}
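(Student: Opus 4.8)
The plan is to present $KQ/I$ explicitly as a two-term simple gluing algebra and then feed it into Theorem \ref{theorem stable category of Cm modules} together with the corollary on cluster-tilted algebras of type $\A$ just proved. By the construction recalled above, $KQ/I$ is the simple gluing algebra of $A:=KQ_1/I_1$ and $B:=KQ_2/\langle\varphi^2\rangle\cong K[X]/\langle X^2\rangle$, obtained by identifying the vertex $a$ of $Q_1$ with the unique vertex of the loop quiver $Q_2$. Before applying the machinery I would check that this gluing is admissible: the non-trivial paths in $Q_2$ from its vertex to itself are exactly the powers $\varphi^k$ with $k\geq1$, and $\varphi^2\in\langle\varphi^2\rangle$, so every such path lies in the ideal; hence the finite-dimensionality criterion is met and $KQ/I$ is genuinely a simple gluing algebra of $A$ and $B$ with glued vertex $a$.

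Next, Theorem \ref{theorem stable category of Cm modules} applied to this gluing yields
$$\underline{\Gproj}(KQ/I)\simeq\underline{\Gproj}(A)\coprod\underline{\Gproj}(B).$$
For the first summand I would invoke the corollary on cluster-tilted algebras of type $\A$ applied to $A=KQ_1/I_1$, giving $\underline{\Gproj}(A)\simeq\coprod_{t(Q_1)}\underline{\mod}H_2$. For the second summand, $B=K[X]/\langle X^2\rangle$ is self-injective, whence $\Gproj(B)=\mod B$ and therefore $\underline{\Gproj}(B)=\underline{\mod}K[X]/\langle X^2\rangle$.

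Finally, I would reconcile the index $t(Q)$ with $t(Q_1)$. Passing from $Q_1$ to $Q$ only adjoins the loop $\varphi$ at the vertex $a$, and no arrows of $Q_2$ other than $\varphi$ are introduced; a loop is an oriented cycle of length one and cannot participate in an oriented $3$-cycle, so the oriented $3$-cycles of $Q$ coincide with those of $Q_1$ and $t(Q)=t(Q_1)$. Substituting the two computed summands together with this equality into the displayed equivalence gives
$$\underline{\Gproj}(KQ/I)\simeq\coprod_{t(Q)}\underline{\mod}H_2\coprod\underline{\mod}K[X]/\langle X^2\rangle,$$
which is the assertion. Since the direct-sum decomposition of the stable category is supplied entirely by Theorem \ref{theorem stable category of Cm modules}, I expect no genuine difficulty here; the only points requiring care are the verification that the gluing is admissible and the elementary count showing $t(Q)=t(Q_1)$.
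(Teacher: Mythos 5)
Your overall route is exactly the paper's: realize $KQ/I$ as the simple gluing of the cluster-tilted algebra $A=KQ_1/I_1$ of type $\A_{n-1}$ with $B=K[X]/\langle X^2\rangle$ at the vertex $a$, apply Theorem \ref{theorem stable category of Cm modules}, quote the preceding corollary for $A$, and use self-injectivity of $B$. The count $t(Q)=t(Q_1)$ is also fine: cluster-tilted algebras have no loops or $2$-cycles, so the added loop cannot complete an oriented $3$-cycle.

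However, your admissibility check is wrong as stated. You claim that every non-trivial path in $Q_2$ from its vertex to itself lies in $\langle\varphi^2\rangle$; this fails for the path $\varphi$ itself, which does not belong to $\langle\varphi^2\rangle$ (if it did, the ideal would not be admissible and $B$ would collapse to $K$ rather than the $2$-dimensional algebra $K[X]/\langle X^2\rangle$). The finite-dimensionality criterion for simple gluings is a disjunction: it suffices that \emph{one} of the two sides has all its non-trivial cyclic paths at the gluing vertex inside its ideal. Here that side is $A$, not $B$: in the quiver of a cluster-tilted algebra of type $\A_{n-1}$ the only cycles are oriented $3$-cycles, and any non-trivial closed path at $a$ must contain a composition of two consecutive arrows of such a cycle, which is a relation in $I_1$. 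So the gluing is admissible for this reason, and this is also what makes the labelling consistent with the paper's standing convention that the first algebra $A$ is the one satisfying the cyclic-path condition; with that one-line correction, the rest of your argument goes through unchanged.
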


\section{Cohen-Macaulay Auslander algebras of simple gluing algebras}
In this section, we describe the Cohen-Macaulay Auslander algebras for CM-finite simple gluing algebras.

First, let us recall some results about almost split sequences and irreducible morphisms.

\begin{lemma}[see e.g. \cite{Liu}]\label{lemma relation of almost split sequences and irreducilbe morphisms}
Let $\ca$ be a Krull-Schmidt exact $K$-category with $0\rightarrow X\xrightarrow{f} Y \xrightarrow{g} Z\rightarrow0$ an almost split sequence.

(i) Up to isomorphism, the sequence is the unique almost split sequence starting with $X$ and the unique one ending with $Z$.

(ii) Each irreducible morphism $f_1:X\rightarrow Y_1$ or $g_1:Y_1\rightarrow Z$ fits into an almost split sequence
$$0\rightarrow X\xrightarrow{\left( \begin{array}{c} f_1\\ f_2 \end{array}\right)} Y_1\oplus Y_2\xrightarrow{(g_1,g_2)}Z\rightarrow0.$$
\end{lemma}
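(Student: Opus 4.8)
The plan is to derive everything from the defining properties of the given almost split sequence, namely that $f$ is (minimal) left almost split and $g$ is (minimal) right almost split, which hold automatically in a Krull-Schmidt exact category. Recall that this means: $f$ (resp. $g$) is not a split monomorphism (resp. epimorphism); every morphism out of $X$ that is not a split monomorphism factors through $f$; every morphism into $Z$ that is not a split epimorphism factors through $g$; and $f$ (resp. $g$) is left (resp. right) minimal, i.e. any endomorphism $\varphi$ of $Y$ with $\varphi f=f$ (resp. $g\varphi=g$) is an automorphism. Granting these structural facts, both parts become formal manipulations with split monomorphisms and epimorphisms.

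For (i), I would prove uniqueness among sequences starting with $X$; the dual statement for sequences ending with $Z$ is entirely symmetric using right minimality of $g$. So suppose $0\to X\xrightarrow{f'}Y'\xrightarrow{g'}Z'\to0$ is another almost split sequence. Since neither $f$ nor $f'$ is a split monomorphism and both are left almost split, $f$ factors through $f'$ and $f'$ factors through $f$, say $f=sf'$ and $f'=tf$ with $s\colon Y'\to Y$ and $t\colon Y\to Y'$. Then $stf=f$ and $tsf'=f'$, so left minimality of $f$ and of $f'$ forces $st$ and $ts$ to be automorphisms; hence $s$ (being both split epi and split mono) and likewise $t$ are isomorphisms. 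As $tf=f'$, the pair $(1_X,t)$ is an isomorphism of short exact sequences, and in particular $Z\cong Z'$.

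For (ii) I treat an irreducible morphism $f_1\colon X\to Y_1$; the case of $g_1\colon Y_1\to Z$ is dual, invoking right minimality of $g$. Since $f_1$ is irreducible it is not a split monomorphism, so by the left almost split property it factors as $f_1=hf$ for some $h\colon Y\to Y_1$. Applying irreducibility of $f_1$ to this factorization forces either $f$ to be a split monomorphism — impossible, since $f$ belongs to an almost split sequence — or $h$ to be a split epimorphism; thus $h$ is a split epimorphism. Its section then provides a direct sum decomposition $Y\cong Y_1\oplus Y_2$ with $Y_2=\ker h$, under which $h$ becomes the projection onto $Y_1$. Transporting $f$ and $g$ along this isomorphism, the first component of $f$ is $hf=f_1$, so $f$ is identified with $\binom{f_1}{f_2}$ and $g$ with $(g_1,g_2)$. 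The transported sequence is isomorphic to the original, hence again almost split, which is exactly the asserted form.

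The main obstacle I anticipate is justifying, at the right level of generality, the two structural inputs used above: that an almost split sequence in a Krull-Schmidt exact category automatically has left and right minimal end maps (needed for the automorphism conclusion in (i)), and the ``factor, then apply irreducibility'' mechanism identifying irreducible morphisms out of $X$ with split-epi components of the source map $f$ (needed in (ii)). Once these standard facts from Auslander-Reiten theory (cf. \cite{Liu}) are in place, the remaining arguments reduce to splitting the relevant idempotents, which is guaranteed since $\ca$ is Krull-Schmidt.
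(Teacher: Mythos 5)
The paper offers no proof of this lemma to compare against: it is quoted verbatim as a known result from \cite{Liu}, so the only question is whether your argument is sound, and it is. In (i), the mutual factorizations $f=sf'$, $f'=tf$ together with left minimality make $st$ and $ts$ automorphisms, and your observation that a morphism which is both a split monomorphism and a split epimorphism is invertible correctly yields that $s,t$ are isomorphisms; the identification $Z\cong Z'$ then follows because $g$ and $g'$ are cokernels of $f$ and $f'=tf$ in the exact structure. In (ii), factoring the irreducible $f_1$ through the left almost split $f$ and invoking irreducibility (with $f$ not a split monomorphism, since the sequence does not split) does force $h$ to be a split epimorphism, and your appeal to idempotent splitting is exactly the right justification for the decomposition $Y\cong Y_1\oplus Y_2$: in a general exact category a split epimorphism need not have a kernel, so it is the Krull--Schmidt (hence idempotent complete) hypothesis that splits $1_Y-sh$; transporting the conflation along this isomorphism preserves the almost split property and gives the stated shape. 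The one input you defer --- that the end maps of an almost split sequence in a Krull--Schmidt exact category are automatically left, respectively right, minimal --- is a genuine theorem rather than a triviality (it is where locality of the endomorphism rings of $X$ and $Z$ enters, and without it the automorphism step in (i) collapses), but it is precisely one of the basic results of \cite{Liu}, the same source the paper cites for the entire lemma, so treating it as a black box is consistent with the level at which the statement is being used.
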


Recall that for a CM-finite algebra $\Lambda$, $\Gproj\Lambda$ is a functorially finite subcategory of $\mod \Lambda$, which implies that $\Gproj\Lambda$ has almost split sequences, see \cite[Theorem 2.4]{AS}, and then $\underline{\Gproj}\Lambda$ has Auslander-Reiten triangles. $\Aus(\Gproj\Lambda)$ is isomorphic to the opposite algebra of $KQ^{\Aus}/I^{\Aus}$ for some ideal $I^{\Aus}$, see Chapter VII, Section 2 of \cite{ARS}.
For any irreducible morphism $f:U(i)\rightarrow U(j)$ in $\Gp \Lambda$, then it is irreducible in $\proj \Lambda$, and there exists some arrow $\alpha:j\rightarrow i$ such that $f$ is induced by $\alpha$.

In the following, we always assume that the simple gluing algebra $\Lambda$ is CM-finite. Since $\Lambda$ is CM-finite, Corollary \ref{corollary CM-finite CM-free} yields that both $A$ and $B$ are CM-finite.
Let $Q^{\Aus}_A$, $Q^{\Aus}_B$ and $Q^{\Aus}_\Lambda$ be the Auslander-Reiten quivers of $\Gproj A$, $\Gp B$ and $\Gp \Lambda$ respectively. First, we prove that $Q^{\Aus}_\Lambda$ is a simple gluing quiver of $Q^{\Aus}_A$ and $Q^{\Aus}_B$ by identifying the vertices corresponding to the indecomposable projective modules $P(a)$ and $Q(b)$. Before that, we give two lemmas.

\begin{lemma}\label{lemma functors preserve for irreducible morphisms between projective modules}
Let $\Lambda=KQ_\Lambda/I_\Lambda$ be a simple gluing algebra of the two finite-dimensional bound quiver algebras $A=KQ_A/I_A$ and $B=KQ_B/I_B$ by identifying $a\in Q_A$ and $b\in Q_B$. Then for any vertices $i,j$ in $Q_\Lambda$ and any arrow $\alpha: j\rightarrow i$ in $Q_\Lambda$, we have

(i) if $\alpha$ is in $Q_A$, and $f: P(i)\rightarrow P(j)$ is the irreducible morphism in $\Gp A$ induced by the arrow $\alpha$, then $i_\lambda(f):U(i)\rightarrow U(j)$ is irreducible.

(ii) if $\alpha$ is in $Q_B$, and $g: P(i)\rightarrow P(j)$ is the irreducible morphism in $\Gp B$ induced by the arrow $\alpha$, then $j_\lambda(g):U(i)\rightarrow U(j)$ is irreducible.

(iii) any irreducible morphism from $U(i)$ to $U(j)$ in $\Gproj\Lambda$ is of form $i_\lambda(f)$
for some irreducible morphism $f: P(i)\rightarrow P(j)$ in $\Gproj A$ or $j_\lambda(g)$ for some irreducible morphism $g: Q(i)\rightarrow Q(j)$ in $\Gp B$.
\end{lemma}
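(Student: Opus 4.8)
The plan is to establish (iii) first, since it is the formal statement, and then attack the forward directions (i) and (ii); by the symmetry between the roles of $A,i_\lambda,i_\mu$ and $B,j_\lambda,j_\mu$ it suffices to treat (i) and the $A$-half of (iii). The common starting point is an explicit identification: writing $P(i)=Ae_i$, the irreducible morphism $f$ induced by an arrow $\alpha\colon j\to i$ of $Q_A$ is right multiplication by $\alpha$, and applying $i_\lambda=\Lambda\otimes_A-$ produces exactly right multiplication by $\alpha$ on $U(i)=\Lambda e_i$. Since $\alpha$ is also an arrow of $Q_\Lambda$ (the arrows of $Q_\Lambda$ are just the disjoint union of those of $Q_A$ and $Q_B$), this identifies $i_\lambda(f)$ with the morphism $U(i)\to U(j)$ induced by $\alpha$, hence irreducible in $\proj\Lambda$. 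Thus the whole difficulty is the passage from irreducibility in $\proj\Lambda$ to irreducibility in $\Gp\Lambda$, i.e.\ ruling out factorizations through \emph{non-projective} Gorenstein projective modules.

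For (iii) I would argue by \emph{reflection}. Let $\phi\colon U(i)\to U(j)$ be irreducible in $\Gp\Lambda$. By the fact recalled just before the statement, $\phi$ is induced by some arrow $\alpha\colon j\to i$ of $Q_\Lambda$, and $\alpha$ lies either in $Q_A$ or in $Q_B$; in the first case the identification above gives $\phi=i_\lambda(f_0)$ for the morphism $f_0\colon P(i)\to P(j)$ induced by $\alpha$ in $\proj A$. It remains to see $f_0$ is irreducible in $\Gp A$. It is neither a section nor a retraction, since $i_\lambda$ preserves both and irreducibles are neither. Given a factorization $f_0=h'g'$ in $\Gp A$, applying $i_\lambda$ yields $i_\lambda(f_0)=i_\lambda(h')\,i_\lambda(g')$, so irreducibility of $\phi$ forces $i_\lambda(g')$ to be a section or $i_\lambda(h')$ a retraction; applying $i_\mu$ and invoking the block form $i_\mu i_\lambda(g')=\left(\begin{smallmatrix}g'&0\\0&h\end{smallmatrix}\right)$ of (\ref{equation form of morphism 1}), together with the elementary observation that a diagonal split monomorphism restricts to a split monomorphism on each block, we recover that $g'$ is a section (resp.\ $h'$ a retraction). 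Hence $f_0$ is irreducible in $\Gp A$, proving (iii).

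For (i) the functor $i_\mu$ runs the wrong way to lift sections \emph{up}, so instead I would show directly that $i_\lambda(f)\notin\rad^2_{\Gp\Lambda}(U(i),U(j))$ by means of the $K$-linear map $\Theta:=\pi_{P(j)}\circ i_\mu(-)\circ\mu_{P(i)}$ from $\rad_\Lambda(U(i),U(j))$ to $\irr_{\Gp A}(P(i),P(j))=\rad_A(P(i),P(j))/\rad^2_{\Gp A}(P(i),P(j))$, where $\mu_{P(i)},\pi_{P(j)}$ are the canonical split injection and projection from the natural isomorphism $i_\mu i_\lambda(M)\cong M\oplus P(a)^{\oplus s}$. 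On the one hand $\Theta(i_\lambda(f))=[f]$, which is nonzero precisely because $f$ is irreducible in $\Gp A$. On the other hand I claim $\Theta$ annihilates $\rad^2_{\Gp\Lambda}$. Here Corollary \ref{corollary factors through the gluing vertex} is the key tool: using the classification of indecomposable Gorenstein projectives in Theorem \ref{theorem stable category of Cm modules}, any composite $U(i)\to Z\to U(j)$ through an indecomposable summand $Z=j_\lambda(Y)$ on the $B$-side factors through $U(v)^{\oplus n}$, hence already lies in $\rad^2_{\proj\Lambda}$; so only composites through projectives and through non-projective modules $i_\lambda(X)$ survive, and for these $\Theta$ pushes the factorization down to a composite $P(i)\to i_\mu i_\lambda X=X\oplus P(a)^{\oplus r}\to P(j)$ in $\Gp A$, both of whose factors are radical, so it lies in $\rad^2_{\Gp A}$. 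Therefore $i_\lambda(f)$ is irreducible in $\Gp\Lambda$.

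The hard part is the behaviour at the \emph{glued vertex} $v$, i.e.\ the cases $i=v$ or $j=v$. There $U(v)=i_\lambda(P(a))=j_\lambda(Q(b))$ is shared by both sides, $P(i)$ or $P(j)$ equals $P(a)$, and the summand $P(a)^{\oplus r}$ in $i_\mu i_\lambda X=X\oplus P(a)^{\oplus r}$ can supply an \emph{isomorphism} component $P(a)\xrightarrow{\sim}P(a)$; then the pushed-down composite need no longer be radical and $\Theta$ may fail to kill $\rad^2_{\Gp\Lambda}$. I expect this to be the genuine obstacle, and I would resolve it by a vertex-local analysis using the explicit descriptions of $Q(v)$, $i_\lambda(\,_AS(a))$ and $j_\lambda(\,_BS(b))$ in Figure 1 to show that such spurious isomorphism components cannot arise for a radical map with source or target $U(v)$ coming from the $A$-side, so that the factorization still descends into $\rad^2_{\Gp A}$. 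Once the glued vertex is handled, (i) and, by the symmetric argument with $j_\lambda,j_\mu$, also (ii) follow.
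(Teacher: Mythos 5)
Your part (iii) is correct and is essentially the paper's own argument: an irreducible morphism $U(i)\rightarrow U(j)$ in $\Gproj\Lambda$ is irreducible in $\proj\Lambda$, hence induced by an arrow, and a factorization $f_0=h'g'$ in $\Gp A$ pulls up along $i_\lambda$ and descends again via the block form (\ref{equation form of morphism 1}). The problem is (i)--(ii), which is where the paper's proof does all of its real work, and there your proposal has a gap that you yourself flag: the glued-vertex case. This is not a removable technicality, and the patch you propose --- showing that ``spurious isomorphism components cannot arise for a radical map with source or target $U(v)$ coming from the $A$-side'' --- is in fact false. By adjunction, $\Hom_\Lambda(U(v),i_\lambda(X))\cong\Hom_A(P(a),\,X\oplus P(a)^{\oplus r})$, and \emph{every} pair $(g_X,g_P)$ arises this way; since $U(v)$ is projective and $i_\lambda(X)$ is not, every such morphism is automatically radical in $\Gp\Lambda$, including those whose component $g_P\colon P(a)\rightarrow P(a)^{\oplus r}$ contains an isomorphism. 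So the unit components you hope to exclude do occur, and your map $\Theta$ does not kill $\rad^2_{\Gp\Lambda}$ for the reason you give.

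What actually saves this case --- and what your sketch never invokes --- is the hypothesis $I_\Lambda=\langle I_A,I_B\rangle$, i.e.\ that no relation mixes the two sides. Concretely, the complement $P(a)^{\oplus r}$ in $i_\mu i_\lambda(X)=X\oplus P(a)^{\oplus r}$ is generated at the vertex $v$ by elements $c\cdot x$ with $c$ a nontrivial cycle of $B$ at $b$, and any $\Lambda$-morphism $h$ satisfies $h(c\cdot x)=c\cdot h(x)$, hence carries this ``$B$-cycle part'' into the $B$-cycle part of the target, which is exactly the summand killed by $\pi_{P(j)}$; therefore the dangerous products $h_Pg_P$ vanish \emph{regardless} of unit components in $g_P$. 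Some argument of this kind is unavoidable: Examples \ref{example 1} and \ref{example 2} show that once relations are allowed to mix the sides, the structure theory underlying the lemma collapses, so a proof that never uses $I_\Lambda=\langle I_A,I_B\rangle$ cannot be correct. For comparison, the paper's proof of (i) runs quite differently: given a factorization $i_\lambda(f)=hg$ through $i_\lambda(L)\oplus j_\lambda(M)$, it splits off projective parts, embeds the non-projective Gorenstein projective parts between projectives, lifts to a factorization in $\proj\Lambda$, applies irreducibility of $i_\lambda(f)$ there, and eliminates the $B$-side case by precisely the relation argument above (an arrow $\alpha$ of $Q_A$ cannot be congruent modulo $I_\Lambda$ to a combination of paths passing through $Q_B$). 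Your $\Theta$-approach could be completed into a genuinely different and rather clean proof, but only after establishing the $B$-cycle-part lemma; as written, (i) and (ii) are not proved.
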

\begin{proof}
(i)
if $i_\lambda(f)$ factors through an object in $\Gp(\Lambda)$, then by Theorem \ref{theorem stable category of Cm modules}, it is of form $i_\lambda(f)=hg$ for some $i_\lambda(P(i))\xrightarrow{g} i_\lambda(L)\oplus j_\lambda(M)\xrightarrow{h} i_\lambda(P(j))$, where $L\in \Gp(A)$, and $M\in\Gp(B)$. We only need to prove that either $g$ is a section or $h$ is a retraction.

Since $f$ is induced by an arrow $\alpha: j\rightarrow i$ in $Q_A$, it is easy to see that $i_\lambda(f): U(i)\rightarrow U(j)$ is the morphism induced by the arrow $\alpha:j\rightarrow i$ in $Q_\Lambda$, which is irreducible in $\proj \Lambda$.
We assume that $L=L_p\oplus L_g$ where $L_p$ is projective, $L_g$ satisfies that all its indecomposable direct summands are non-projective, and
$M=M_p\oplus M_g$ where $M_p$ is projective, $M_g$ satisfies that all its indecomposable direct summands are non-projective.
 So there exist the following exact sequences
$$0\rightarrow L'_g\xrightarrow{k_0} P_0\xrightarrow{k_1} L_g\rightarrow0,\mbox{ and }0\rightarrow L_g\xrightarrow{l_0} P_1\xrightarrow{l_1} L''_g\rightarrow0,$$
with $P_0,P_1\in\proj A$ and $L'_g,L''_g\in\Gproj A$.
Similarly, there are exact sequences
$$0\rightarrow M'_g\xrightarrow{s_0} Q_0\xrightarrow{s_1} M_g\rightarrow0,\mbox{ and }0\rightarrow M_g\xrightarrow{t_0} Q_1\xrightarrow{t_1} M''_g\rightarrow0,$$
with $Q_0,Q_1\in\proj B$ and $M'_g,M''_g\in\Gproj B$.
Since $i_\lambda$ and $j_\lambda$ are exact, preserve projective modules, there exist the following short exact sequences
\begin{eqnarray*}
&&0\rightarrow i_\lambda(L'_g) \oplus j_\lambda(M'_g)\xrightarrow{\tiny\left(\begin{array}{cc}0&0\\i_\lambda(k_0)&0\\
0&0\\ 0& j_\lambda(s_0)\end{array}\right)} i_\lambda(L_p)\oplus i_\lambda(P_0)\oplus j_\lambda(M_p)\oplus j_\lambda(Q_0)\\
&&\xrightarrow{\tiny\left(\begin{array}{cccc}1&&&\\&i_\lambda(k_1)&&\\
&&1&\\ &&& j_\lambda(s_1)\end{array}\right)} i_\lambda(L_p)\oplus i_\lambda(L_g) \oplus j_\lambda(M_p)\oplus j_\lambda(M_g)\rightarrow0,
\end{eqnarray*}
and
\begin{eqnarray*}
&&0\rightarrow  i_\lambda(L_p)\oplus i_\lambda(L_g) \oplus j_\lambda(M_p)\oplus j_\lambda(M_g)\xrightarrow{\tiny\left(\begin{array}{cccc}1&&&\\&i_\lambda(l_0)&&\\
&&1&\\ &&& j_\lambda(t_0)\end{array}\right)} \\
&&i_\lambda(L_p)\oplus i_\lambda(P_1)\oplus j_\lambda(M_p)\oplus j_\lambda(Q_1)\xrightarrow{\tiny\left(\begin{array}{cccc}0&i_\lambda(l_1)&0&0\\0&0&0&j_\lambda(t_1)\end{array}\right)} i_\lambda(L''_g) \oplus j_\lambda(M''_g)\rightarrow0.
\end{eqnarray*}

It is easy to see that there exist morphisms $p: i_\lambda(P(i))\rightarrow i_\lambda(L_p)\oplus i_\lambda(P_0)\oplus j_\lambda(M_p)\oplus j_\lambda(Q_0)$ and
$q: i_\lambda(L_p)\oplus i_\lambda(P_1)\oplus j_\lambda(M_p)\oplus j_\lambda(Q_1)\rightarrow i_\lambda(P(i))$
such that
$$g= \left(\begin{array}{cccc}1&&&\\&i_\lambda(k_1)&&\\
&&1&\\ &&& j_\lambda(s_1)\end{array}\right)p,
\mbox{ and }
h=q\left(\begin{array}{cccc}1&&&\\&i_\lambda(l_0)&&\\
&&1&\\ &&& j_\lambda(t_0)\end{array}\right). $$
Therefore,
$$i_\lambda(f)=hg=q\left(\begin{array}{cccc}1&&&\\&i_\lambda(l_0 k_1) &&\\
&&1&\\ &&& j_\lambda(t_0s_1)\end{array}\right)p.$$
Since $i_\lambda(f)$ is irreducible in $\proj\Lambda$, we get that either $p$ is a section or $q$ is a retraction.

For $p$ is a section, if $p$ induces $i_\lambda(P(i))$ to be a direct summand of $i_\lambda(L_p)$ or a direct summand of $j_\lambda(M_p)$, it is easy to see that $g$ is a section.
If $p$ induces $i_\lambda(P(i))$ to be a direct summand of $i_\lambda(P_g)$, then
$$\left(\begin{array}{cccc}1&&&\\&i_\lambda(l_0k_1)&&\\
&&1&\\ &&& j_\lambda(t_0s_1)\end{array}\right)p$$
is not a section, which yields that $q$ is a retraction, in particular, $q$ induces that $i_\lambda(P(j))$ is a direct summand of $i_\lambda (P_1)$. So $f$ factors through $L_g$ which implies that $f$ is not irreducible in $\Gp A$ since $L_g$ satisfies that all its indecomposable direct summands are non-projective, a contradiction.
If $p$ induces $i_\lambda(P(i))$ to be a direct summand of $j_\lambda(M_g)$, then we get that $i_\lambda(f)$ factors through $j_\lambda(t_0s_1)$. From $t_0s_1:Q_0\rightarrow Q_1$ factors through $M_g$, it is easy to see that there exists an element $w=\sum_{i=0}^n d_i w_i\in KQ_\Lambda$ with $d_i\in K$, $w_i$ a path containing as least one arrow in $Q_B$ for each $1\leq i\leq n$, such that $\alpha-w\in I_\Lambda$, which is impossible.

For $q$ is a retraction, it is dual to the above, we omit the proof here.
So $i_\lambda(f)$ is irreducible.

For (ii), it is similar to (i).

(iii) for any irreducible morphism $\psi$ from $U(i)$ to $U(j)$ in $\Gproj\Lambda$, then it is also irreducible in $\proj\Lambda$, which is induced by an arrow $\alpha:j\rightarrow i$. If $\alpha$ is in $Q_A$, then $\psi=i_\lambda(f)$ where $f$ is the irreducible morphism $f:P(i)\rightarrow P(j)$ in $\proj A$ induced by $\alpha$. Suppose for a contradiction that there exist morphisms $f_1$ and $f_2$ in $\Gproj A$ such that $f=f_2f_1$ with neither $f_1$ a section nor $f_2$ a retraction. Then it is easy to see that neither $i_\lambda (f_1)$ is a section nor $i_\lambda(f_2)$ a retraction, and $i_\lambda(f)=i_\lambda(f_2) i_\lambda(f_1)$, a contradiction to that $i_\lambda(f)$ is irreducible in $\Gp \Lambda$.

If $\alpha$ is in $Q_B$, then we can prove it similarly.
\end{proof}

\begin{lemma}\label{lemma functors preserve for irreducible morphisms between non-projective modules}
Let $\Lambda=KQ_\Lambda/I_\Lambda$ be a simple gluing algebra of the two finite-dimensional bound quiver algebras $A=KQ_A/I_A$ and $B=KQ_B/I_B$ by identifying $a\in Q_A$ and $b\in Q_B$. If $\Lambda$ is CM-finite, then

(i) for any irreducible morphism $f:X\rightarrow Y$ in $\Gp A$ with either $X$ or $Y$ not projective, we have that $i_\lambda(f)$ is irreducible in $\Gproj \Lambda$;

(ii) for any irreducible morphism $g:L\rightarrow M$ in $\Gp B$ with either $L$ or $M$ not projective, we have that $j_\lambda(g)$ is irreducible in $\Gproj \Lambda$;

(iii) any irreducible morphism in $\Gproj\Lambda$ is of form $i_\lambda(f)$
for some irreducible morphism $f: X\rightarrow Y$ in $\Gproj A$ or $j_\lambda(g)$ for some irreducible morphism $g: L\rightarrow M$ in $\Gp B$.
\end{lemma}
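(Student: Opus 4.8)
The plan is to deduce all three parts from a single claim: the exact functors $i_\lambda$ and $j_\lambda$ carry almost split sequences of $\Gp A$ and $\Gp B$ to almost split sequences of $\Gp\Lambda$. Once this is in hand, Lemma \ref{lemma relation of almost split sequences and irreducilbe morphisms} does the rest. For (i), assume say $X$ is non-projective and take the almost split sequence $0\to X\xrightarrow{\alpha} E\to Z\to0$ of $\Gp A$ starting at $X$; by Lemma \ref{lemma relation of almost split sequences and irreducilbe morphisms}(ii), $Y$ is a summand of $E$ and $f$ is the corresponding component of $\alpha$. Applying $i_\lambda$ and using the claim yields an almost split sequence in which $i_\lambda Y$ is a summand of the middle term and $i_\lambda(f)$ the corresponding component, so Lemma \ref{lemma relation of almost split sequences and irreducilbe morphisms}(ii) forces $i_\lambda(f)$ to be irreducible. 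The case ``$Y$ non-projective'' is dual (use the sequence ending at $Y$), and (ii) is identical with $j_\lambda$ in place of $i_\lambda$.

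For (iii), let $\psi\colon Z_1\to Z_2$ be irreducible with $Z_1,Z_2$ indecomposable. If both are projective this is exactly Lemma \ref{lemma functors preserve for irreducible morphisms between projective modules}(iii), so assume at least one is non-projective; by Theorem \ref{theorem stable category of Cm modules} each $Z_k$ is of $i_\lambda$- or $j_\lambda$-type. If $Z_2$ is non-projective, write $Z_2\cong i_\lambda Y$ with $Y$ non-projective (the $j_\lambda$ case is symmetric). By the central claim together with the uniqueness in Lemma \ref{lemma relation of almost split sequences and irreducilbe morphisms}(i), the almost split sequence ending at $Z_2$ is the $i_\lambda$-image of the one ending at $Y$, so every irreducible morphism into $Z_2$ is $i_\lambda$ of an irreducible morphism into $Y$; in particular $\psi\cong i_\lambda(f)$. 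If $Z_2$ is projective, then $Z_1$ is non-projective and one argues symmetrically using the almost split sequence starting at $Z_1$. A by-product is that no irreducible morphism joins an $A$-side to a $B$-side indecomposable except through $U(v)$: by Corollary \ref{corollary factors through the gluing vertex} such a morphism factors through a projective $U(v)^{\oplus n}$, so (as $U(v)$ is projective and neither endpoint is) it would force a section or a retraction into a projective, impossible between two non-projective indecomposables.

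It remains to prove the central claim, which I expect to be the main obstacle. Let $0\to X\xrightarrow{\alpha} E\xrightarrow{\beta} Z\to0$ be almost split in $\Gp A$; its ends are non-projective, so $i_\lambda X$ and $i_\lambda Z$ are indecomposable and non-projective, and the image sequence is exact and lies in $\Gp\Lambda$ by Lemma \ref{lemma the upper four functors presereve Gorenstein projectives}. It is non-split: applying $i_\mu$ and using (\ref{equation form of morphism 1}), $i_\mu i_\lambda$ of the sequence is the original one plus a split sequence of copies of $P(a)$, so a splitting upstairs would split the original. The heart of the matter is to show $i_\lambda\beta$ is right almost split. Let $t\colon T\to i_\lambda Z$ be a non-retraction with $T$ indecomposable. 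If $T=j_\lambda M$, then by Corollary \ref{corollary factors through the gluing vertex} $t$ factors through a projective $U(v)^{\oplus n}$, which lifts along the epimorphism $i_\lambda\beta$, so $t$ factors through $i_\lambda\beta$. If $T=i_\lambda T'$, the adjunction decomposition $\Hom_\Lambda(i_\lambda T',i_\lambda Z)\cong\Hom_A(T',Z)\oplus\Hom_A(T',P(a)^{\oplus s})$ writes $t=i_\lambda(t')+t_0$, where $t_0$ factors through $U(v)^{\oplus s}$ and hence through $i_\lambda\beta$ as before; moreover $t'$ cannot be a retraction (otherwise, since the correction $t_0$ lies in the radical, $t$ itself would be one), so right almost splitness of $\beta$ gives $t'=\beta s'$ and $i_\lambda(t')=i_\lambda\beta\circ i_\lambda(s')$. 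In all cases $t$ factors through $i_\lambda\beta$. The left almost split property of $i_\lambda\alpha$ follows symmetrically from the dual of Corollary \ref{corollary factors through the gluing vertex}, and the same arguments apply verbatim to $j_\lambda$. The delicate points are the precise bookkeeping of the adjunction decomposition and the verification that the projective correction terms can always be absorbed through $i_\lambda\beta$; these are precisely where projectivity of $U(v)$ and the factorization of cross-morphisms through $U(v)$ enter.
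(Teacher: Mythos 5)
Your proposal is correct in substance and follows the same skeleton as the paper: both arguments reduce (i)--(iii) to the central claim that $i_\lambda$ and $j_\lambda$ send almost split sequences to almost split sequences, and then harvest (i)--(iii) from Lemma \ref{lemma relation of almost split sequences and irreducilbe morphisms}. But you prove the central claim by a genuinely different route. The paper passes to the stable categories: by Theorem \ref{theorem stable category of Cm modules} the functor $\tilde{i}_\lambda$ embeds $\underline{\Gp}(A)$ as a coproduct factor of $\underline{\Gp}(\Lambda)$, hence preserves Auslander-Reiten triangles; the paper then lifts the resulting stable factorization back to $\Gp\Lambda$ by using projectivity to correct for morphisms factoring through projectives, and for (iii) it uses that every AR triangle of $\underline{\Gp}(\Lambda)$ lies in one factor together with the correspondence between AR triangles and almost split sequences. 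You instead stay entirely at the module level: the adjunction decomposition $\Hom_\Lambda(i_\lambda T',i_\lambda Z)\cong\Hom_A(T',Z)\oplus\Hom_A(T',P(a)^{\oplus s})$, Corollary \ref{corollary factors through the gluing vertex} (and the remark following it) for cross morphisms, and projectivity of $U(v)$ to absorb correction terms through the epimorphism $i_\lambda(\beta)$; your verification of this right-almost-split step (including the radical argument showing $t'$ cannot be a retraction) is sound. Your treatment of (iii) via uniqueness of almost split sequences (Lemma \ref{lemma relation of almost split sequences and irreducilbe morphisms}(i)) plus the classification of indecomposables from Theorem \ref{theorem stable category of Cm modules} is arguably more elementary than the paper's detour through stable AR triangles. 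What each buys: the paper's route is shorter because it recycles Theorem \ref{theorem stable category of Cm modules} wholesale, but it leans on stable-category AR theory; yours is self-contained in $\mod\Lambda$, at the cost of the bookkeeping you flag.

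One step needs more care than ``follows symmetrically'': left almost splitness of $i_\lambda(\alpha)$ is not literally dual to your right-hand argument, because projectivity of $U(v)$ lets you lift a correction term through the epimorphism $i_\lambda(\beta)$, but gives no way to push a map $i_\lambda(X)\rightarrow U(v)^{\oplus n}$ through the monomorphism $i_\lambda(\alpha)$. Two standard repairs: (1) observe that any map $X\rightarrow P$ with $P$ projective is a non-section (as $X$ is non-projective), hence already factors through $\alpha$ in $\Gp A$ by left almost splitness there, and transport this factorization through the adjunction naturality; or (2) avoid the left side altogether by invoking the standard fact (valid in Krull-Schmidt exact categories, cf. \cite{AS,Liu}) that a non-split conflation with indecomposable end terms whose deflation is right almost split is automatically almost split. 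With either repair your argument is complete.
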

\begin{proof}
Since $\Lambda$ is CM-finite, Corollary \ref{corollary CM-finite CM-free} yields that both $A$ and $B$ are CM-finite. Then all the categories $\Gp A$, $\Gp B$ and $\Gp \Lambda$ have almost split sequences.

First, we prove that $i_\lambda$ preserves almost split sequences.
For any almost split sequence
$$0\rightarrow L\xrightarrow{f} M\xrightarrow{g} N\rightarrow0$$
in $\Gp(\Lambda)$, obviously, $L$ and $N$ are indecomposable, and
$$L\xrightarrow{\tilde{f}} M \xrightarrow{\tilde{g}}N\rightarrow \Sigma L$$
is an Auslander-Reiten triangle in $\underline{\Gp}(\Lambda)$, where $\Sigma$ is its suspension functor.
Obviously.
$i_\lambda(L)$ and $i_\lambda(N)$ are indecomposable modules. Since $i_\lambda$ is exact, the sequence
\begin{equation}\label{equation 6}
0\rightarrow i_\lambda (L)\xrightarrow{i_\lambda(f)} i_\lambda(M)\xrightarrow{i_\lambda(g)} i_\lambda(N)\rightarrow0
\end{equation}
is exact. We claim that the sequence (\ref{equation 6}) is almost split. From Theorem \ref{theorem stable category of Cm modules}, we get that
$\tilde{i}_\lambda$ preserves Auslander-Reiten triangles, and then
\begin{equation}\label{equation 7}
i_\lambda (L)\xrightarrow{\widetilde{i_\lambda(f)}} i_\lambda(M) \xrightarrow{\widetilde{i_\lambda(g)}}i_\lambda(N)\rightarrow \Sigma L
\end{equation}
is an Auslander-Reiten triangles in $\underline{\Gp}(\Lambda)$.
For any morphism $h:Z\rightarrow i_\lambda(N)$ which is not a retraction, then in $\underline{\Gp}(\Lambda)$, $\tilde{h}$ is also not a retraction. The Auslander-Reiten triangle (\ref{equation 7}) yields that $\tilde{h}$ factors through $\widetilde{i_\lambda(g)}$ as $\tilde{h}=\widetilde{i_\lambda(g)}\tilde{l}$ for some morphism $l: Z\rightarrow i_\lambda(M)$.
Then there exists a projective $\Lambda$-module $U$ such that $h=i_\lambda(g) l+p_2p_1$ for some morphism $p_1:Z\rightarrow U$ and $p_2:U\rightarrow i_\lambda(N)$ as the following diagram shows
\[\xymatrix{ i_\lambda(L) \ar[r]^{i_\lambda(f)} & i_\lambda(M) \ar[r]^{i_\lambda(g)}& i_\lambda(N) \\
&Z\ar[r]^{p_1} \ar[u]^l \ar[ur]^{h} &U.\ar[u]^{p_2} }\]
Since $U$ is projective, we get that $p_2$ factors through $i_\lambda(g)$ as $p_2i_\lambda(g) p_3$ for some morphism $p_3:U\rightarrow i_\lambda(M)$. Then
$h=i_\lambda(g) l+p_2p_1=i_\lambda(g)( l+p_3p_1)$ which implies that $i_\lambda(g)$ is a right almost split morphism. Therefore, the sequence (\ref{equation 6})
is almost split in $\Gproj\Lambda$.

Similarly, we get that $j_\lambda$ preserves almost split sequences.

Theorem \ref{theorem stable category of Cm modules} also implies that for any Auslander-Reiten triangles in $\underline{\Gproj}\Lambda$, this triangle is either from an Auslander-Reiten triangle in $\underline{\Gp}(A)$ mapped by $\tilde{i}_\lambda$ or from an Auslander-Reiten triangle in $\underline{\Gp}(B)$ mapped by $\tilde{j}_\lambda$.
Note that all the Auslander-Reiten triangles in the stable categories are induced by almost split sequences. So any almost split sequence in $\Gp(\Lambda)$ is either of form
$$0\rightarrow i_\lambda(L)\xrightarrow{i_\lambda(u)} i_\lambda(M) \xrightarrow{i_\lambda(v)} i_\lambda(N)\rightarrow0 $$
where $0\rightarrow L\xrightarrow{u} M\xrightarrow{v}N\rightarrow0$ is an almost split sequence in $\Gp(A)$;
or of form $$0\rightarrow j_\lambda(X)\xrightarrow{j_\lambda(u)} j_\lambda(Y) \xrightarrow{j_\lambda(v)} j_\lambda(Z)\rightarrow0 $$
where $0\rightarrow X\xrightarrow{u} Y\xrightarrow{v}Z\rightarrow0$ is an almost split sequence in $\Gp(B)$.

(i) for any irreducible morphism $f:X\rightarrow Y$ in $\Gproj A$, if $X$ is not projective, then there is an almost split sequence starting with $X$, which is of form $$0\rightarrow X\xrightarrow{\left( \begin{array}{c} f\\ f_2 \end{array}\right)} Y_1\oplus Y_2\xrightarrow{(g_1,g_2)}Z\rightarrow0.$$
by Lemma \ref{lemma relation of almost split sequences and irreducilbe morphisms} (ii). From above, we get an almost split sequence in $\Gproj \Lambda$
$$0\rightarrow i_\lambda X\xrightarrow{\left( \begin{array}{c} i_\lambda(f)\\ i_\lambda(f_2) \end{array}\right)} i_\lambda(Y_1)\oplus i_\lambda(Y_2)\xrightarrow{(i_\lambda(g_1),i_\lambda(g_2))}i_\lambda Z\rightarrow0.$$
From it, it is easy to see that $i_\lambda(f)$ is irreducible. If $Y$ is not projective, we can prove it similarly.

(ii) is similar to (i).

(iii) for any irreducible morphism $\psi: W\rightarrow Z$ in $\Gp \Lambda$, if $W,Z\in \proj \Lambda$, then it follows from Lemma \ref{lemma functors preserve for irreducible morphisms between projective modules} (iii) immediately. Otherwise, if $W$ is not projective, then Theorem \ref{theorem stable category of Cm modules} shows that $W$ is of form $i_\lambda (X)$ or $j_\lambda(L)$ for some non-projective indecomposable Gorenstein projective modules $X\in \Gproj A$ and $L\in \Gproj B$. We only prove for the case $W=i_\lambda(X)$, since the other one is similar.
Then there is an almost split sequence starting with $i_\lambda (X)$, by the above, it is of form
$$0\rightarrow i_\lambda(X)\xrightarrow{i_\lambda(f_1)} i_\lambda(Y_1) \xrightarrow{i_\lambda(g_1)} i_\lambda(Z_1)\rightarrow0.$$
From Lemma \ref{lemma relation of almost split sequences and irreducilbe morphisms} (ii), it is easy to see that $\psi=i_\lambda (f)$ for some $i_\lambda (f):i_\lambda (X)\rightarrow i_\lambda(Y)$, where $Y$ is an indecomposable direct summand of $Y_1$. Since $i_\lambda(f)$ is irreducible, similar to the proof of Lemma \ref{lemma functors preserve for irreducible morphisms between projective modules} (iii), we get that $f$ is irreducible in $\Gp A$.

For the case when $Z$ is not projective, we can prove it similarly.
\end{proof}

Recall that for any CM-finite algebra $\Lambda$, the Auslander-Reiten quiver of $\Gproj \Lambda$ is formed by indecomposable objects and irreducible morphisms in $\Gproj\Lambda$. So we get the following result by Lemma \ref{lemma functors preserve for irreducible morphisms between projective modules} and Lemma \ref{lemma functors preserve for irreducible morphisms between non-projective modules} immediately.
\begin{proposition}\label{proposition simple gluing of AR-quivers}
Let $\Lambda=KQ_\Lambda/I_\Lambda$ be a simple gluing algebra of the two finite-dimensional bound quiver algebras $A=KQ_A/I_A$ and $B=KQ_B/I_B$ by identifying $a\in Q_A$ and $b\in Q_B$.
Then $\Lambda$ is CM-finite if and only if $A$ and $B$ are CM-finite. In this case, the Auslander-Reiten quiver $Q^{\Aus}_\Lambda$ of $\Lambda$ is a simple gluing quiver of the Auslander-Reiten quiver $Q^{\Aus}_A$ of $A$ and the Auslander-Reiten quiver $Q^{\Aus}_B$ of $B$ by identifying the vertices corresponding to the indecomposable projective modules $P(a)$ and $Q(b)$.
\end{proposition}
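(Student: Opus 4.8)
The plan is to dispose of the CM-finite equivalence immediately and then to construct an isomorphism of quivers between $Q^{\Aus}_\Lambda$ and the simple gluing of $Q^{\Aus}_A$ and $Q^{\Aus}_B$ by matching vertices and arrows separately. The first assertion is nothing but Corollary \ref{corollary CM-finite CM-free}(ii), so from now on I assume $\Lambda$, $A$ and $B$ are all CM-finite; in particular the three categories $\Gp A$, $\Gp B$, $\Gp\Lambda$ admit almost split sequences and their Auslander--Reiten quivers are well defined.

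For the vertices I would invoke Theorem \ref{theorem stable category of Cm modules}: every indecomposable object of $\Gp\Lambda$ is either $i_\lambda(X)$ with $X\in\Ind(\Gp A)$ or $j_\lambda(Y)$ with $Y\in\Ind(\Gp B)$, and both $i_\lambda,j_\lambda$ send indecomposables to indecomposables. It then remains to check that $i_\lambda,j_\lambda$ are injective on isomorphism classes, and that the images $\Im(i_\lambda)$ and $\Im(j_\lambda)$ meet, on indecomposables, exactly in the single module $U(v)=i_\lambda(P(a))=j_\lambda(Q(b))$. Both follow by applying $i_\mu$ (resp. $j_\mu$) together with Krull--Schmidt: since $i_\mu i_\lambda X\cong X\oplus P(a)^{\oplus s}$, an isomorphism $i_\lambda(X)\cong i_\lambda(X')$ forces $X\cong X'$, while an isomorphism $i_\lambda(X)\cong j_\lambda(Y)$ makes $i_\mu j_\lambda(Y)=P(a)^{\oplus m}$ projective, so $i_\lambda(X)$ is projective, whence $X=P(i)$, $Y=Q(i)$ with $i\in\ver(Q_A)\cap\ver(Q_B)=\{v\}$, i.e. $X=P(a)$, $Y=Q(b)$. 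This identifies the vertex set of $Q^{\Aus}_\Lambda$ with the gluing of those of $Q^{\Aus}_A$ and $Q^{\Aus}_B$ along $[P(a)]=[Q(b)]\mapsto[U(v)]$.

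For the arrows, Lemmas \ref{lemma functors preserve for irreducible morphisms between projective modules} and \ref{lemma functors preserve for irreducible morphisms between non-projective modules} do the heavy lifting: together they say that $i_\lambda,j_\lambda$ carry irreducible morphisms to irreducible morphisms and that every irreducible morphism in $\Gp\Lambda$ is of the form $i_\lambda(f)$ or $j_\lambda(g)$, the projective-to-projective case being covered by the former lemma and the case with a non-projective endpoint by the latter. To upgrade this into an arrow bijection with the correct multiplicities I would use that $i_\lambda,j_\lambda$ \emph{preserve almost split sequences} (established inside the proof of Lemma \ref{lemma functors preserve for irreducible morphisms between non-projective modules}): an almost split sequence $0\to X\to\bigoplus_i Y_i^{a_i}\to\tau^{-1}X\to 0$ in $\Gp A$ maps to $0\to i_\lambda X\to\bigoplus_i i_\lambda(Y_i)^{a_i}\to i_\lambda(\tau^{-1}X)\to 0$, and since the $i_\lambda(Y_i)$ are pairwise non-isomorphic indecomposables (by the vertex step) the multiplicity $a_i$, namely the number of arrows $i_\lambda X\to i_\lambda Y_i$, agrees with the number of arrows $X\to Y_i$. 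Arrows between two projective vertices (which belong to no almost split sequence) are instead counted directly via Lemma \ref{lemma functors preserve for irreducible morphisms between projective modules} and the decomposition $\arr(Q_\Lambda)=\arr(Q_A)\sqcup\arr(Q_B)$.

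The point needing the most care, and the one I expect to be the main obstacle, is ruling out any arrow of $Q^{\Aus}_\Lambda$ running strictly between the $A$-part and the $B$-part, i.e. showing that no irreducible morphism connects an $i_\lambda(X)$ to a $j_\lambda(Y)$ unless one endpoint is the glued object $U(v)$. This is exactly what Corollary \ref{corollary factors through the gluing vertex} delivers: any $f\in\Hom_\Lambda(i_\lambda X,j_\lambda Y)$ factors as $i_\lambda X\xrightarrow{g}U(v)^{\oplus n}\xrightarrow{h}j_\lambda Y$, so if $f$ is irreducible then $g$ is a section or $h$ a retraction, forcing $i_\lambda X\cong U(v)$ or $j_\lambda Y\cong U(v)$. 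Hence every arrow not incident to $[U(v)]$ lies entirely in the $A$-part or entirely in the $B$-part, while the arrows incident to $[U(v)]$ are precisely those inherited from $[P(a)]$ in $Q^{\Aus}_A$ and from $[Q(b)]$ in $Q^{\Aus}_B$. Assembling the vertex bijection with this partitioned, multiplicity-preserving arrow bijection shows that $Q^{\Aus}_\Lambda$ is the simple gluing of $Q^{\Aus}_A$ and $Q^{\Aus}_B$ at $[P(a)]=[Q(b)]$, as claimed.
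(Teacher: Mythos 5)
Your proposal is correct and follows essentially the same route as the paper: CM-finiteness via Corollary \ref{corollary CM-finite CM-free}, vertices via Theorem \ref{theorem stable category of Cm modules}, and arrows via Lemmas \ref{lemma functors preserve for irreducible morphisms between projective modules} and \ref{lemma functors preserve for irreducible morphisms between non-projective modules}. The paper's own proof is just a short citation of these lemmas, so your extra care about arrow multiplicities (via preservation of almost split sequences, proved inside Lemma \ref{lemma functors preserve for irreducible morphisms between non-projective modules}) and the exclusion of cross arrows (via Corollary \ref{corollary factors through the gluing vertex}, which the paper instead gets from part (iii) of the lemmas combined with the vertex identification) simply makes explicit what the paper leaves implicit.
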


Recall that for any $M\in\mod A$ and $N\in\mod \Lambda$, we denote by
$$\alpha_{M,N}: \Hom_\Lambda(i_\lambda(M),N)\xrightarrow{\sim} \Hom_A(M,i_\mu(N))$$
the adjoint isomorphism.
For any $M\in \mod A$, we denote by $\mu_M= \alpha_{M, i_\lambda (M)} (1_{i_\lambda (M)})$ the adjunction morphism.
For any $L\in\mod B$ and $N\in\mod \Lambda$ we denote by
$$\beta_{L,N}: \Hom_\Lambda(j_\lambda(L),N)\xrightarrow{\sim} \Hom_B(L,i_\mu(N))$$
the adjoint isomorphism.

\begin{lemma}\label{lemma characterization of ideal 1}
Let $\Lambda=KQ_\Lambda/I_\Lambda$ be a simple gluing algebra of the two finite-dimensional bound quiver algebras $A=KQ_A/I_A$ and $B=KQ_B/I_B$ by identifying $a\in Q_A$ and $b\in Q_B$.
Let $X\in \Gp A$ and $Y\in \Gp B$ be indecomposable Gorenstein projective modules. For any morphisms $f_i:X\rightarrow P(a)$ and $g_i:Q(b)\rightarrow Y$ where $1\leq i\leq n$, if $f_1,\dots,f_n$ are linearly independent and $\sum_{i=1}^n j_\lambda(g_i)i_\lambda(f_i)=0$, then $g_i=0$ for any $1\leq i\leq n$.
\end{lemma}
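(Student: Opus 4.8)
The plan is to transport the given relation through the adjunction $(i_\lambda,i_\mu)$ and to exploit that the common object $i_\lambda(P(a))=U(v)=j_\lambda(Q(b))$ has a one-dimensional endomorphism algebra. I first identify $i_\lambda(P(a))=j_\lambda(Q(b))=U(v)$, so that for each $g_i$ the map $j_\lambda(g_i)$ is a morphism $i_\lambda(P(a))\to j_\lambda(Y)$. Applying the adjoint isomorphism $\alpha_{X,j_\lambda(Y)}\colon \Hom_\Lambda(i_\lambda(X),j_\lambda(Y))\xrightarrow{\sim}\Hom_A(X,i_\mu j_\lambda(Y))$ to each composite $j_\lambda(g_i)i_\lambda(f_i)$ and using the naturality of the unit $\mu$ (which gives $i_\mu(i_\lambda(f_i))\circ\mu_X=\mu_{P(a)}\circ f_i$), I would rewrite the hypothesis $\sum_i j_\lambda(g_i)i_\lambda(f_i)=0$ as
$$\sum_{i=1}^n \Psi(g_i)\circ f_i=0 \quad\text{in }\Hom_A(X,i_\mu j_\lambda(Y)),$$
where $\Psi(g):=i_\mu(j_\lambda(g))\circ\mu_{P(a)}\in\Hom_A(P(a),i_\mu j_\lambda(Y))$.

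The second step uses two structural facts. First, $i_\mu j_\lambda(Y)\cong P(a)^{\oplus N}$ for some $N$, as recorded in the discussion of the six functors. Second, the standing assumption that every non-trivial path in $Q_A$ from $a$ to $a$ lies in $I_A$ forces $\End_A(P(a))=e_aAe_a=K$. Hence $\Hom_A(P(a),P(a)^{\oplus N})=\bigoplus_{s=1}^N K\iota_s$, where $\iota_s$ is the inclusion of the $s$-th summand, and I may write $\Psi(g)=\sum_{s=1}^N\lambda_s(g)\iota_s$ with scalars $\lambda_s(g)\in K$. Projecting the displayed relation onto the $s$-th summand yields $\sum_{i=1}^n\lambda_s(g_i)f_i=0$ for each $s$; since $f_1,\dots,f_n$ are linearly independent in $\Hom_A(X,P(a))$, all the scalars $\lambda_s(g_i)$ vanish, that is, $\Psi(g_i)=0$ for every $i$.

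Finally, I would observe that $\Psi(g)=\alpha_{P(a),j_\lambda(Y)}(j_\lambda(g))$ is exactly the image of $j_\lambda(g)$ under the adjoint isomorphism (again using $i_\lambda(P(a))=j_\lambda(Q(b))$). As $\alpha_{P(a),j_\lambda(Y)}$ is bijective, $\Psi(g_i)=0$ gives $j_\lambda(g_i)=0$, and the faithfulness of $j_\lambda$ (noted after Lemma \ref{lemma adjoint functor singularity category}) then forces $g_i=0$ for all $i$. The main obstacle, and the only place where the hypotheses on $\Lambda$ genuinely enter, is the reduction to scalars in the second step: it is precisely the identity $\End_A(P(a))=K$, guaranteed by the assumption that the non-trivial $a$-to-$a$ paths are killed in $I_A$, that lets the linear independence of the $f_i$ be applied summand by summand. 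Without it, one could only control the components of $\Psi(g_i)$ up to endomorphisms of $P(a)$, and the conclusion would break down.
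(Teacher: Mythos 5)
Your proposal is correct and follows essentially the same route as the paper: transport the relation through the adjunction $(i_\lambda,i_\mu)$ to get $\sum_i \alpha_{P(a),j_\lambda(Y)}(j_\lambda(g_i))\circ f_i=0$ in $\Hom_A(X,i_\mu j_\lambda(Y))\cong\Hom_A(X,P(a)^{\oplus m})$, reduce to scalar columns, apply linear independence of the $f_i$, and conclude via bijectivity of the adjoint isomorphism and faithfulness of $j_\lambda$. The only cosmetic differences are that you justify the scalar entries directly from $\End_A(P(a))=e_aAe_a=K$ (the paper instead cites its displayed form (\ref{equation form of morphism 2}) of $i_\mu j_\lambda(g)$, which rests on the same standing assumption), and you treat the case $i_\mu j_\lambda(Y)=0$ uniformly rather than separately.
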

\begin{proof}
Recall that $i_\mu j_\lambda(Y)= P(a)^{\oplus m}$ for some integer $m$. If $m=0$, then $i_\mu j_\lambda(Y)=0$, so $\alpha_{P(a), j_\lambda(Y)}(j_\lambda(g_i))=0$ for any $1\leq i\leq n$, which implies that $j_\lambda(g_i)=0$, and then
$g_i=0$ for any $1\leq i\leq n$.

If $m>0$, then $\alpha_{X,j_\lambda(Y)} (j_\lambda(g_i)i_\lambda(f_i))=\alpha_{P(a),j_\lambda(Y)}(j_\lambda (g_i)) f_i$ for any $1\leq i\leq n$.
Since $i_\mu j_\lambda(Y)= P(a)^{\oplus m}$, from (\ref{equation form of morphism 2}) and the section map $\mu_{P(a)}$, we get that $\alpha_{P(a),j_\lambda(Y)}(j_\lambda (g_i)) f_i= i_\mu j_\lambda (g_i)\mu_{ P(a)} f_i$ is of form
$$X\xrightarrow{f_i} P(a) \xrightarrow{\left(\begin{array}{c} k_{i1}\\ \vdots\\ k_{im} \end{array} \right)  }P(a)^{\oplus m},$$
where $k_{ij}\in K$ for any $1\leq j\leq m$.

On the other hand, $\alpha_{X, j_\lambda(Y)}(\sum_{i=1}^n j_\lambda(g_i)i_\lambda(f_i))=\sum_{i=1}^n \alpha_{P(a),j_\lambda(Y)}(j_\lambda(g_i)) f_i$, which is equal to
$$ \sum_{i=1}^n\left(\begin{array}{c} k_{i1}\\ \vdots\\ k_{im} \end{array} \right) f_i=0.$$
Since $f_1,\dots,f_n$ are linearly independent, we get that $k_{ij}=0$ for all $1\leq i\leq n$, $1\leq j\leq m$. Therefore, $\alpha_{P(a),j_\lambda(Y)}(g_i)=0$ for any $1\leq i\leq n$, and then $g_i=0$ for any $1\leq i\leq n$.
\end{proof}

Similarly, we get the following lemma.

\begin{lemma}\label{lemma characterization of ideal 2}
Let $\Lambda=KQ_\Lambda/I_\Lambda$ be a simple gluing algebra of the two finite-dimensional bound quiver algebras $A=KQ_A/I_A$ and $B=KQ_B/I_B$ by identifying $a\in Q_A$ and $b\in Q_B$.
Let $X\in \Gp A$ and $Y\in \Gp B$ be indecomposable Gorenstein projective modules. For any morphisms $f_i:P(a)\rightarrow X$ and $g_i:Y\rightarrow Q(b)$ where $1\leq i\leq n$, if $g_1,\dots,g_n$ are linearly independent and $\sum_{i=1}^n i_\lambda(f_i)j_\lambda(g_i)=0$, then $f_i=0$ for any $1\leq i\leq n$.
\end{lemma}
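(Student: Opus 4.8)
The plan is to run the mirror of the argument proving Lemma~\ref{lemma characterization of ideal 1}, with the roles of the two adjoint pairs exchanged: since the composite $\sum_{i=1}^n i_\lambda(f_i)j_\lambda(g_i)$ is a morphism with \emph{source} $j_\lambda(Y)$, I would apply the adjunction isomorphism $\beta_{-,-}$ of the pair $(j_\lambda,j_\mu)$ rather than $\alpha_{-,-}$. First I recall the mirror of the identity used there, namely $j_\mu i_\lambda(X)=Q(b)^{\oplus m}$ with $m=\dim X_v$; this holds because, by the standing assumption that every nontrivial path $a\to a$ lies in $I_A$, the $B$-part of $i_\lambda(X)$ is freely generated at the glued vertex $v$, hence a direct sum of copies of $Q(b)$. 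If $m=0$, then $\Hom_A(P(a),X)\cong X_v=0$, so every $f_i$ is already zero and there is nothing to prove; thus I may assume $m>0$.

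For the main case I would apply $\beta_{Y,i_\lambda(X)}\colon \Hom_\Lambda(j_\lambda(Y),i_\lambda(X))\xrightarrow{\sim}\Hom_B(Y,j_\mu i_\lambda(X))$ to the relation. Naturality of $\beta$ in its first (source) variable, applied to the precomposition by $g_i\colon Y\to Q(b)$, gives
\[
\beta_{Y,i_\lambda(X)}\big(i_\lambda(f_i)\,j_\lambda(g_i)\big)=\beta_{Q(b),i_\lambda(X)}\big(i_\lambda(f_i)\big)\circ g_i ,
\]
exactly parallel to the step $\alpha_{X,j_\lambda(Y)}(j_\lambda(g_i)i_\lambda(f_i))=\alpha_{P(a),j_\lambda(Y)}(j_\lambda(g_i))f_i$ in Lemma~\ref{lemma characterization of ideal 1}. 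Next I would identify $\beta_{Q(b),i_\lambda(X)}(i_\lambda(f_i))=j_\mu i_\lambda(f_i)\circ\nu_{Q(b)}\in\Hom_B(Q(b),Q(b)^{\oplus m})$ with a column vector $(c_{i1},\dots,c_{im})^{\mathsf T}$ whose entries $c_{il}$ lie in $K$, using the description of $j_\mu i_\lambda(f_i)$ analogous to (\ref{equation form of morphism 2}); concretely the $c_{il}$ are the coordinates of the value $(f_i)_v=f_i(e_a)\in X_v$ of $f_i$ at the glued vertex. Summing over $i$ and using the hypothesis $\sum_i i_\lambda(f_i)j_\lambda(g_i)=0$ then yields $\sum_{i=1}^n c_{il}\,g_i=0$ in $\Hom_B(Y,Q(b))$ for each coordinate $l$.

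Finally, since $g_1,\dots,g_n$ are linearly independent over $K$, each such relation forces $c_{il}=0$ for all $i,l$; hence $\beta_{Q(b),i_\lambda(X)}(i_\lambda(f_i))=0$, so $i_\lambda(f_i)=0$ because $\beta$ is an isomorphism, and therefore $f_i=0$ because $i_\lambda$ is faithful (as noted after Lemma~\ref{lemma adjoint functor singularity category}). The step I expect to require the most care is the identification of $\beta_{Q(b),i_\lambda(X)}(i_\lambda(f_i))$ as a column vector with \emph{scalar} entries, which is the only place where the argument is not a purely formal mirror of Lemma~\ref{lemma characterization of ideal 1}: on the $A$-side one has $\End_A(P(a))=K$ by the standing assumption, whereas $\End_B(Q(b))=e_bBe_b$ may be strictly larger than $K$, so a priori the entries could fail to be scalars. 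The resolution is that $j_\mu i_\lambda(f_i)$ is determined by the image of the generator at $v$, namely the $K$-vector $(f_i)_v\in X_v$; under the identification $j_\mu i_\lambda(X)=Q(b)^{\oplus m}$ coming from a $K$-basis of $X_v$, the transition coefficients are exactly these scalars, so no element of $e_bBe_b$ intervenes and the linear-independence argument over $K$ goes through.
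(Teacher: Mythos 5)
Your proposal is correct and is essentially the proof the paper intends: the paper establishes this lemma only by the word ``Similarly,'' i.e.\ by mirroring the proof of Lemma~\ref{lemma characterization of ideal 1}, and your argument is precisely that mirror, replacing $\alpha$ by $\beta$ and using the scalar-matrix description of $j_\mu i_\lambda(-)$ (the counterpart of (\ref{equation form of morphism 2}) that the paper invokes with ``Similarly, we can describe $j_\mu i_\lambda(f)$''). Your extra care at the one genuinely asymmetric point --- that $\End_B(Q(b))=e_bBe_b$ need not be $K$, but the maps arising as $j_\mu i_\lambda(f_i)\nu_{Q(b)}$ are nonetheless scalar columns recording $(f_i)_v\in X_v$ --- is exactly the right resolution and is consistent with the paper's setup.
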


\begin{remark}\label{remark form of combination of morphism}
Let $X\in \Gp A$ and $Y\in \Gp B$ be indecomposable Gorenstein projective modules. Recall that $i_\mu i_\lambda(X)= X\oplus P(a)^{\oplus s}$ and $i_\mu j_\lambda(Y)=P(a)^{\oplus t}$ for some integers $s,t$. For any morphisms $f:P(a)^{\oplus n}\rightarrow X$ and $g: Y \rightarrow Q(b)^{\oplus n}$ with $\Im g\subseteq \rad Q(b)^{\oplus n}$, we have that
$i_\mu(i_\lambda(f)j_\lambda(g)):P(b)^{\oplus t}\rightarrow X\oplus P(a)^{\oplus s}$ is of form $\left(\begin{array}{c}0\\h\end{array}\right)$, where $h$ is a $s\times t$ matrix with entries in $K$.
\end{remark}
\begin{proof}
First, $i_\mu (U(v)^{\oplus n})= P(a)^{\oplus n}\oplus P(a)^{\oplus r}$ for some integer $r$.
From (\ref{equation form of morphism 1}) and (\ref{equation form of morphism 2}), we get that
$i_\mu j_\lambda(g):i_\mu j_\lambda Y=P(a)^{\oplus t}\rightarrow i_\mu (U(v)^{\oplus n})= P(a)^{\oplus n}\oplus P(a)^{\oplus r}$ is of form
$\left(\begin{array}{c}0\\h_2\end{array}\right)$ where $h_2$ is a $r\times t$ matrix with entries in $K$, since $\Im g\subseteq \rad Q(b)^{\oplus n}$, and
$i_\mu i_\lambda (f): i_\mu (U(v)^{\oplus n})= P(a)^{\oplus n}\oplus P(a)^{\oplus r}\rightarrow X\oplus P(a)^{\oplus s}$ is of form
$\left(\begin{array}{cc}f&\\&h_1\end{array}\right)$, where $h_1$ is a $s\times r$ matrix with entries in $K$.
Then $i_\mu(i_\lambda(f)j_\lambda(g))=\left(\begin{array}{c}0\\h_1h_2\end{array}\right)$ where $h_2h_1$ is a $s\times t$ matrix with entries in $K$.

\end{proof}

Now, we get the final main result in this paper.
\begin{theorem}\label{theorem Cohen-Macaulay Auslander algebras}
Let $\Lambda$ be a simple gluing algebra of the two finite-dimensional bound quiver algebras $A=KQ_A/I_A$ and $B=KQ_B/I_B$ by identifying $a\in Q_A$ and $b\in Q_B$. Then $\Lambda$ is CM-finite if and only if $A$ and $B$ are CM-finite. In this case, the Cohen-Macaulay Auslander algebra $\Aus(\Gp(\Lambda))$ is a simple gluing algebra of $\Aus(\Gp(A))$ and $\Aus(\Gp(B))$ by identifying the vertices corresponding to the indecomposable projective modules $P(a)$ and $Q(b)$.
\end{theorem}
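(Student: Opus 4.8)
The first assertion---that $\Lambda$ is CM-finite exactly when $A$ and $B$ are---is already contained in Corollary \ref{corollary CM-finite CM-free} and Proposition \ref{proposition simple gluing of AR-quivers}, so the plan is to assume all three algebras CM-finite and to identify the Cohen-Macaulay Auslander algebra $\Aus(\Gp(\Lambda))=(\End_\Lambda E)^{op}\cong KQ^{\Aus}_\Lambda/I^{\Aus}_\Lambda$ as a bound quiver algebra. Proposition \ref{proposition simple gluing of AR-quivers} already tells us that the underlying quiver $Q^{\Aus}_\Lambda$ is the simple gluing quiver of $Q^{\Aus}_A$ and $Q^{\Aus}_B$ obtained by identifying the vertices of $P(a)$ and $Q(b)$, the glued vertex being the one attached to the projective $U(v)=i_\lambda(P(a))=j_\lambda(Q(b))$. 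Hence the whole problem reduces to showing that, under this identification of quivers, the defining ideals satisfy $I^{\Aus}_\Lambda=\langle I^{\Aus}_A,I^{\Aus}_B\rangle$, where $I^{\Aus}_A$ and $I^{\Aus}_B$ present $\Aus(\Gp(A))$ and $\Aus(\Gp(B))$. Writing $\Phi\colon KQ^{\Aus}_\Lambda\twoheadrightarrow\Aus(\Gp(\Lambda))$ for the canonical surjection with kernel $I^{\Aus}_\Lambda$, I would prove the two inclusions separately.

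For $\langle I^{\Aus}_A,I^{\Aus}_B\rangle\subseteq I^{\Aus}_\Lambda$, I would use that $i_\lambda$ and $j_\lambda$ carry the irreducible morphisms of $\Gp A$ and $\Gp B$ to the arrows of $Q^{\Aus}_\Lambda$ (Lemmas \ref{lemma functors preserve for irreducible morphisms between projective modules} and \ref{lemma functors preserve for irreducible morphisms between non-projective modules}). A defining relation $\rho\in I^{\Aus}_A$ expresses the vanishing of a combination of composites of irreducible morphisms in $\End_A(E_A)$; applying the additive functor $i_\lambda$ and using $i_\lambda(f)\circ i_\lambda(f')=i_\lambda(f\circ f')$, the corresponding combination of composites in $\End_\Lambda(E)$ equals $i_\lambda$ of the original, hence is zero, so $\Phi(\rho)=0$. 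The same argument with $j_\lambda$ handles $I^{\Aus}_B$, giving the inclusion and a factorisation $\bar\Phi\colon\Gamma:=KQ^{\Aus}_\Lambda/\langle I^{\Aus}_A,I^{\Aus}_B\rangle\twoheadrightarrow\Aus(\Gp(\Lambda))$.

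The reverse inclusion, i.e. the injectivity of $\bar\Phi$, is the heart of the matter, and I would argue on each space $e_{[N]}\Gamma e_{[M]}$ for indecomposable Gorenstein projective $M,N$. The key is the adjunction splitting $\Hom_\Lambda(i_\lambda X,i_\lambda X')\cong i_\lambda\Hom_A(X,X')\oplus\Hom_A(X,P(a))^{\oplus s}$ coming from $i_\mu i_\lambda X'=X'\oplus P(a)^{\oplus s}$: the first summand collects the composites lying in the $A$-part (including those routed through the glued vertex but staying in $A$), while the complementary summand consists, by Corollary \ref{corollary factors through the gluing vertex}, of the composites that detour through $U(v)^{\oplus n}$ into the $B$-part. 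On the first summand injectivity is immediate from the faithfulness of $i_\lambda$ (noted after Lemma \ref{lemma adjoint functor singularity category}), so a vanishing combination of $A$-composites already lies in $I^{\Aus}_A$; the symmetric statement holds for $B$. For the detour part each composite splits as a path $[M]\to[U(v)]$ in one part followed by a path $[U(v)]\to[N]$ in the other, and here the technical input is Lemmas \ref{lemma characterization of ideal 1} and \ref{lemma characterization of ideal 2} together with Remark \ref{remark form of combination of morphism}: they assert precisely that the composition pairing through $U(v)$ is non-degenerate, namely that $\sum_i j_\lambda(g_i)i_\lambda(f_i)=0$ with the $f_i$ linearly independent forces all $g_i=0$ (and dually). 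Consequently any relation among crossing composites forces the vanishing of its $A$-side or $B$-side factors, so it is already a consequence of $I^{\Aus}_A$ and $I^{\Aus}_B$ and no genuinely new relation is created at the glued vertex.

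The main obstacle I anticipate is the bookkeeping for paths that leave the $A$-part, pass through the glued vertex into the $B$-part, and return: isolating a basis of $\Gamma$ in these mixed situations and verifying that $\bar\Phi$ sends it to a linearly independent family. This is exactly where the linear-independence hypotheses of Lemmas \ref{lemma characterization of ideal 1} and \ref{lemma characterization of ideal 2} and the explicit matrix shape $\left(\begin{smallmatrix}0\\ h\end{smallmatrix}\right)$ of Remark \ref{remark form of combination of morphism} must be combined, since they are what prevent the crossing composites from acquiring spurious cancellations. The finiteness condition built into the definition of simple gluing---that all nontrivial cycles at the glued vertex vanish on one of the two sides---is what keeps the crossing composites finite in number and makes the recursion terminate, ultimately yielding that $\bar\Phi$ is an isomorphism and hence that $\Aus(\Gp(\Lambda))$ is the simple gluing algebra of $\Aus(\Gp(A))$ and $\Aus(\Gp(B))$ along $P(a)$ and $Q(b)$.
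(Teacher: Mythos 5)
Your proposal is correct and follows essentially the same route as the paper: reduce via Proposition \ref{proposition simple gluing of AR-quivers} to the ideal identity $I^{\Aus}_\Lambda=\langle I^{\Aus}_A,I^{\Aus}_B\rangle$, obtain the easy inclusion from the functoriality of $i_\lambda,j_\lambda$ on irreducible morphisms, and obtain the hard inclusion from the non-degeneracy statements of Lemmas \ref{lemma characterization of ideal 1} and \ref{lemma characterization of ideal 2} together with Remark \ref{remark form of combination of morphism}, inducting on the number of passes through the glued vertex. The bookkeeping you flag as the main obstacle is precisely what the paper's proof executes in its four cases (a)--(d), using the adjunction isomorphisms $\alpha$ and $\beta$ to peel off one crossing of $U(v)^{\oplus n}$ at a time.
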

\begin{proof}
From Proposition \ref{proposition simple gluing of AR-quivers},
the Auslander-Reiten quiver $Q^{\Aus}_\Lambda$ of $\Lambda$ is a simple gluing quiver of the Auslander-Reiten quiver $Q^{\Aus}_A$ of $A$ and the Auslander-Reiten quiver $Q^{\Aus}_B$ of $B$ by identifying the vertices corresponding to the indecomposable projective modules $P(a)$ and $Q(b)$.

On the other hand, from the above, we also get that all the irreducible morphisms in $\Gp\Lambda$ are induced explicitly by the ones in $\Gp A$ and $\Gp B$. So for any irreducible morphism in $\Gp\Lambda$, it is either of form $i_\lambda(f)$ for some irreducible morphism $f$ in $\Gp A$ or of form $j_\lambda(g)$ for some irreducible morphism $g$ in $\Gp B$.
By viewing $Q^{\Aus}_A$ and $Q^{\Aus}_B$ to be subquivers of $Q^{\Aus}_\Lambda$, we identify $f$ with $i_\lambda(f)$, $g$ with $j_\lambda(g)$ for any irreducible morphisms $f$ in $\Gp A$, and $g$ in $\Gp B$. Let $\Aus(\Gproj A)$, $\Aus(\Gproj B)$ and $\Aus(\Gp \Lambda)$ be the opposite algebras of $KQ^{\Aus}_A/I^{\Aus}_A$, $KQ^{\Aus}_B/I^{\Aus}_B$ and $KQ^{\Aus}_\Lambda/I^{\Aus}_\Lambda$ respectively.
In this way, we claim that $I^{\Aus}_\Lambda= \langle I^{\Aus}_A, I^{\Aus}_B\rangle$.

First, it is easy to see that $I^{\Aus}_A \subseteq I^{\Aus}_\Lambda$, and $I^{\Aus}_B\subseteq I^{\Aus}_\Lambda$, which implies that
$\langle I^{\Aus}_A, I^{\Aus}_B\rangle\subseteq I^{\Aus}_\Lambda$.

Second, for any element $w\in KQ^{\Aus}_\Lambda$ which is in $I^{\Aus}_\Lambda$, without loss of generality, we assume that $w$ starts from an indecomposable Gorenstein projective $\Lambda$-module $Z_0$, and ends to an indecomposable Gorenstein projective $\Lambda$-module $Z_1$.
It is easy to see that $w$ is a linear combination of combinations of irreducible morphisms in $\Gproj \Lambda$ by viewing arrows to be irreducible morphisms. Recall that the Auslander-Reiten quiver $Q^{\Aus}_\Lambda$ is a simple gluing quiver of the Auslander-Reiten quivers $Q^{\Aus}_A$ and $Q^{\Aus}_B$ by identifying the vertices corresponding to the indecomposable projective modules $P(a)$ and $Q(b)$.
The proof can be broken into the following four cases.

Case (a) $Z_0=i_\lambda(X)$, $Z_1=j_\lambda(Y)$ for some $X\in\Ind \Gp A$, $Y\in\Ind\Gp B$, and $w$ is of form
\begin{eqnarray*}
&&i_\lambda(X)\xrightarrow{i_\lambda(f_0)} U(v)^{\oplus n_0} \xrightarrow{ j_\lambda(g_0)} U(v)^{\oplus m_0} \xrightarrow{i_\lambda(f_1)} U(v)^{\oplus n_1}
\xrightarrow{j_\lambda(g_1)} U(v)^{\oplus m_1}\\
&& \xrightarrow{i_\lambda(f_2)}\cdots \xrightarrow{j_\lambda(g_{t-1})} U(v)^{\oplus m_{t-1}} \xrightarrow{ i_\lambda(f_{t})} U(v)^{\oplus n_{t}}  \xrightarrow{j_\lambda(g_t)}j_\lambda(X_t)=j_\lambda(Y),
\end{eqnarray*}
with $\Im (f_i)\subseteq \rad P(a)^{\oplus n_i}$ for any $0\leq i\leq t$ and $\Im (g_j)\subseteq \rad Q(b)^{\oplus m_j}$ for any $0\leq j\leq t-1$.
We prove that $w\in \langle I_A^{\Aus},I_B^{\Aus}\rangle$ by induction on $t$.

If $t=0$, then it follows from Lemma \ref{lemma characterization of ideal 1}.
For $t>0$, let $f$ be of form
$$ X\xrightarrow{ \left(\begin{array}{c} f_{01}\\ \vdots\\ f_{0n_0} \end{array} \right) } P(a)^{\oplus n_0}.$$
If $f_{01}=0,\dots,f_{0n_0}=0$, then $f_0=0$, which implies that $w\in\langle I_A^{\Aus}\rangle \subseteq \langle I_A^{\Aus},I_B^{\Aus}\rangle $. Otherwise, without loss of generality, we assume that $f_{01},f_{02},\dots,f_{0n_0}$ are linearly independent.
Then $w$ is of form
$$i_\lambda(X)\xrightarrow{ \left(\begin{array}{c} i_\lambda(f_{01})\\ \vdots\\ i_\lambda(f_{0n_0}) \end{array} \right) } U(v)^{\oplus n_0}\xrightarrow{(l_1,l_2,\dots,l_{n_0}) } j_\lambda(Y),$$
where $(l_1,l_2,\dots,l_{n_0})=j_\lambda(g_{t})\cdots  i_\lambda(f_1)j_\lambda(g_{0})$.
Furthermore, $$\alpha_{X, j_\lambda(Y)}( \sum_{i=1}^{n_0}  l_ii_\lambda(f_{0i})) =\sum_{i=1}^{n_0}i_\mu (l_i) \mu_{P(a)^{\oplus n_0}}f_{0i}.$$
Since $\Im (g_j)\subseteq \rad Q(b)^{\oplus m_j}$ for any $0\leq j\leq t-1$, from Remark \ref{remark form of combination of morphism} and (\ref{equation form of morphism 2}),
we get that $i_\mu(l_i):P(a)^{\oplus s} \rightarrow P(a)^m$ is represented by a matrix with its entries in $K$, which implies that this property holds for $i_\mu(l_i) \mu_{P(a)^{\oplus n_0}}$ by the form of the section map $\mu_{P(a)^{\oplus n_0}}$.
Similar to the proof of Lemma \ref{lemma characterization of ideal 1}, we get that $i_\mu(l_i) \mu_{P(a)^{\oplus n_0}}=0$ for each $1\leq i\leq n_0$, and then $j_\lambda(g_{t})\cdots  i_\lambda(f_1)j_\lambda(g_{0})=0$.

In order to get that $j_\lambda(g_{t})\cdots  i_\lambda(f_1)j_\lambda(g_{0})\in\langle I_A^{\Aus}, I_B^{\Aus}\rangle$, without loss of generality, we assume $n_0=1$.
Then $g_0$ is of form
$$Q(b)\xrightarrow{\left(\begin{array}{c} g_{01}\\ \vdots\\ g_{0m_0} \end{array} \right)  } Q(b)^{\oplus m_0}.$$
Similar to the above, it is enough to prove that for the case when $g_{01},\dots, g_{0m_0}$ are linear independent.
Let $(p_1,\dots,p_{m_0}): U(v)^{\oplus m_0}\rightarrow j_\lambda(Y)$ be the morphism $j_\lambda(g_{t})\cdots  i_\lambda(f_1)$.
Then
\begin{eqnarray*}
&&\beta_{Q(b),j_\lambda(Y)}(j_\lambda(g_{t})\cdots i_\lambda(f_1)j_\lambda(g_{0}) )\\
&=&j_\mu (j_\lambda(g_{t})\cdots i_\lambda(f_1))\nu_{Q(b)^{\oplus m_0}}g_0=0.
\end{eqnarray*}
Similar to the proof of Remark \ref{remark form of combination of morphism}, since $j_\mu j_\lambda(Y)= Y\oplus Q(b)^{\oplus q}$ for some integer $q$, we get that $j_\mu (j_\lambda(g_{t})\cdots i_\lambda(f_1))\nu_{Q(b)^{\oplus m_0}}: Q(b)^{\oplus m_0}\rightarrow Y\oplus Q(b)^{\oplus q}$ is of form
$\left(\begin{array}{c}0\\ h' \end{array} \right)$ where $h'$ is a $q\times m_0$ matrix with entries in $K$.
Since $g_{01},\dots, g_{0m_0}$ are linear independent,
$$\beta_{Q(b),j_\lambda(Y)}(j_\lambda(g_{t})\cdots i_\lambda(f_1))=j_\mu (j_\lambda(g_{t})\cdots i_\lambda(f_1))\nu_{Q(b)^{\oplus m_0}}=0$$
 and then
$j_\lambda(g_{t})\cdots i_\lambda(f_1)=0$.
From the assumption of induction, we have
$$j_\lambda(g_{t})i_\lambda(f_{t})\cdots j_\lambda(g_{1}) i_\lambda(f_1)\in \langle I^{\Aus}_A,I^{\Aus}_B\rangle,$$
and then
$w\in  \langle I^{\Aus}_A,I^{\Aus}_B\rangle$.

Case (b) $Z_0=j_\lambda(Y)$ and $Z_1=i_\lambda(X)$ for some $X\in\Ind \Gp A$ and $Y\in\Ind\Gp B$.  $Z_0=i_\lambda(X)$, $Z_1=j_\lambda(Y)$ for some $X\in\Ind \Gp A$, $Y\in\Ind\Gp B$, and $w$ is of form
\begin{eqnarray*}
&&j_\lambda(Y)\xrightarrow{j_\lambda(f_0)} U(v)^{\oplus n_0} \xrightarrow{ i_\lambda(g_0)} U(v)^{\oplus m_0} \xrightarrow{j_\lambda(f_1)} U(v)^{\oplus n_1}
\xrightarrow{i_\lambda(g_1)} U(v)^{\oplus m_1}\\
&& \xrightarrow{j_\lambda(f_2)}\cdots \xrightarrow{i_\lambda(g_{t-1})} U(v)^{\oplus m_{t-1}} \xrightarrow{ j_\lambda(f_{t})} U(v)^{\oplus n_{t}}  \xrightarrow{i_\lambda(g_t)}i_\lambda(Y_t)=i_\lambda(X),
\end{eqnarray*}
with $\Im (f_i)\subseteq \rad P(a)^{\oplus n_i}$ for any $0\leq i\leq t$ and $\Im (g_j)\subseteq \rad Q(b)^{\oplus m_j}$ for any $0\leq j\leq t-1$.
It is similar to Case (a).

Case (c) $Z_0=i_\lambda(X)$, $Z_1=i_\lambda(Y)$ for some $X,Y\in\Ind \Gp A$, and $w$ is of form
\begin{eqnarray*}
&&i_\lambda(X)\xrightarrow{i_\lambda(f_0)} U(v)^{\oplus n_0} \xrightarrow{ j_\lambda(g_0)} U(v)^{\oplus m_0} \xrightarrow{i_\lambda(f_1)} U(v)^{\oplus n_1}
\xrightarrow{j_\lambda(g_1)} U(v)^{\oplus m_1}\\
&& \xrightarrow{i_\lambda(f_2)}\cdots \xrightarrow{j_\lambda(g_{t-1})} U(v)^{\oplus m_{t-1}} \xrightarrow{ i_\lambda(f_{t})} i_\lambda(X_t)=i_\lambda(Y),
\end{eqnarray*}
with $\Im (f_i)\subseteq \rad P(a)^{\oplus n_i}$ for any $0\leq i\leq t-1$ and $\Im (g_j)\subseteq \rad Q(b)^{\oplus m_j}$ for any $0\leq j\leq t-1$. If $t=0$, then $i_\lambda(f_0)=0$ and then $f_0=0$, which implies that
$w\in I_A^{\Aus}$. For $t>0$, we can prove it similar to Case (a).

Case (d) $Z_0=j_\lambda(X)$, $Z_1=j_\lambda(Y)$ for some $X,Y\in\Ind \Gp B$, and $w$ is of form
\begin{eqnarray*}
&&j_\lambda(X)\xrightarrow{j_\lambda(f_0)} U(v)^{\oplus n_0} \xrightarrow{ i_\lambda(g_0)} U(v)^{\oplus m_0} \xrightarrow{j_\lambda(f_1)} U(v)^{\oplus n_1}
\xrightarrow{i_\lambda(g_1)} U(v)^{\oplus m_1}\\
&& \xrightarrow{j_\lambda(f_2)}\cdots \xrightarrow{i_\lambda(g_{t-1})} U(v)^{\oplus m_{t-1}} \xrightarrow{ j_\lambda(f_{t})} j_\lambda(X_t)=j_\lambda(Y),
\end{eqnarray*}
with $\Im (f_i)\subseteq \rad P(a)^{\oplus n_i}$ for any $0\leq i\leq t-1$ and $\Im (g_j)\subseteq \rad Q(b)^{\oplus m_j}$ for any $0\leq j\leq t-1$.
It is similar to Case (c).

To sum up, $KQ^{\Aus}_\Lambda/I^{\Aus}$ is a simple gluing algebra of $KQ^{\Aus}_A/I^{\Aus}_A$ and $KQ^{Aus}_B/I^{\Aus}_B$ by identifying the vertices corresponding to the indecomposable projective modules $P(a)$ and $Q(b)$, and then
$\Aus(\Gp(\Lambda))$ is a simple gluing algebra of $\Aus(\Gp(A))$ and $\Aus(\Gp(B))$ by identifying the vertices corresponding to the indecomposable projective modules $P(a),Q(b)$.
\end{proof}

\begin{example}\label{example Auslander algebra}
Let $\Lambda=KQ/I$ be the algebra where $Q$ is the quiver
as the left quiver in Figure 4 shows,
and $I=\langle \alpha_{i+1}\alpha_i, \varepsilon^2 | i\in\Z/3\Z\rangle$. Let $A$ be the quotient algebra $\Lambda/\langle\varepsilon\rangle$, and $B=e_3\Lambda e_3$.
Then $\Lambda$ is a simple gluing algebra of $A$, $B$. Let $\Aus(\Gp(\Lambda))$ be the algebra corresponding to the bound quiver $(Q^{\Aus},I^{\Aus})$. Then $Q^{\Aus}$
is as the right quiver in Figure 4 shows, and $I^{Aus}=\langle \alpha_{i+1}^+\alpha_i^-, \varepsilon^+\varepsilon^- | i\in\Z/3\Z \rangle $. It is easy to see that
$\Aus(\Gp(\Lambda))$ is the simple gluing algebra of $\Aus(\Gp(A))$ and $\Aus(\Gp(B))$ by identifying the vertices corresponding to the indecomposable projective modules $P(3)$ and $Q(3)$.
\setlength{\unitlength}{0.7mm}
\begin{center}
\begin{picture}(80,40)

\put(0,20){\begin{picture}(50,10)
\put(0,-2){$3$}

\put(2,-2){\vector(1,-1){10}}

\put(12,-14){\vector(-1,0){22}}

\put(-10,-12){\vector(1,1){10}}
\put(13,-15){$2$}
\put(-13,-15){$1$}

\put(7,-6){\tiny$\alpha_2$}
\put(-10,-6){\tiny$\alpha_1$}

\put(-2,-13){\tiny$\alpha_3$}

\qbezier(-1,1)(-3,3)(-2,5.5)
\qbezier(-2,5.5)(1,9)(4,5.5)
\qbezier(4,5.5)(5,3)(3,1)
\put(3.1,1.4){\vector(-1,-1){0.3}}

\put(1,10){$\varepsilon$}

\end{picture}}

\setlength{\unitlength}{0.8mm}
\put(40,-5){\begin{picture}(100,40)

\put(19,11){\vector(-1,1){8}}

\put(34,10){\vector(-1,0){13}}

\put(44,19){\vector(-1,-1){8}}

\put(36,29){\vector(1,-1){8}}

\put(21,30){\vector(1,0){13}}

\put(11,21){\vector(1,1){8}}

\put(8,19){\small$6$}
\put(15,16){\tiny$\alpha_3^+$}
\put(19,8){\small$2$}
\put(34,8){\small$5$}
\put(25,11.5){\tiny$\alpha_2^-$}
\put(35,15){\tiny$\alpha_2^+$}
\put(45,18){\small$3$}

\put(18.9,29){\small$1$}

\put(34,29){\small$4$}
\put(35,22.5){\tiny$\alpha_1^-$}
\put(25,26.5){\tiny$\alpha_1^+$}
\put(15,23){\tiny$\alpha_3^-$}

\put(57,18){\vector(-1,0){9}}

\put(48,21){\vector(1,0){9}}

\put(58,18){\small $7$}
\put(52,22){\tiny$\varepsilon^+$}
\put(51,15){\tiny$\varepsilon^-$}

\end{picture}}

\put(-35,-5){Figure 4. The quiver of $\Lambda$ and its Cohen-Macaulay Auslander algebra.}
\end{picture}
\vspace{0.5cm}
\end{center}

\end{example}
\begin{example}
Following Example \ref{example not gentle algebra}, let $\Lambda=KQ/I$ be the algebra where $Q$ is the quiver $\xymatrix{1 \ar@/^/[r]^{\alpha_1} & 2 \ar@/^/[l]^{\beta_1} \ar@/^/[r]^{\alpha_2} & 3\ar@/^/[l]^{\beta_2}  &4,\ar[l]_\gamma   }$
and $I=\langle \beta_i\alpha_i,\alpha_i\beta_i, \beta_2\gamma |i=1,2 \rangle$. Let $\Aus(\Gp(\Lambda))$ be the algebra corresponding to the bound quiver $(Q^{\Aus},I^{\Aus})$. Then $Q^{\Aus}$
is as Figure 5 shows, and $I^{Aus}=\langle \alpha_{1}^+\beta_1^-,\beta_1^-\alpha_1^+,\alpha_2\beta_2,\beta_2\alpha_2, \beta_2\gamma\rangle $. Let $A$ and $B$ be as in Example \ref{example not gentle algebra}. It is easy to see that
$\Aus(\Gp(\Lambda))$ is the simple gluing algebra of $\Aus(\Gp(A))$ and $\Aus(\Gp(B))$ by identifying the vertices corresponding to the indecomposable projective modules $P(2)$ and $Q(2)$.
\setlength{\unitlength}{0.8mm}
\begin{center}
\begin{picture}(80,40)

\put(10,19.5){\small$1$}

\put(12,22){\vector(1,1){8}}

\put(20.5,31){\small$5$}

\put(23,29.5){\vector(1,-1){8}}

\put(32,19.5){\small$2$}

\put(31,19){\vector(-1,-1){8}}

\put(20.5,9){\small $6$}

\put(19.5,11){\vector(-1,1){8}}

\qbezier(35,22)(42.5,26)(50,22)
\put(50,22){\vector(3,-1){0.2}}

\qbezier(35,20)(42.5,16)(50,20)
\put(35,20){\vector(-3,1){0.2}}
\put(50.5,19.5){\small $3$}

\put(70,19.5){\small $4$}
\put(69,21){\vector(-1,0){16}}

\put(11.5,12){\tiny$\beta_1^-$}
\put(11.5,26){\tiny$\alpha_1^+$}
\put(26.5,12){\tiny$\beta_1^+$}
\put(26.5,26){\tiny$\alpha_1^-$}

\put(40,26){\tiny$\alpha_2$}
\put(40,14){\tiny$\beta_2$}
\put(60,22){\tiny$\gamma$}

\put(-5,-5){Figure 5. Cohen-Macaulay Auslander algebra of $\Lambda$.}
\end{picture}
\vspace{0.5cm}
\end{center}

\end{example}

\end{document}